\definecolor{smartgreen}{RGB}{191,242,204}
\newcommand{\C}{\mathbb{C}\xspace}  \renewcommand{\P}{\mathbb{P}\xspace} \newcommand{\N}{\mathbb{N}\xspace}  
\renewcommand{\O}{\mathcal{O}\xspace}
\renewcommand{\leq}{\leqslant} \renewcommand{\geq}{\geqslant}
\newcommand{\bydef}{\mathrel{\mathop{:}}=}
\newcommand{\defby}{=\mathrel{\mathop{:}}}
\renewcommand{\bar}[1]{\mkern .5mu\overline{\mkern-.5mu\mathstrut #1\mkern-.5mu}\mkern .5mu}
\DeclareMathOperator{\moinsun}{-1}
\newcommand{\abs}[1]{\lvert#1\rvert} 
\newcommand{\Abs}[1]{\lVert#1\rVert}
\newcommand{\xym}[3][1]{\left.\vcenter{\xy\xymatrix"m"@R=.5pt@C=.5pt@W=1em@H=#1em{#3}\POS"m1,1"."m#2"!C*\frm{(}*\frm{)}\endxy}\right.}
\newcommand{\ad}{\mathbin{\ooalign{%
  \relax\cr
  \hidewidth$\cdot$\hidewidth\cr
  \noalign{\vskip3pt}
  \kern1pt{\tiny\upshape ad}\kern1pt\cr
  \noalign{\vskip-3pt}}%
}}
\newcommand{\vf}[3][z]{\partial\raisebox{-1.3pt}{\scalebox{.8}{$#1_{#2}^{#3}$}}}
\def\mathbi#1{\textbf{\textit #1}}
\newtheorem*{THM}{Main Theorem}
\newtheorem{lemma}{Lemma}
\newtheorem{corollary}{Corollary}
\newtheorem{proposition}{Proposition}
\theoremstyle{definition}
\newtheorem{definition}{Definition}
\begin{document}
\title{Slanted Vector Fields for Jet Spaces}
\author{Lionel Darondeau}
\address{Lionel Darondeau\\Laboratoire de Mathématiques d'Orsay\\Université Paris-Sud (France).}
\email{lionel.darondeau@normalesup.org}
\date{26th February, 2015}

\begin{abstract}
  Low pole order frames of slanted vector fields are constructed on the space of vertical \(k\)-jets of the universal family of complete intersections in \(\P^n\) and, adapting the arguments, low pole order frames of slanted vector fields are also constructed on the space of vertical logarithmic \(k\)-jets along the universal family of projective hypersurfaces in \(\P^n\) with several irreducible smooth components.

  Both the pole order (here \(=5k-2\)) and the determination of the locus where the global generation statement fails are improved compared to the literature (previously \(=k^{2}+2k\)), thanks to three new ingredients:
  we reformulate the problem in terms of some adjoint action, we introduce a new formalism of \textsl{geometric jet coordinates}, and then we construct what we call \textsl{building-block vector fields}, making the problem for arbitrary jet order \(k\geq 1\) into a very analog of the much easier case where \(k=0\), \textit{i.e.} where no jet coordinates are needed.

  \medskip

  \noindent\textit{Keywords:}
  {Slanted vector fields, geometric jet coordinates, logarithmic jets, variational method of Voisin-Siu, hyperbolicity, building-block vector fields.}

\end{abstract}
\subjclass{32Q45,14J70,15A03}
\maketitle

\section*{Introduction}
\label{sec:intro}
The formalism of \textsl{jets} is a coordinate-free description of the differential equations that holomorphic curves may satisfy. For a map \(f\colon\C\to X\), valued in a complex projective manifold \(X\), the \(k\)-jet map \(f_{[k]}\colon\C\to J_{k}X\) valued in the \(k\)-jet bundle \(J_{k}X\) corresponds to the truncated Taylor expansion of \(f\) at order \(k\) in some local coordinate system. In \(J_{k}X\), each \textsl{jet-coordinate} \(f_{i}^{(p)}\) shall be considered as an independent coordinate, whence each algebraic differential equation (with holomorphic coefficients) of order \(k\) shall be thought of as a polynomial equation in \(J_{k}X\):
\[
  P(f',f'',\dotsc,f^{(k)})
  \equiv
  0. 
\]
Similarly, if \(D\subset X\) is a normal crossings divisor, the subsheaf \(J_{k}X(-\log D)\subset J_{k}X\) of \textsl{logarithmic \(k\)-jets} on \(X\) along \(D\) can be defined by considering the logarithmic derivatives in the direction of \(D\) (see below).

\subsubsection{Schwarz lemma.}
A (logarithmic) \textsl{\(k\)-jet differential} is locally a polynomial in the (logarithmic) jet-coordinates \(f_{i}^{(p)}\) having constant homogeneous weight, when the weight of \(f_{i}^{(p)}\) is the number of ``primes'' \(p\).
The jet differentials enjoy the following \textsl{fundamental vanishing theorem} (\cite{MR609557,MR1492539,Siu1996}).

\smallskip

\begin{itshape}
If \(\omega\) is a holomorphic jet differential on \(X\) with logarithmic poles along \(D\), that vanishes on an ample divisor,
and if \(f\) is a nonconstant holomorphic map \(\C\to X\setminus D\), then the pullback \(f^{\star}\omega=P(f',\dotsc,f^{(k)})\) vanishes identically on \(\C\).
\end{itshape}

\smallskip

When the canonical divisor \(K_{X}+D\) is big, an interesting question, motivated by the longstanding Green-Griffiths conjecture (\cite{MR609557}), is the algebraic degeneracy of such holomorphic maps \(\C\to X\setminus D\). 
Starting with a lot of differential equations as above, the overall idea is to decrease the degree of the differential equations by elimination of the jet coordinates \(f_{i}^{(p)}\) until obtaining a differential equation of degree \(0\), that is an algebraic equation satisfied by every nonconstant entire curve \(f\colon\C\to X\setminus D\).
We will briefly recall the key points of this strategy, already implemented both in the compact setting (\(X\setminus D=X_{d}\subset\P^{n}\), \cite{MR2593279}) and in the logarithmic setting (\(X\setminus D=\P^{n}\setminus X_{d}\), \cite{arXiv:1402.1396}).
For more details, the reader is referred to the comprehensive recent article \cite{arXiv:1209.2723S} by Yum-Tong Siu. 

\subsubsection{Siu's strategy.}
The general idea is that the vector fields \(\mathsf{V}\in T_{J_{k}X}\) applied to \(\omega\) produce new differential equations. 
However, such equations do not necessarily enjoy the fundamental vanishing theorem, since 
if the pole order of a vector field \(\mathsf{V}\) is bigger than the vanishing order of \(\omega\), then the hypotheses of the theorem are not satisfied by \(\mathsf{V}\cdot\omega\) anymore!
It is thus crucial to control the pole order of these vector fields.

On a regular hypersurface of high degree, there cannot be sufficiently many nonzero meromorphic vector fields having low pole order, but according to a strategy due to Voisin and Siu, in order to get a lot of low pole order vector fields, one can use the positivity of the moduli space\(\abs{\mathcal{O}(d)}\) of all degree \(d\) hypersurfaces in \(\P^{n}\):
\[
  \abs{\mathcal{O}(d)}
  \bydef
  \P\Big(H^{0}\big(\P^{n},\mathcal{O}_{\P^{n}}(d)\big)\Big)
  =
  \P\Big\{\sum_{\abs{\alpha}=d} A_{\alpha}Z^{\alpha}\colon A_{\alpha}\in\C\Big\}
  =
  \P^{n_{d}},
\]
where for \(\alpha\in\N^{n+1}\), \(\abs{\alpha}\bydef \alpha_{0}+\dotsb+\alpha_{n}\) and \(Z^{\alpha}\bydef Z_{0}^{\alpha_{0}}\dotsm Z_{n}^{\alpha_{n}}\) and where \(n_{d}\bydef\binom{n+d}{d}-1\).

In what will be called the \textsl{compact case}, we will consider the universal family 
of complete intersections of multi-degree \(d_{1},\dotsc,d_{c}\):
\[
  \mathcal{Y}_{d_{1},\dotsc,d_{c}}
  \bydef
  \Big\{
    \sum_{\abs{\alpha}=d_{1}} A_{\alpha}^{1}\,Z^{\alpha}
    =
    \dotsb
    =
    \sum_{\abs{\alpha}=d_{c}} A_{\alpha}^{c}\,Z^{\alpha}
    =
    0
  \Big\}
  \subset
  \P^{n}
  \times 
  \abs{\O(d_{1})}\times\dotsb\times\abs{\O(d_{c})},
\] 
(where of course the coefficients \(A_{\alpha}^{j}\) are the homogeneous coordinates on the corresponding \(\abs{\O(d_{j})}\)), together with the space of \(k\)-jets of this universal family \(\mathcal{Y}_{d_{1},\dotsc,d_{c}}\):
\[
  J_{k,d_{1},\dotsc,d_{c}}
  \bydef
  J_{k}\mathcal{Y}_{d_{1},\dotsc,d_{c}}.
\]

In what will be called the \textsl{logarithmic case}, we will consider the universal family 
of normal crossings divisors with \(c\) smooth irreducible components, of respective degrees \(d_{1}\), \dots, \(d_{c}\) in \(\P^{n}\):
\[
  \mathcal{H}_{d_{1},\dotsc,d_{c}}
  \bydef
  \Big\{
    \big(\sum_{\abs{\alpha}=d_{1}} A_{\alpha}^{1}\,Z^{\alpha}\big)
    \dotsm
    \big(\sum_{\abs{\alpha}=d_{c}} A_{\alpha}^{c}\,Z^{\alpha}\big)
    =
    0
  \Big\}
  \subset 
  \P^{n}
  \times 
  \abs{\O(d_{1})}\times\dotsb\times\abs{\O(d_{c})},
\]
together with the space of logarithmic \(k\)-jets along this universal family \(\mathcal{H}_{d_{1},\dotsc,d_{c}}\):
\[
  \bar{J}_{k,d_{1},\dotsc,d_{c}}
  \bydef
  J_{k}(\P^{n}\times\abs{\O(d_{1})}\times\dotsb\times\abs{\O(d_{c})})(-\log\mathcal{H}_{d_{1},\dotsc,d_{c}}).
\]

In both cases, let \(\eta\colon f_{[k]}\mapsto f(0)\) denote the evaluation of the jets and, for shortness, let us denote the space parametrizing the considered universal families by:
\[
  S_{d_{1},\dotsc,d_{c}}
  \bydef
  \abs{\O(d_{1})}\times\dotsb\times\abs{\O(d_{c})}
  =
  \P^{n_{d_{1}}}\times\dotsb\times\P^{n_{d_{c}}}.
\]

The space  of \textsl{vertical \(k\)-jets} is the subspace 
\(J_{k,d_{1},\dotsc,d_{c}}^{\mathit{vert}}\subset J_{k,d_{1},\dotsc,d_{c}}\)
consisting of jets tangent to the fibers of the second projection 
\[
  \mathrm{pr_{2}}
  \colon
  \P^{n}\times S_{d_{1},\dotsc,d_{c}}\to S_{d_{1},\dotsc,d_{c}}.
\]
Similarly, the space  of \textsl{vertical logarithmic \(k\)-jets} is the subspace 
\(\bar{J}_{k,d_{1},\dotsc,d_{c}}^{\mathit{vert}}\subset \bar{J}_{k,d_{1},\dotsc,d_{c}}\)
consisting of logarithmic jets tangent to the fibers of the second projection \(\mathrm{pr_{2}}\).
These jets are introduced in order to use the Schwarz lemma fiberwise (see \cite{MR2593279,arXiv:1402.1396}).

\begin{THM}
  Suppose that the order \(k\) of the jets is smaller than the degrees \(d_{1},\dotsc,d_{c}\), then
  \begin{itemize}
    \setlength\itemsep{1em}
    \item{\normalfont(\textsl{Compact} case)}
      The twisted holomorphic tangent bundle to vertical \(k\)-jets of the universal family of complete intersections of multi-degree \(d_{1},\dotsc,d_{c}\):
      \[
        T_{J_{k,d_{1},\dotsc,d_{c}}^{\mathit{vert}}}
        \otimes
        \eta^{\star}\Bigl(
        \O_{\P^{n}}\bigl(5k-2\bigr)
        \otimes
        \O_{S_{d_{1},\dotsc,d_{c}}}(1,\dotsc,1)
        \Bigr)
      \]
      is generated by its holomorphic global sections at every point of the subspace of \(J_{k,d_{1},\dotsc,d_{c}}^{\mathit{vert}}\) made of \(k\)-jets of non stationary holomorphic curves \(\C\to\mathcal{Y}_{d_{1},\dotsc,d_{c}}\) tangent to the fibers of the second projection
      \(\P^{n}\times S_{d_{1},\dotsc,d_{c}} \to S_{d_{1},\dotsc,d_{c}}\).

    \item{\normalfont(\textsl{Logarithmic} case)}
      The twisted holomorphic tangent bundle to vertical logarithmic \(k\)-jets along the universal family of normal crossing divisors having irreducible smooth components of degrees \(d_{1},\dotsc,d_{c}\)
      \[
        T_{\bar{J}_{k,d_{1},\dotsc,d_{c}}^{\;\mathit{vert}}}
        \otimes
        \eta^{\star}\Bigl(
        \O_{\P^{n}}\bigl(5k-2\bigr)
        \otimes
        \O_{S_{d_{1},\dotsc,d_{c}}}(1,\dotsc,1)
        \Bigr)
      \]
      is generated by its holomorphic global sections at every point of the subspace of \(\bar{J}_{k,d_{1},\dotsc,d_{c}}^{\;\mathit{vert}}\) made of logarithmic \(k\)-jets of non stationary holomorphic curves 
      \[
      \C\to(\P^{n}\times S_{d_{1},\dotsc,d_{c}})\setminus\mathcal{H}_{d_{1},\dotsc,d_{c}}
      \]
      tangent to the fibers of the second projection
      \(\P^{n}\times S_{d_{1},\dotsc,d_{c}} \to S_{d_{1},\dotsc,d_{c}}\).
  \end{itemize}
\end{THM}

Actually, the extrinsic vector fields that generate the tangent bundle are not tangential to the space of \(k\)-jets of the vertical fiber at a generic point, what justifies to term them \textsl{slanted vector field} as Siu does.

When low pole order meromorphic slanted vector fields are used in order to eliminate the derivatives \(f',\dotsc,f^{(k)}\), following Siu's strategy, our main Theorem essentially yields that actually every single algebraic coefficient (depending only on \(f\)) of each algebraic differential equation \(P(f',\dotsc,f^{(k)})\equiv0\) has to vanish.
For a more detailed account, see \cite{arXiv:1209.2723S,MR2593279,arXiv:1402.1396},
as we will now focus on the proof of our statement.

\subsubsection{State of the art.}
The method of slanted vector fields has been introduced by Siu (\cite{MR2077584}), and is motivated by the work of Clemens-Ein-Voisin (\cite{MR875091,MR958594,MR1420353}) on rational curves.
It has been pushed further in dimension \(2\) by P\u{a}un (\cite{MR2372741}) for the compact case and by Rousseau (\cite{MR2552951}) for the logarithmic case, with several smooth components, and in dimension \(3\) by Rousseau, both for the compact case (\cite{MR2331545}) and for the logarithmic case (\cite{MR2383820}).
In the compact case, the technique has been generalized in any dimension by Merker (\cite{MR2543663}), with a substantial improvement of the determination of the locus where the global generation statement fails, leading to a proof of the \emph{strong} algebraic degeneracy of entire curves with values in a generic projective hypersurface of effective large degree (\cite{MR2593279}).
In the slightly different context of projective hypersurfaces in families, Mourougane (\cite{MR2911888}) has implemented the technique in any dimension and for any jet order.

\subsubsection{Organization of the paper.}
In \S1, we recall the formalism of jets and we introduce new \textsl{geometric} jet coordinates, together with the associated vector fields, that exist on the subspace of invertible jets. 
When all is said and done, it will appear that this formalism significantly simplifies the computations, because it allows to take advantage of the triangular properties of the iterations of the chain rule, which rule of course plays a central role when dealing with jet spaces.

In \S2, we describe a new general strategy to easily construct slanted vector fields on the subspace of invertible jets, following the formal differentiation presentation of Merker in \cite{MR2543663}, and we implement this strategy in the compact case, for \(c=1\).
In the directions tangent to the space spanned by the jet coordinates, starting with guiding examples,  we retrieve the results presented in the work of P\u{a}un (\cite{MR2372741}), Rousseau (\cite{MR2331545,MR2383820,MR2552951}) and Merker (\cite{MR2543663}) by considering the first order geometric jet coordinates, and then we go further by using the higher order jet coordinates. This allows to clarify the locus where the global generation fails, a detail that was only briefly treated in the previous works.
In the directions tangent to the space of parameters, the arguments introduced by Merker in \cite{MR2543663} could directly apply, and we merely reformulate them in terms of geometric jet coordinates, which will prove to be useful for computing more accurately the pole order of the meromorphic prolongations of the described vector fields.

In \S3, we prove the main theorem on global generation of the tangent bundle to \(k\)-jet spaces.
In the case of the universal hypersurface, considered in the previous section, we compute the pole order of the frame of slanted vector fields that we have constructed. Then we adapt the result to the case of general complete intersections, with \(c\geq1\). After that, we also adapt the result to the logarithmic case.
For this, we follow the strategy implemented by Rousseau in \cite{MR2552951,MR2383820} for complements of hypersurfaces in \(\P^{2}\) and \(\P^3\), by locally straightening out the universal family \(\mathcal{H}_{d_{1},\dotsc,d_{c}}\).

\section{Geometric Coordinates for Invertible Jets.}
\label{sec:logJetBdl}
So we will define the \textsl{geometric jet coordinates}, that will play a pivotal role in our simplifying strategy, and then study the associated vector fields.
Let us start with some standard material.

\subsubsection{Logarithmic jet bundles.}
\label{sse:jetMfld}
Given a point \(x\) in a \(n\)-dimensional complex manifold \(X\),
the \textsl{\(k\)-jet} of a germ of holomorphic map \(f\colon(\C,0)\to(X,x)\) is the equivalence class \(f_{[k]}\) of germs that osculate with \(f\) to order \(k\) at the origin of \(\C\). 
Fixing some local coordinate system \((z_{1},\dotsc,z_{n})\) over an open subset \(U\subset X\) around \(x\),
the space of \(k\)-jets \(J_{k}X\vert_{x}\) therefore identifies, by Taylor formula, to the vector space \(\C^{nk}\) using the map:
\begin{equation}
  \label{eq:trivialisation}
  f_{[k]}
  \mapsto
  \big(
  f_{1}',\dotsc, f_{n}',
  \dotsc,
  f_{1}^{(k)},\dotsc,f_{n}^{(k)}
  \big).
\end{equation}
Here and throughout the rest of this text, we will rather use the notation \(f^{(j)}\) for the \(j\)-th Taylor coefficient than for the \(j\)-th derivative, \textit{i.e.}:
\[
  f^{(j)}
  \bydef
  \frac{1}{j!}
  \frac{d^{j}f}{dt^{j}}.
\]
The collection of these vector spaces gives rise to a holomorphic fiber bundle \(J_{k}X\), which is called the \textsl{\(k\)-jet manifold} of \(X\).

To get a local trivialization of \(J_{k}X\) around a point \(x\in X\), one can of course use the map \eqref{eq:trivialisation}. A construction due to Noguchi~(\cite{MR859200}) allows more general ``derivatives'', that are potentially more adapted to the geometric situation that will be dealt with below. Here ``derivative'' means pullback of local meromorphic \(1\)-forms. 

Let thus \(\omega\in T_{U}^{\star}\) be a local meromorphic \(1\)-form over an open subset \(U\subset X\) and let \(f\colon\Omega\to U\) be a local holomorphic map over an open subset \(\Omega\subset\C\). This map \(f\) induces a meromorphic function \(A'\colon\Omega\to\C\) by the formula:
\[
  f^{\star}\omega\vert_{t}
  \defby
  A'(t)\,dt,
\]
where we equip the complex plane \(\C\) with the standard complex coordinate \(t\). The obtained map \(A'\) depends only on the \(1\)-jet of \(f\). More generally, the \(j\)-th derivative of \(A'\) (up to \(j=k-1\)) is well defined and depends only on the \((j+1)\)-jet of \(f\).
One gets a meromorphic map \(J_{k}X\vert_{U}\to\C^{k}\) associated to \(\omega\) by defining:
\begin{equation}
  \label{eq:noguchi}
  \widetilde{\omega}
  \colon
  f_{[k]}
  \mapsto
  \left(A',\frac{1}{2!}\frac{dA'}{dt},\dotsc,\frac{1}{k!}\frac{d^{k-1}A'}{dt^{k-1}}\right).
\end{equation}
Note that if \(\omega\) is taken holomorphic, then \(\widetilde{\omega}\) becomes also holomorphic.

In the simplest case where \(\omega=dz_{i}\) is locally exact, one gets the derivatives in the direction \(z_{i}\). In the basic particular case where \(\omega=dz_{i}/z_{i}\), one gets the logarithmic derivatives in the direction \(z_{i}\).

For any holomorphic frame \((\omega^{1},\dotsc,\omega^{n})\) of the holomorphic cotangent bundle \(T_{X}^{\star}\), the map \((\widetilde{\omega}^{1},\dotsc,\widetilde{\omega}^{n})\) is a trivialization of the fiber of \(J_{k}X\).

\medskip

Recall that a \textsl{normal crossing divisor} \(D\) is a reduced divisor that looks locally like a (possibly empty) union of coordinate hyperplanes: for any point \(x\in X\), there exists a neighbourhood \(U\) at \(x\) with a local holomorphic coordinate system \((z_{1},\dotsc,z_{n})\) such that \(U\cap D\) is the zero locus \(\{z_{1}\dotsm z_{\ell}=0\}\), for some integer \(\ell\leq n\) that depends on \(x\).
The \textsl{logarithmic cotangent sheaf} \(T_{X}^{\star}(\log D)\) is then defined as the locally free subsheaf of the sheaf of meromorphic \(1\)-forms on \(X\) whose stalk at any point \(x\) is defined by:
\[
  T_{X}^{\star}(\log D)\vert_{x}
  \bydef
  \sum_{i=1}^{\ell}
  \mathcal{O}_{X,x}\;\frac{dz_{i}}{z_{i}}
  +
  \sum_{i=\ell+1}^{n}
  \mathcal{O}_{X,x}\;{dz_{i}},
\]
for any \textsl{logarithmic coordinate system} \((z_{1},\dotsc,z_{n})\) along \(D\) at \(x\), such that \(U\cap D=\{z_{1}\dotsm z_{\ell}=0\}\).

Recall after \cite{MR859200} that a holomorphic section \(s\in H^{0}(U,J_{k}X)\) on an open subset \(U\subset X\) is said to be a \textsl{logarithmic jet field} along \(D\) of order \(k\) if
for all logarithmic \(1\)-forms \(\omega\) defined on an arbitrary open subset \(V\subset U\)
the map:
\[
  \widetilde{\omega}
  \circ
  s\vert_{V}
  \colon
  V
  \to
  \C^{k}
\]
is holomorphic. These sections define a subsheaf of the sheaf of sections of \(J_{k}X\) that is itself the sheaf of sections of a holomorphic affine bundle \(J_{k}(X,-\log D)\), called the \textsl{logarithmic jet bundle of order \(k\) along \(D\)}. 
For any holomorphic frame \((\omega^{1},\dotsc,\omega^{n})\) of the logarithmic cotangent bundle \(T_{X}^{\star}(\log D)\), the map \((\widetilde{\omega}^{1},\dotsc,\widetilde{\omega}^{n})\) is a trivialization of the fiber of \(J_{k}(X,-\log D)\). 

A detailed description of the properties of the bundle \(J_{k}(X,-\log D)\) can be found in the paper \cite{MR1824906} by Dethloff and Lu.

\subsubsection{Geometric jet coordinates (1).}
\label{sse:geometricJetCoordinates}
A jet field \(j\in J_{k}X\) is termed \textsl{singular} at a point \(x\in X\) if it is the lift of a stationary curve, \textit{i.e.} \(j=f_{[k]}\) with \(f_{1}'(x)=\dotsb=f_{n}'(x)=0\). The subset of singular jets will be denoted by \(J_{k}^{\mathit{sing}}X\).
Note that for a logarithmic pair \((X,D)\), the logarithmic jet bundle \(J_{k}(X,-\log D)\vert_{X\setminus D}\) and the holomorphic jet bundle \(J_{k}(X\setminus D)\) coincide on the open part \(X\setminus D\).
This observation allows to define singular logarithmic jet fields, as follows.
A logarithmic jet field is said \textsl{singular} if it is in the topological closure of the subset \(J_{k}^{\mathit{sing}}(X\setminus D)\) in \(J_{k}(X,-\log D)\).
A (logarithmic) jet field that is not singular is termed \textsl{invertible} (or \textsl{regular}).

For a short moment, we will work in the so-called \textsl{compact case}, where \(D\) is empty, because the modifications needed to treat the general case are straightforward. We fix a coordinate system \((z_{1},\dotsc,z_{n})\) on an open set of \(X\).
Let:
\[
  z_{i}^{(p)}\colon f_{[k]}\mapsto f_{i}^{(p)}
\]
denote the jet coordinates obtained for the holomorphic frame \(dz_{1},\dotsc,dz_{n}\) according to \eqref{eq:noguchi}.
Consider the invertible jets, for which at least one first derivative \(z_{i_{0}}'\) is not zero. 

Without loss of generality, assume that \(i_{0}=1\).
By the local inverse theorem, the map \(f_{1}\colon\C\to\C\) is locally invertible, and it is very natural to use it as the complex variable. 
To do so, let us first describe how jet variables behave under reparametrization at the source.

Let \(g\colon\C\to X\) be a (local) map of class \(\mathscr{C}^{k}\) and let \(h\colon\C\to\C\) be a (local) reparametrization of the source of class \(\mathscr{C}^{k}\). 
Recall that the \textsl{length} of a multi-index \(\mu\in\N^{k}\) is by definition the sum:
\[
  \abs{\mu}\bydef \mu_{1}+\mu_{2}+\dotsb+\mu_{k},
\]
and define its \textsl{weight} to be the weighted sum:
\[
  \Abs{\mu}\bydef \mu_{1}+2\mu_{2}+\dotsb+k\mu_{k}.
\]
Classically, for any integer \(p=0,1,\dotsc,k\), the \(p\)-th Taylor coefficient
\(g^{(p)}=\frac{1}{p!}\frac{d^{p}g}{dt^{p}}\) of the map \(g\) is 
related to those of the reparametrized map \(g\circ h^{\moinsun}\)
by the \textsl{Faà di Bruno formula}~(\cite[§3.4, p.137]{MR0460128}):
\begin{equation}
  \label{eq:faa1}
  g^{(p)}
  =
  \sum_{q\leq p}
  \mathbi{B}_{p,q}\big(h^{(1)},\dotsc,h^{(k)}\big)\,
  \left(g\circ h^{\moinsun}\right)^{(q)}
  \circ h,
\end{equation}
where the \(\mathbi{B}_{p,q}\big(h^{(1)},\dotsc,h^{(k)}\big)\), hereafter denoted by \(\mathbi{B}_{p,q}(h)\) for shortness, are the so-called \textsl{Bell polynomials}~(\cite[§3.3, p.133]{MR0460128}):
\begin{equation*}
  \label{eq:bell}
  \mathbi{B}_{p,q}(h)
  \bydef
  \sum_{\Abs{\mu}=p,\abs{\mu}=q}
  \frac{\abs{\mu}!}{\mu!}\,
  {h^{(1)}}^{\mu_{1}}\dotsm{h^{(k)}}^{\mu_{k}},
\end{equation*}
with
\(
\mu!
\bydef
\mu_{1}!\dotsm\mu_{k}!
\).

Since the weighted sum
\(
\Abs{\mu}
=
1\,\mu_{1}+
2\,\mu_{2}+
\dotsb+
k\,\mu_{k}
\) 
is clearly never less than the plain sum
\(
\abs{\mu}
=
1\,\mu_{1}+
1\,\mu_{2}+
\dotsb+
1\,\mu_{k},
\)
the equality case \(\Abs{\mu}=\abs{\mu}=p\) implying \(\mu=(p,0\dotsc,0)\),
it is immediate that \(\mathbi{B}_{p,q}=0\) for \(p<q\) and \(\mathbi{B}_{p,p}(h)=(h^{(1)})^{p}=(h')^{p}\) for \(p=q\). 

Adopting the following notation for the Taylor coefficients in the summand of \eqref{eq:faa1}:
\begin{equation*}
  \frac{1}{q!}
  \frac{d^{q} g}{dh^{q}}
  \bydef
  \left(g\circ h^{\moinsun}\right)^{(q)}
  \circ h,
\end{equation*}
we gather the formulas \eqref{eq:faa1} for \(p=1,\dotsc,k\) 
under the form of the following invertible matricial equation:
\begin{equation}
  \label{eq:faa.mat}
  \xym[.57]{4,1}{
    g^{(1)}\ar@{.}[ddd]\\
    \\
    \\
    g^{(k)}
  }
  =
  \xym[0]{5,4}
  {
    h'\ar@{.}[4,3]&0\ar@{.}[0,2]&&0\\
    &&&\\
    &&&\\
    &&&0\ar@{.}@{.}[-3,-2]\ar@{.}@{.}[-3,0]\\
    &\mathbi{B}_{p,q}(h)&&(h')^{k}\\
  }
  \xym[1.08]{3,1}{
    \frac{1}{1!}\frac{dg}{dh}\ar@{.}[dd]\\
    \\
    \frac{1}{k!}\frac{d^{k}g}{dh^{k}}
  }.
\end{equation}

\medskip

We now come back to the particular situation of the set \(\{z_{1}'\neq0\}\) and we introduce new jet coordinates as follows.
\begin{definition}
  For a \(k\)-jet of holomorphic curve \(f\colon(\C,0)\to(X,x)\) with \(f_{1}'(0)\neq0\), and for \(i\neq1\), the \textsl{geometric jet coordinates} of \(f\) in the \(z_{i}\)-direction at \(x\) are:
  \[
    \bigg(f_{i}^{[1]},f_{i}^{[2]},\dotsc,f_{i}^{[k]}\bigg)
    \bydef
    \left(
    \frac{df_{i}}{df_{1}},
    \frac{1}{2!}\frac{d^{2}f_{i}}{d{f_{1}}^{2}},
    \dotsc,
    \frac{1}{k!}\frac{d^{k}f_{i}}{d{f_{1}}^{k}}
    \right).
  \]
\end{definition}

Notice that the \(f_{i}^{[p]}\) are genuine Taylor coefficients, but we use square brackets so that the introduced geometric jet coordinates cannot be confused with the usual jet coordinates \(f_{i}^{(p)}\).

The geometric jet coordinates on \(\{z_{1}'\neq0\}\) in the \(z_{i}\)-direction are defined to be \(z_{i}^{[p]}\colon f_{[k]}\mapsto f_{i}^{[p]}\).
By lifting \eqref{eq:faa.mat}, with \(h=z_{1}\), at the level of \(k\)-jets, one obtains the following relations between standard jet coordinates and geometric jet coordinates.
\begin{equation}
  \label{eq:geom.jet.i}
  z_{i}^{(p)}
  =
  \sum_{p\geq q}
  \mathbi{B}_{p,q}(z_{1})\;
  z_{i}^{[q]}
  \qquad
  {\scriptstyle(i=2,\dotsc,n)}.
\end{equation}

In the \(z_{1}\)-direction, the same definition would produce ``\(z_{1}^{[1]}=1\)'' and then ``\(z_{1}^{[p]}=0\)'' for \(p\geq2\).
Thus in order to complete the jet coordinate system we rather use :
\[
  t^{[p]}\colon f_{[k]}\mapsto
  \frac{d^{p}t}{df_{1}^{p}}
  \bydef
  ({f_{1}}^{\moinsun})^{(p)}\circ f_{1},
\]
where of course \(t\) stands for the identity map \(t\mapsto t\). Stating that \(t^{(q)}=\delta_{1,q}\), one infers:
\[
  \delta_{1,p}
  =
  \sum_{p\geq q}
  \mathbi{B}_{p,q}(z_{1})\;
  t^{[q]}
  \qquad
  {\scriptstyle(i=2,\dotsc,n)}.
\]

To sum up, the two systems of coordinates are related by:
\begin{equation}
  \label{eq:geometricCoordinates}
  \xym{4,5}{
    1&z_{1}^{(1)}&z_{2}^{(1)}\ar@{.}[rr]&&z_{n}^{(1)}\\
    0\ar@{.}[dd]&z_{1}^{(2)}\ar@{.}[dd]&z_{2}^{(2)}\ar@{.}[rr]\ar@{.}[dd]&&z_{n}^{(2)}\ar@{.}[dd]\\
    &&&&\\
    0&z_{1}^{(k)}&z_{2}^{(k)}\ar@{.}[rr]&&z_{n}^{(k)}
  }
  =
  \mathbi{B}(z_{1})\;
  \xym{4,5}{
    t^{[1]}&1&z_{2}^{[1]}\ar@{.}[rr]&&z_{n}^{[1]}\\
    t^{[2]}\ar@{.}[dd]&0\ar@{.}[dd]&z_{2}^{[2]}\ar@{.}[rr]\ar@{.}[dd]&&z_{n}^{[2]}\ar@{.}[dd]\\
    &&&&\\
    t^{[k]}&0&z_{2}^{[k]}\ar@{.}[rr]&&z_{n}^{[k]}
  }.
\end{equation}

Reciprocally, it is a well-known fact that Bell arrays behave well with respect to composition; indeed  (\cite[§3.7, p145]{MR0460128}):
\[
\mathbi{B}(g_{1}\circ g_{2})
=
\mathbi{B}(g_{2})\;
\big(\mathbi{B}(g_{1})\circ g_{2}\big).
\]
By taking \(g_{1}=z_{1}\) and \(g_{2}={z_{1}}^{\moinsun}\), one obtains
\(
\mathbi{I}_{k}
=
\mathbi{B}(z_{1})\;
\big(\mathbi{B}({z_{1}}^{\moinsun})\circ z_{1}\big)
\),
so we define:
\begin{equation}
  \label{eq:Bt}
  \mathbi{B}_{p,q}[t]
  \bydef
  \mathbi{B}_{p,q}({z_{1}}^{\moinsun})\circ z_{1}
  =
  \sum_{\Abs{\mu}=p,\ \abs{\mu}=q}
  \frac{\abs{\mu}!}{\mu!}\,
  {t^{[1]}}^{\mu_{1}}
  \dotsm
  {t^{[k]}}^{\mu_{k}},
\end{equation}
and the two matrices \(\mathbi{B}(z_{1})\) and \(\mathbi{B}[t]\) are inverse of each other. 
Thus:
\begin{equation}
  \xym{4,5}{
    t^{[1]}&1&z_{2}^{[1]}\ar@{.}[rr]&&z_{n}^{[1]}\\
    t^{[2]}\ar@{.}[dd]&0\ar@{.}[dd]&z_{2}^{[2]}\ar@{.}[rr]\ar@{.}[dd]&&z_{n}^{[2]}\ar@{.}[dd]\\
    &&&&\\
    t^{[k]}&0&z_{2}^{[k]}\ar@{.}[rr]&&z_{n}^{[k]}
  }
  =
  \mathbi{B}[t]\;
  \xym{4,5}{
    1&z_{1}^{(1)}&z_{2}^{(1)}\ar@{.}[rr]&&z_{n}^{(1)}\\
    0\ar@{.}[dd]&z_{1}^{(2)}\ar@{.}[dd]&z_{2}^{(2)}\ar@{.}[rr]\ar@{.}[dd]&&z_{n}^{(2)}\ar@{.}[dd]\\
    &&&&\\
    0&z_{1}^{(k)}&z_{2}^{(k)}\ar@{.}[rr]&&z_{n}^{(k)}
  }.
\end{equation}

\medskip

\begin{slshape}
  To adapt the definitions to the logarithmic case, we can do all the same reasoning, provided
  \((z_{1},\dotsc,z_{n})\) is a logarithmic coordinate system along \(D=(z_{1}\dotsm z_{\ell}=0)\),
  by taking the jet-coordinate associated to the frame \(({dz_{1}}/{z_{1}},\dotsc,{dz_{\ell}}/{z_{\ell}},dz_{\ell+1},\dotsc,dz_{n})\), namely for \(p=1,\dotsc,k\): 
  \[
    \begin{cases}
      z_{i}^{(p)}\colon f_{[k]}\mapsto (\log f_{i})^{(p)}&\text{for }i=1,\dotsc,\ell,\\
      z_{i}^{(p)}\colon f_{[k]}\mapsto f_{i}^{(p)}&\text{for }i=\ell+1,\dotsc,n.
    \end{cases}
  \]
\end{slshape}

\subsubsection{Associated vector fields.}
Since \(\mathbi{B}(z_{1})=\mathbi{B}[t]^{\moinsun}\), the coefficients of \(\mathbi{B}(z_{1})\)
have expressions that depend only on \(t^{[1]},\dotsc,t^{[k]}\).
Consequently, \(z_{1}^{(1)},\dotsc,z_{1}^{(k)}\) depend only on \(t^{[1]},\dotsc,t^{[k]}\) because they are the coefficients of the first column of \(\mathbi{B}(z_{1})\), and for \(i=2,\dotsc,n\), the system \eqref{eq:geom.jet.i}:
\[
  z_{i}^{(p)}
  =
  \sum_{p\geq q}
  \mathbi{B}_{p,q}(z_{1})\;
  z_{i}^{[q]}
\]
yields that \(z_{i}^{(p)}\) depends only on \(t^{[1]},\dotsc,t^{[k]}\)
and \(z_{i}^{[1]},\dotsc,z_{i}^{[q]}\) for the single same index \(i\).

One infers the dual relations for the associated vector fields \(\vf{i}{[p]}\) and \(\vf{i}{(q)}\), by plain transposition:
\begin{equation}
  \vf{i}{[p]}
  =
  \sum_{q=p}^{k}
  \mathbi{B}_{q,p}(z_{1})\,
  \vf{i}{(q)},
\end{equation}
where
\(
\vf[\bullet]{}{}
=
\frac{\partial}{\partial\bullet}
\). 
The simplest instances of such vector fields are constructed for \(i=2,\dotsc,n\) in the case \(p=1\), where these vector fields
\begin{equation}
  \label{eq:vfpaun}
  \vf{i}{[1]}
  =
  \sum_{q=1}^{k}
  z_{1}^{(q)}
  \vf{i}{(q)}
\end{equation}
are the same as the vector field implicitly used in the matrix approach presented in the work of P\u{a}un (\cite{MR2372741}).

Because the Bell array \(\mathbi{B}(z_{1})\) is lower triangular with invertible diagonal entries \((z_{1}')^{q}\), one has immediately the following:
\begin{corollary}
  \label{cor:span.i}
  The vector fields \(\vf{i}{[q]}\) span the tangent vector space in the \(z_{i}\)-direction at points where \(z_{1}'\neq0\).
\end{corollary}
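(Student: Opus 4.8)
The plan is to read off the claim directly from the change-of-coordinates formula, so the argument is essentially linear algebra. First I would fix the index $i\in\{2,\dots,n\}$ and the open set $\{z_{1}'\neq0\}$, and recall from \eqref{eq:geom.jet.i} that
\[
  z_{i}^{(p)}
  =
  \sum_{q=1}^{p}
  \mathbi{B}_{p,q}(z_{1})\;
  z_{i}^{[q]}
  \qquad{\scriptstyle(p=1,\dots,k)},
\]
which expresses the $k$ standard coordinates $z_{i}^{(1)},\dots,z_{i}^{(k)}$ in the $z_{i}$-direction as the image of the $k$ geometric coordinates $z_{i}^{[1]},\dots,z_{i}^{[k]}$ under multiplication by the $k\times k$ Bell matrix $\mathbi{B}(z_{1})$, whose entries depend only on $z_{1}^{(1)},\dots,z_{1}^{(k)}$ (equivalently on $t^{[1]},\dots,t^{[k]}$), \emph{not} on the index $i$.

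Next I would invoke the already-established triangularity: as noted right after the Bell polynomials were introduced, $\mathbi{B}_{p,q}=0$ for $p<q$ and $\mathbi{B}_{p,p}(z_{1})=(z_{1}')^{p}$, so $\mathbi{B}(z_{1})$ is lower triangular with diagonal entries $(z_{1}')^{1},\dots,(z_{1}')^{k}$, all nonzero on $\{z_{1}'\neq0\}$; hence $\mathbi{B}(z_{1})$ is invertible there, with inverse $\mathbi{B}[t]$ as in \eqref{eq:Bt}. Dualizing (transposing) the above linear relation gives the vector-field identity already recorded in the excerpt,
\[
  \vf{i}{[p]}
  =
  \sum_{q=p}^{k}
  \mathbi{B}_{q,p}(z_{1})\,
  \vf{i}{(q)},
\]
i.e. the $k$-tuple $(\vf{i}{[1]},\dots,\vf{i}{[k]})$ is obtained from the $k$-tuple $(\vf{i}{(1)},\dots,\vf{i}{(k)})$ by the invertible matrix $\mathbi{B}(z_{1})^{\mathrm{t}}$. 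Since $(\vf{i}{(1)},\dots,\vf{i}{(k)})$ is by definition a basis of the tangent space in the $z_{i}$-direction (the span of $\partial/\partial z_{i}^{(1)},\dots,\partial/\partial z_{i}^{(k)}$), applying an invertible linear transformation again yields a basis; therefore $(\vf{i}{[1]},\dots,\vf{i}{[k]})$ spans that same subspace at every point of $\{z_{1}'\neq0\}$. This proves the corollary.

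There is no real obstacle here — the content was front-loaded into the discussion of the Bell polynomials and of \eqref{eq:geom.jet.i}, and the corollary is just the observation that an invertible triangular change of coordinates transports a frame to a frame. The only point needing a word of care is that the triangularity and invertibility must be asserted pointwise on $\{z_{1}'\neq0\}$, since the entries of $\mathbi{B}(z_{1})$ are functions on jet space rather than constants; but the diagonal entries $(z_{1}')^{q}$ are manifestly invertible precisely there, so the determinant $\prod_{q=1}^{k}(z_{1}')^{q}=(z_{1}')^{k(k+1)/2}$ is nowhere zero on that open set, and the conclusion follows uniformly.
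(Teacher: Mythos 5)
Your argument is correct and is essentially the paper's own: the corollary is stated there as an immediate consequence of the dual relation $\vf{i}{[p]}=\sum_{q=p}^{k}\mathbi{B}_{q,p}(z_{1})\,\vf{i}{(q)}$ together with the lower-triangularity of $\mathbi{B}(z_{1})$ and the invertibility of its diagonal entries $(z_{1}')^{q}$ on $\{z_{1}'\neq0\}$. Your additional remark that the invertibility must be checked pointwise is a correct (if minor) clarification of the same reasoning.
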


\smallskip

In the ``\(t\)-direction'' the vector fields have of course more involved expressions, as every jet coordinate \(z_{i}^{(p)}\) depends on the coefficients \(t^{[q]}\).
Nevertheless, some special linear combination of \(\vf[t]{}{[1]},\dotsc,\vf[t]{}{[k]}\) have simple expressions. 
A first example is the \textsl{Euler vector field}:
\[
  \mathsf{T}_{1}
  \bydef
  t^{[1]}\vf[t]{}{[1]}
  +
  \dotsb
  +
  t^{[k]}\vf[t]{}{[k]}
  =
  -
  \sum_{i=1,\dotsc,n}\;
  \sum_{p=1}^{k}
  p\;z_{i}^{(p)}\,\vf{i}{(p)}.
\]
More generally, we claim that:
\begin {equation}
  \label{eq:tgt.sym}
  \mathsf{T}_{\ell}
  \bydef
  \sum_{m=1}^{k}
  \mathbi{B}_{m,\ell}[t]\,
  \vf[t]{}{[m]}
  =
  -
  \sum_{i=1,\dotsc,n}\;
  \sum_{p=1}^{k-\ell+1}
  p\,z_{i}^{(p)}\,\vf{i}{(p+\ell-1)}.
\end {equation}
\begin{proof}
  Assume that \(\mathsf{V}\) is in the vector space spanned by \(\vf[t]{}{[1]},\dotsc,\vf[t]{}{[1]}\) then:
  \[
    \mathsf{V}
    \cdot
    \textrm{Mat}\big(z_{i}^{(p)}\big)
    =
    \mathsf{V}
    \cdot
    \left(
    \mathbi{B}(z_{1})\;
    \textrm{Mat}\big(z_{i}^{[p]}\big)
    \right)
    =
    \big(
    \mathsf{V}
    \cdot
    \mathbi{B}(z_{1})
    \big)\;
    \textrm{Mat}\big(z_{i}^{[p]}\big),
  \]
  where for shortness:
  \[
    \textrm{Mat}\left(z_{i}^{(p)}\right)
    \bydef
    \xym[0]{4,4}{
      z_{1}^{(1)}&z_{2}^{(1)}\ar@{.}[rr]&&z_{n}^{(1)}\\
      z_{1}^{(2)}\ar@{.}[dd]&z_{2}^{(2)}\ar@{.}[rr]\ar@{.}[dd]&&z_{n}^{(2)}\ar@{.}[dd]\\
      &&&\\
      z_{1}^{(k)}&z_{2}^{(k)}\ar@{.}[rr]&&z_{n}^{(k)}
    }
    \quad
    \text{and}
    \quad
    \textrm{Mat}\left(z_{i}^{[p]}\right)
    \bydef
    \xym[0]{4,4}{
      \rule{0pt}{11pt}1&z_{2}^{[1]}\ar@{.}[rr]&&z_{n}^{[1]}\\
      0\ar@{.}[dd]&z_{2}^{[2]}\ar@{.}[rr]\ar@{.}[dd]&&z_{n}^{[2]}\ar@{.}[dd]\\
      &&&\\
      0&z_{2}^{[k]}\ar@{.}[rr]&&z_{n}^{[k]}
    };
  \]
  the last equality holds by Leibniz rule, because the coefficient of \(\mathrm{Mat}\big(z_{i}^{[p]}\big)\) are independent of \(t^{[1]},\dotsc,t^{[k]}\).
  Now, because \(\mathbi{B}[t]\) and \(\mathbi{B}(z_{1})\) are inverse matrices, one has:
  \[
    \mathsf{V}
    \cdot
    \mathbi{B}(z_{1})
    =
    -
    \mathbi{B}[t]^{\moinsun}
    \big(
    \mathsf{V}
    \cdot
    \mathbi{B}[t]
    \big)
    \mathbi{B}(z_{1}),
  \]
  whence:
  \[
    \mathsf{V}
    \cdot
    \textrm{Mat}\big(z_{i}^{(p)}\big)
    =
    -
    \mathbi{B}[t]^{\moinsun}
    \big(
    \mathsf{V}
    \cdot
    \mathbi{B}[t]
    \big)
    \mathbi{B}(z_{1})\;
    \textrm{Mat}\big(z_{i}^{[p]}\big)
    =
    -
    \mathbi{B}[t]^{\moinsun}
    \big(
    \mathsf{V}
    \cdot
    \mathbi{B}[t]
    \big)\;
    \textrm{Mat}\big(z_{i}^{(p)}\big).
  \]

  It is in general rather difficult to compute 
  \(
  \mathsf{V}
  \cdot 
  \mathbi{B}[t]
  \),
  but in the particular cases that we consider using the very definition \eqref{eq:Bt}, one proves that:
  \[
    \sum_{m=1}^{k}
    \mathbi{B}_{m,\ell}[t]\,
    \vf[t]{}{[m]}
    \cdot
    \mathbi{B}_{p,q}[t]
    =
    \ell\;
    \mathbi{B}_{p,q+\ell-1}[t].
  \]
  which is equivalent to:
  \[
    \sum_{m=1}^{k}
    \mathbi{B}_{m,\ell}[t]\,
    \vf[t]{}{[m]}
    \cdot
    \mathbi{B}[t]
    =
    \mathbi{B}[t]
    \big(
    1\,\mathbi{e}_{\ell}^{1}+
    \dotsb+
    (1+k-\ell)\,\mathbi{e}_{1+k-\ell}^{k}
    \big),
  \]
  where the \(\mathbi{e}_{i}^{j}\) are the elements of the canonical basis of \(\mathrm{Mat}_{k,k}(\C)\).
  Thus:
  \[
    \sum_{m=1}^{k}
    \mathbi{B}_{m,\ell}[t]\,
    \vf[t]{}{[m]}
    \cdot
    \textrm{Mat}\big(z_{i}^{(p)}\big)
    =
    -
    \big(
    1\,\mathbi{e}_{\ell}^{1}+
    \dotsb+
    (1+k-\ell)\,\mathbi{e}_{1+k-\ell}^{k}
    \big)\;
    \textrm{Mat}\big(z_{i}^{(p)}\big),
  \]
  which finally yields the announced result:
  \[
    \sum_{m=1}^{k}
    \mathbi{B}_{m,\ell}[t]\,
    \vf[t]{}{[m]}
    =
    \sum_{i=1,\dotsc,n}\;
    \sum_{p=1}^{k}
    \sum_{m=1}^{k}
    \mathbi{B}_{m,\ell}[t]\,
    \vf[t]{}{[m]}
    \cdot
    z_{i}^{(p)}\vf{i}{(p)}
    =
    -
    \sum_{i=1,\dotsc,n}\;
    \sum_{p=1}^{k-\ell+1}
    p\,z_{i}^{(p)}\,\vf{i}{(p+\ell-1)}.
    \qedhere
  \]
\end{proof}

\begin{corollary}
  \label{cor:span.z}
  The vector fields \(\mathsf{T}_{1},\dotsc,\mathsf{T}_{k}\) and \(\vf{2}{[1]},\dotsc,\vf{2}{[k]}\), \dots, \(\vf{n}{[1]},\dotsc,\vf{n}{[k]}\) span the tangent vector space to \(J_{k}X\) at points where \(z_{1}'\neq0\).
\end{corollary}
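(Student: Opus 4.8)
The plan is to reduce this corollary, in exact parallel with Corollary~\ref{cor:span.i}, to the invertibility of a single triangular Bell array. First I would observe that the change of coordinates~\eqref{eq:geometricCoordinates} expresses the standard jet coordinates $z_i^{(p)}$ invertibly in terms of $t^{[1]},\dots,t^{[k]}$ and the $z_i^{[q]}$ --- the two Bell matrices $\mathbi{B}(z_1)$ and $\mathbi{B}[t]$ being mutually inverse, as noted right after~\eqref{eq:Bt} --- so that over the open set $\{z_1'\neq0\}$ the functions $t^{[1]},\dots,t^{[k]},z_2^{[1]},\dots,z_2^{[k]},\dots,z_n^{[1]},\dots,z_n^{[k]}$ form a system of fibre coordinates for the jet projection $\eta$. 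Consequently the $nk$ coordinate vector fields $\vf[t]{}{[1]},\dots,\vf[t]{}{[k]}$ and $\vf{i}{[p]}$ $(i=2,\dots,n,\ p=1,\dots,k)$ constitute, over $\{z_1'\neq0\}$, a holomorphic frame of the tangent bundle to $J_kX$ along the fibres of $\eta$. Since the $(n-1)k$ fields $\vf{i}{[p]}$ with $i\geq2$ already appear in the list, it only remains to check that $\mathsf{T}_1,\dots,\mathsf{T}_k$ span the complementary, $k$-dimensional ``$t$-direction'' $\mathrm{span}\big(\vf[t]{}{[1]},\dots,\vf[t]{}{[k]}\big)$.

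For this I would appeal to nothing more than the very definition~\eqref{eq:tgt.sym}, namely $\mathsf{T}_\ell=\sum_{m=1}^{k}\mathbi{B}_{m,\ell}[t]\,\vf[t]{}{[m]}$, which exhibits the $k$-tuple $(\mathsf{T}_1,\dots,\mathsf{T}_k)$ as the image of $\big(\vf[t]{}{[1]},\dots,\vf[t]{}{[k]}\big)$ under the $k\times k$ matrix $\mathbi{B}[t]$. Now $\mathbi{B}[t]$ is lower triangular, since $\mathbi{B}_{p,q}=0$ whenever $p<q$, and by~\eqref{eq:Bt} its diagonal entries are $\mathbi{B}_{p,p}[t]=(t^{[1]})^{p}$; as $t^{[1]}=(f_1^{\moinsun})'\circ f_1=1/f_1'=1/z_1'$ is nowhere zero on $\{z_1'\neq0\}$, the determinant $(t^{[1]})^{k(k+1)/2}$ of $\mathbi{B}[t]$ is invertible there. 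Hence $\mathsf{T}_1,\dots,\mathsf{T}_k$ span, at every point of $\{z_1'\neq0\}$, precisely the same subspace as $\vf[t]{}{[1]},\dots,\vf[t]{}{[k]}$.

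Combining the two steps gives the claim: the $nk$ vector fields $\mathsf{T}_1,\dots,\mathsf{T}_k,\vf{2}{[1]},\dots,\vf{n}{[k]}$ span the tangent vector space to $J_kX$ (along the fibres of $\eta$) at every jet with $z_1'\neq0$. I do not anticipate a genuine obstacle: the corollary is a soft consequence of the triangularity of the two mutually inverse Bell arrays, entirely in the spirit of Corollary~\ref{cor:span.i}. The one point deserving a word of care is that $\mathsf{T}_\ell$ is \emph{defined} as a combination of the $\vf[t]{}{[m]}$ alone, with no $\vf{i}{[p]}$-component entering, so that the span decouples cleanly into the ``$t$-block'' treated above and the ``$z_i$-blocks'' already handled by Corollary~\ref{cor:span.i}.
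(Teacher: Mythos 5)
Your proof is correct, and it takes a route that is genuinely different in mechanism from the paper's, although both are short triangular-linear-algebra arguments. The paper also invokes Corollary~\ref{cor:span.i} to dispose of the directions \(\vf{i}{(p)}\) with \(i\geq2\), but then it works entirely in the \emph{standard} jet coordinates: it uses the computed right-hand side of \eqref{eq:tgt.sym}, \(\mathsf{T}_{q}=-\sum_{i}\sum_{p}p\,z_{i}^{(p)}\vf{i}{(p+q-1)}\), isolates the coefficient \(-z_{1}'\) of \(\vf{1}{(q)}\) in \(\mathsf{T}_{q}\), and recovers \(\vf{1}{(k)},\dotsc,\vf{1}{(1)}\) by a descending induction on \(q\). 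You instead stay in the geometric frame and use only the \emph{defining} left-hand side \(\mathsf{T}_{\ell}=\sum_{m}\mathbi{B}_{m,\ell}[t]\,\vf[t]{}{[m]}\), together with the lower-triangularity of \(\mathbi{B}[t]\) and the nonvanishing of its diagonal entries \((t^{[1]})^{p}=(1/z_{1}')^{p}\) on \(\{z_{1}'\neq0\}\). What your approach buys is that you never need the identity \eqref{eq:tgt.sym} (whose proof occupies the preceding page), only the triangular invertibility of \(\mathbi{B}[t]\) already recorded after \eqref{eq:Bt}; the price is that you must make explicit the frame property of \(\bigl(\vf[t]{}{[1]},\dotsc,\vf[t]{}{[k]},\vf{2}{[1]},\dotsc,\vf{n}{[k]}\bigr)\), i.e.\ that \((t^{[\bullet]},z_{i}^{[\bullet]})\) is an honest fibre coordinate system over \(\{z_{1}'\neq0\}\) — which follows from \(\mathbi{B}(z_{1})\) and \(\mathbi{B}[t]\) being mutually inverse, and which the paper uses implicitly anyway when it writes \(\vf[t]{}{[m]}\). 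Your reading of ``tangent vector space to \(J_{k}X\)'' as the span of the \(nk\) jet directions (the fibre of \(\eta\)) is the intended one, consistent with the count of vector fields and with the paper's own proof.
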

\begin{proof}
  By corollary \ref{cor:span.i}, it suffices to prove that the vector fields \(\vf{1}{(1)},\dotsc,\vf{1}{(q)},\dotsc,\vf{1}{(k)}\) are spanned. This is an easy descending induction on \(q=k,\dotsc,1\), since by \eqref{eq:tgt.sym}:
  \[
    \vf{i}{(q)}
    =
    \frac{1}{q\,z_{1}'}
    \bigg(
    \mathsf{T}_{q}
    -
    \sum_{p=q+1}^{k}
    p\;z_{1}^{(1+p-q)}\,\vf{1}{(p)}
    -
    \sum_{i=2,\dotsc,n}
    \sum_{p=q}^{k}
    p\;z_{i}^{(1+p-q)}\,\vf{i}{(p)}
    \bigg).
    \qedhere
  \]
\end{proof}

\subsubsection{Geometric jet coordinates (2).}
To be fully efficient, we will also need an alternative description of the geometric jet coordinates, using differential operators, in the spirit of the presentation of Merker in \cite{MR2543663}.
The \textsl{formal differentiation of jets} \(\mathsf{D}_{t}\) is by definition the following vector field on \(J_{k}X\), thought of as a differential operator on \(k\)-jets, that mimics the differentiation with respect to the standard coordinate \(t\in\C\):
\begin{equation}
  \label{eq:formalDifferentiation}
  \mathsf{D}_{t}
  \bydef
  \sum_{i=1,\dotsc,n}
  \left(
  \sum_{p=0}^{k-1}
  (p+1)\;
  z_{i}^{(p+1)}\;
  \vf{i}{(p)}
  \right).
\end{equation}
Here, the coefficient \((p+1)\) appears only because we use the Taylor coefficients instead of the derivatives.
On the set \(\{z_{1}'\neq0\}\) it is natural to introduce the linear differential operator \(\mathsf{D}_{z_{1}}\), that mimics the total derivative with respect to \(z_{1}\).
These two linear differential operators should be related by the usual chain rule on \(\{z_{1}'\neq0\}\), that we take as a definition:
\begin{equation}
  \label{eq:chainrule}
  \mathsf{D}_{z_{1}}
  \bydef
  \frac{1}{z_{1}'}
  \mathsf{D}_{t}.
\end{equation}
More generally, we claim that this chain rule implies that
on  the set \(\{z_{1}'\neq0\}\), the powers of the differential operators \(\mathsf{D}_{t}\) and \(\mathsf{D}_{z_{1}}\) are related by the invertible triangular system:
\begin{equation}
  \label{cor:DtD1}
  \frac{\mathsf{D}_{t}^{p}}{p!}
  =
  \sum_{q=1}^{p}
  \mathbi{B}_{p,q}(z_{1})\;
  \frac{\mathsf{D}_{z_{1}}^{q}}{q!}
  \qquad
  {\scriptstyle(p=1,\dotsc,k)}.
\end{equation}
This is a purely combinatorial fact, that we will admit, because the proof is not very interesting for our problem.

Throughout what follows, by convention, \(\mathsf{D}_{t}(t)\bydef1\).
\begin{lemma}
  \label{lem:geometric.jets.Dz}
  One has as expected \(\mathsf{D}_{z_{1}}(z_{1})=1\) and for \(p=0,1,\dotsc,k\) one has also:
  \[
    t^{[p]}
    =
    \frac{1}{p!}\mathsf{D}_{z_{1}}^{p}(t),
    \qquad
    z_{i}^{[p]}
    =
    \frac{1}{p!}
    \mathsf{D}_{z_{1}}^{p}(z_{i})
    \quad{\scriptstyle(i=2,\dotsc,n)},
  \]
  where according to the above convention \(\mathsf{D}_{z_{1}}^{p}(t)=\mathsf{D}_{z_{1}}^{p-1}(1/z_{1}')\).
\end{lemma}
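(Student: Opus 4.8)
The plan is to prove the three assertions of Lemma~\ref{lem:geometric.jets.Dz} by exploiting the combinatorial relation \eqref{cor:DtD1} between powers of \(\mathsf{D}_{t}\) and \(\mathsf{D}_{z_{1}}\) together with the explicit change-of-coordinates formula \eqref{eq:geometricCoordinates}. First I would check the warm-up identity \(\mathsf{D}_{z_{1}}(z_{1})=1\): by the chain-rule definition \eqref{eq:chainrule}, \(\mathsf{D}_{z_{1}}(z_{1})=\frac{1}{z_{1}'}\mathsf{D}_{t}(z_{1})\), and from the definition \eqref{eq:formalDifferentiation} of \(\mathsf{D}_{t}\) one reads off \(\mathsf{D}_{t}(z_{1})=z_{1}^{(1)}=z_{1}'\), so the ratio is \(1\). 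The same computation gives \(\mathsf{D}_{t}(z_{i})=z_{i}'\) for every \(i\), and \(\mathsf{D}_{t}(t)=1\) by the stated convention.

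Next I would handle the formula for \(z_{i}^{[p]}\) (\(i=2,\dots,n\)). The idea is to apply \(\frac{1}{p!}\mathsf{D}_{t}^{p}\) to the coordinate function \(z_{i}\) and expand via \eqref{cor:DtD1}: since \(\mathsf{D}_{t}\) is the formal differentiation operator, iterating it \(p\) times on \(z_{i}\) produces exactly \(z_{i}^{(p)}\) — this is precisely the sense in which \(\mathsf{D}_{t}\) "mimics differentiation with respect to \(t\)", and it follows by an immediate induction from \eqref{eq:formalDifferentiation} that \(\frac{1}{p!}\mathsf{D}_{t}^{p}(z_{i}) = z_{i}^{(p)}\). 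Applying \eqref{cor:DtD1} to \(z_{i}\) then yields
\[
  z_{i}^{(p)}
  =
  \frac{1}{p!}\mathsf{D}_{t}^{p}(z_{i})
  =
  \sum_{q=1}^{p}\mathbi{B}_{p,q}(z_{1})\,\frac{1}{q!}\mathsf{D}_{z_{1}}^{q}(z_{i}).
\]
Comparing this with the relation \eqref{eq:geom.jet.i}, \(z_{i}^{(p)}=\sum_{p\geq q}\mathbi{B}_{p,q}(z_{1})\,z_{i}^{[q]}\), and invoking the invertibility of the lower-triangular Bell array \(\mathbi{B}(z_{1})\) (its diagonal entries are \((z_{1}')^{q}\neq0\)), I can identify the two expansions term by term and conclude \(z_{i}^{[q]}=\frac{1}{q!}\mathsf{D}_{z_{1}}^{q}(z_{i})\) for all \(q\). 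The case \(p=0\) is trivial (both sides equal \(z_{i}\)), and \(p=1\) reproduces \(z_{i}^{[1]}=\mathsf{D}_{z_{1}}(z_{i})=z_{i}'/z_{1}'=df_{i}/df_{1}\), matching the Definition.

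For the \(t^{[p]}\) formula the argument is the same, replacing the function \(z_{i}\) by the identity function \(t\): one has \(\frac{1}{p!}\mathsf{D}_{t}^{p}(t)=\delta_{1,p}\) (since \(\mathsf{D}_{t}(t)=1\) and then \(\mathsf{D}_{t}(1)=0\)), so \eqref{cor:DtD1} applied to \(t\) gives \(\delta_{1,p}=\sum_{q=1}^{p}\mathbi{B}_{p,q}(z_{1})\,\frac{1}{q!}\mathsf{D}_{z_{1}}^{q}(t)\), which is the very same triangular system satisfied by the \(t^{[q]}\) in the displayed identity just before \eqref{eq:geometricCoordinates}; inverting \(\mathbi{B}(z_{1})\) once more gives \(t^{[p]}=\frac{1}{p!}\mathsf{D}_{z_{1}}^{p}(t)\). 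Finally the parenthetical remark \(\mathsf{D}_{z_{1}}^{p}(t)=\mathsf{D}_{z_{1}}^{p-1}(1/z_{1}')\) is just \eqref{eq:chainrule} applied once, \(\mathsf{D}_{z_{1}}(t)=\frac{1}{z_{1}'}\mathsf{D}_{t}(t)=\frac{1}{z_{1}'}\). The main (and really only) subtlety I anticipate is bookkeeping: one must make sure that the two triangular systems — the one coming from \eqref{cor:DtD1} and the defining one \eqref{eq:geom.jet.i} (resp. its \(t\)-analogue) — have literally the same coefficient matrix \(\mathbi{B}(z_{1})\) and the same left-hand side, so that uniqueness of the solution forces the coordinate-by-coordinate identification; once \eqref{cor:DtD1} is granted (as the excerpt allows), everything else is a one-line comparison.
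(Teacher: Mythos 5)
Your proposal is correct and is essentially the paper's own argument: the paper packages the same three ingredients (the identification \(\frac{1}{p!}\mathsf{D}_{t}^{p}(z_{i})=z_{i}^{(p)}\), the combinatorial relation \eqref{cor:DtD1}, and the change of coordinates \eqref{eq:geometricCoordinates}) into a commuting square of matrix identities and then cancels the invertible Bell array \(\mathbi{B}(z_{1})\), which is exactly your term-by-term comparison of the two triangular systems. The only cosmetic difference is that the paper writes one matrix equation where you write the \(z_{i}\)- and \(t\)-columns separately.
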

\begin{proof}
  The result follows from the matricial equalities:
    \[
      \vcenter{
        \xy
        \xymatrix"Ma"@R=.5pt@C=.5pt@W=1em@H=1em{
          1&z_{1}^{(1)}&z_{2}^{(1)}\ar@{.}[rr]&&z_{n}^{(1)}\\
          0\ar@{.}[dd]&z_{1}^{(2)}\ar@{.}[dd]&z_{2}^{(2)}\ar@{.}[rr]\ar@{.}[dd]&&z_{n}^{(2)}\ar@{.}[dd]\\
          &&&&\\
          0&z_{1}^{(k)}&z_{2}^{(k)}\ar@{.}[rr]&&z_{n}^{(k)}
        }
        \POS"Ma1,1"."Ma4,5"!C*\frm{(}*\frm{)},
        \POS(55,0)
        \xymatrix"Mb"@R=.5pt@C=.5pt@W=1em@H=1em{
          \frac{\mathsf{D}_{t}^{1}(t)}{1!}&
          \frac{\mathsf{D}_{t}^{1}(z_{1})}{1!}&
          \frac{\mathsf{D}_{t}^{1}(z_{2})}{1!}\ar@{.}[rr]&
          &
          \frac{\mathsf{D}_{t}^{1}(z_{n})}{1!}\\
          \frac{\mathsf{D}_{t}^{2}(t)}{2!}\ar@{.}[dd]&
          \frac{\mathsf{D}_{t}^{2}(z_{1})}{2!}\ar@{.}[dd]&
          \frac{\mathsf{D}_{t}^{2}(z_{2})}{2!}\ar@{.}[dd]\ar@{.}[rr]&
          &
          \frac{\mathsf{D}_{t}^{2}(z_{n})}{2!}\ar@{.}[dd]\\
          \\
          \frac{\mathsf{D}_{t}^{k}(t)}{k!}&
          \frac{\mathsf{D}_{t}^{k}(z_{1})}{k!}&
          \frac{\mathsf{D}_{t}^{k}(z_{2})}{k!}\ar@{.}[rr]&
          &
          \frac{\mathsf{D}_{t}^{k}(z_{n})}{k!}
        }
        \POS"Mb1,1"."Mb4,5"!C*\frm{(}*\frm{)},
        +L\ar@{}|{\displaystyle=\qquad}"Ma1,1"."Ma4,5"!C+R,
        \POS(0,-35)
        \xymatrix"Mc"@R=.5pt@C=.5pt@W=1.25em@H=1em{
          t^{[1]}&1&z_{2}^{[1]}\ar@{.}[rr]&&z_{n}^{[1]}\\
          t^{[2]}\ar@{.}[dd]&0\ar@{.}[dd]&z_{2}^{[2]}\ar@{.}[rr]\ar@{.}[dd]&&z_{n}^{[2]}\ar@{.}[dd]\\
          &&&&\\
          t^{[k]}&0&z_{2}^{[k]}\ar@{.}[rr]&&z_{n}^{[k]}
        }
        \POS"Mc1,1"."Mc4,5"!C*\frm{(}*\frm{)},
        +L*++!R\txt{\(\mathbi{B}(z_{1})\)},
        \POS"Mc1,1"."Mc4,5"!C+U\ar@{}|{\rotatebox{-90}{$\displaystyle=$}}"Ma1,1"."Ma4,5"!C+D,
        \POS(55,-35)
        \xymatrix"Md"@R=.5pt@C=.5pt@W=1em@H=1em{
          \frac{\mathsf{D}_{z_{1}}^{1}(t)}{1!}&
          \frac{\mathsf{D}_{z_{1}}^{1}(z_{1})}{1!}&
          \frac{\mathsf{D}_{z_{1}}^{1}(z_{2})}{1!}\ar@{.}[rr]&
          &
          \frac{\mathsf{D}_{z_{1}}^{1}(z_{n})}{1!}\\
          \frac{\mathsf{D}_{z_{1}}^{2}(t)}{2!}\ar@{.}[dd]&
          \frac{\mathsf{D}_{z_{1}}^{2}(z_{1})}{2!}\ar@{.}[dd]&
          \frac{\mathsf{D}_{z_{1}}^{2}(z_{2})}{2!}\ar@{.}[dd]\ar@{.}[rr]&
          &
          \frac{\mathsf{D}_{z_{1}}^{2}(z_{n})}{2!}\ar@{.}[dd]\\
          \\
          \frac{\mathsf{D}_{z_{1}}^{k}(t)}{k!}&
          \frac{\mathsf{D}_{z_{1}}^{k}(z_{1})}{k!}&
          \frac{\mathsf{D}_{z_{1}}^{k}(z_{2})}{k!}\ar@{.}[rr]&
          &
          \frac{\mathsf{D}_{z_{1}}^{k}(z_{n})}{k!}
        }
        \POS"Md1,1"."Md4,5"!C*\frm{(}*\frm{)},+L*++!R\txt{\(\mathbi{B}(z_{1})\)},
        \ar@{}|{\displaystyle=\qquad}"Mc1,1"."Mc4,5"!C+R,
        \POS"Md1,1"."Md4,5"!C+U\ar@{}|{\rotatebox{-90}{$\displaystyle=$}}"Mb1,1"."Mb4,5"!C+D,
    \endxy},
  \]
the right column equality being obtained by applying the combinatorial relations provided by \eqref{cor:DtD1}. The invertibility of \(\mathbi{B}(z_{1})\) on \(\{z_{1}'\neq0\}\) allows to conclude.
\end{proof}

Besides the chain rule \eqref{eq:chainrule} that we have used, there is another natural definition for \(\mathsf{D}_{z_{1}}\). We use the convention  \(\vf[t]{}{}\bydef z_{1}'\vf{1}{}\). Then a natural generalization of the total differentiation in the new jet coordinate system is:
\[
  \left(
  \sum_{p=0}^{k-1}
  (p+1)\,
  t^{[p+1]}\;
  \vf[t]{}{[p]}
  \right)
  +
  \sum_{i=2,\dotsc,n}
  \left(
  \sum_{p=0}^{k-1}
  (p+1)\,
  z_{i}^{[p+1]}\;
  \vf{i}{[p]}
  \right).
\]

A computation shows that expressed in the standard jet coordinate system, this vector field coincides with \(\mathsf{D}_{z_{1}}\) on the vector space spanned by \(\{\vf{i}{(p)}\}_{p\leq k-1}\) \emph{but} has non zero terms in the vector vector space spanned by \(\{\vf{i}{(k)}\}_{i=1,\dotsc,n}\).
However, as long as we consider iterations \(\mathsf{D}_{z_{1}}^{p}\) acting on \(\C[z_{1},\dotsc,z_{n}]\), with \(p\leq k\), any of the two fields can be used for \(\mathsf{D}_{z_{1}}\) because it is never applied to \(k\)-th order terms.

Further observing that, by the above lemma, the formal derivatives \(\mathsf{D}_{z_{1}}\cdot z_{j}\), expressed in the geometric jet coordinate system, do not depend on the jet coordinates \(t^{[1]},\dotsc,t^{[k]}\), thus for any polynomial \(P\in\C[z_{1},\dotsc,z_{n}]\) nor do \(\mathsf{D}_{z_{1}}^{p}\cdot P\), we obtain the following (notice that \(t^{[1]}\vf[t]{}{}=\vf{1}{}\)).

\begin{lemma}
  While \(p\leq k\), the differential operator \(\mathsf{D}_{z_{1}}^{p}\) has the same action on polynomials \(P(z_{1},\dotsc,z_{n})\) as the differential operator:
  \[
    \left(
    \vf{1}{}
    +
    \sum_{i=2,\dotsc,n}
    \left(
    \sum_{p=0}^{k-1}
    (p+1)\,
    z_{i}^{[p+1]}\;
    \vf{i}{[p]}
    \right)
    \right)^{p}.
  \]
\end{lemma}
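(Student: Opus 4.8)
The plan is to compare three differential operators on the invertible part of \(J_{k}X\): the operator \(\mathsf{D}_{z_{1}}\); the operator appearing in the statement, which I will call \(\mathsf{F}\); and the ``natural generalization of the total differentiation'' displayed just above, which I will call \(\mathsf{E}\) (with the conventions \(\vf[t]{}{[0]}\bydef\vf[t]{}{}\) and \(\vf{i}{[0]}\bydef\vf{i}{}\)). The idea is that \(\mathsf{E}\) differs from \(\mathsf{F}\), and from \(\mathsf{D}_{z_{1}}\), only in coordinate directions that are invisible to the functions on which we actually iterate, so that a short induction then yields \(\mathsf{F}^{p}=\mathsf{D}_{z_{1}}^{p}\) on polynomials for \(p\leq k\).

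First I would record the two comparisons. Extracting from \(\mathsf{E}\) the \(p=0\) summand of its ``\(t\)-sum'', namely \(t^{[1]}\vf[t]{}{}=\vf{1}{}\) (as noted just before the statement), one gets
\[
  \mathsf{E}-\mathsf{F}
  =
  \sum_{p=1}^{k-1}(p+1)\,t^{[p+1]}\,\vf[t]{}{[p]},
\]
which is a combination of \(\vf[t]{}{[1]},\dotsc,\vf[t]{}{[k-1]}\) and hence annihilates every function that does not depend on \(t^{[1]},\dotsc,t^{[k-1]}\). On the other hand, \(\mathsf{D}_{z_{1}}=\frac{1}{z_{1}'}\mathsf{D}_{t}\) only involves the fields \(\vf{i}{(p)}\) with \(p\leq k-1\) by \eqref{eq:formalDifferentiation}, so the computation quoted just before the statement (that \(\mathsf{E}\) agrees with \(\mathsf{D}_{z_{1}}\) in the directions \(\vf{i}{(p)}\), \(p\leq k-1\), and differs from it only in the directions \(\vf{i}{(k)}\)) says exactly that \(\mathsf{E}-\mathsf{D}_{z_{1}}\) is a combination of \(\vf{1}{(k)},\dotsc,\vf{n}{(k)}\), and hence annihilates every function that does not involve the top-order jet coordinates \(z_{i}^{(k)}\).

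Then I would prove \(\mathsf{F}^{m}(P)=\mathsf{D}_{z_{1}}^{m}(P)\) for every \(P\in\C[z_{1},\dotsc,z_{n}]\) and every \(m\leq k\) by induction on \(m\), the case \(m=0\) being trivial. For the step with \(1\leq m\leq k\), put \(Q\bydef\mathsf{D}_{z_{1}}^{m-1}(P)\), which equals \(\mathsf{F}^{m-1}(P)\) by the induction hypothesis. I then need two features of \(Q\). Since \(\mathsf{D}_{z_{1}}=\frac{1}{z_{1}'}\mathsf{D}_{t}\) raises the jet order by at most one, \(Q\) involves only jet coordinates of order \(\leq m-1\leq k-1\), hence none of the \(z_{i}^{(k)}\). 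And, by Lemma~\ref{lem:geometric.jets.Dz}, one has \(\mathsf{D}_{z_{1}}(z_{i})=z_{i}^{[1]}\) for \(i=2,\dotsc,n\), \(\mathsf{D}_{z_{1}}(z_{1})=1\), and more generally \(\mathsf{D}_{z_{1}}(z_{i}^{[q]})=(q+1)\,z_{i}^{[q+1]}\) whenever \(q+1\leq k\); since \(\mathsf{D}_{z_{1}}\) is a derivation, it follows that, for \(j\leq k\), the function \(\mathsf{D}_{z_{1}}^{j}(P)\) is, in the geometric jet coordinate system, a polynomial in \(z_{1},\dotsc,z_{n}\) and the \(z_{i}^{[q]}\) (\(i=2,\dotsc,n\), \(q\leq j\)) alone, in particular independent of \(t^{[1]},\dotsc,t^{[k]}\), so \(Q\) does not depend on \(t^{[1]},\dotsc,t^{[k-1]}\). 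By the two comparisons above, \(Q\) therefore lies in the common kernel of \(\mathsf{E}-\mathsf{F}\) and of \(\mathsf{E}-\mathsf{D}_{z_{1}}\), whence \(\mathsf{F}(Q)=\mathsf{E}(Q)=\mathsf{D}_{z_{1}}(Q)\), i.e. \(\mathsf{F}^{m}(P)=\mathsf{D}_{z_{1}}^{m}(P)\), closing the induction.

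The main subtlety is not a computation but the order bookkeeping, and in particular the reason the identity is confined to \(p\leq k\): the two ``error terms'' separating \(\mathsf{F}\), \(\mathsf{E}\) and \(\mathsf{D}_{z_{1}}\) are harmless only because, as long as \(m\leq k\), the intermediate iterate \(\mathsf{D}_{z_{1}}^{m-1}(P)\) stays in the class of functions of jet order \(\leq k-1\) that are free of \(t^{[1]},\dotsc,t^{[k-1]}\), on which all three operators agree; for \(m=k+1\) the top-order correction term of \(\mathsf{E}\) — equivalently, the truncation built into the \(k\)-jet space — becomes visible and the identity breaks down. Everything else is a direct consequence of Lemma~\ref{lem:geometric.jets.Dz} and the remarks preceding the statement.
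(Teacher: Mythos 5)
Your proof is correct and follows essentially the same route as the paper, whose justification for this lemma consists precisely of your two observations: the auxiliary operator \(\mathsf{E}\) (the displayed ``natural generalization of the total differentiation'') differs from \(\mathsf{D}_{z_{1}}\) only in the top-order directions \(\vf{i}{(k)}\), which the iterates \(\mathsf{D}_{z_{1}}^{m}(P)\) with \(m\leq k-1\) never reach, and those iterates are independent of \(t^{[1]},\dotsc,t^{[k]}\), so the \(t\)-part of \(\mathsf{E}\) beyond \(t^{[1]}\vf[t]{}{}=\vf{1}{}\) may be dropped. You have merely organized the paper's terse remarks into an explicit induction, which is a faithful expansion rather than a different argument.
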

We will now take advantage of the very simple expression of the \(z_{1}\)-component of this vector field.

\section{Construction of Slanted Vector Fields}
\label{se:construction}
In this section, we work in the compact case, for \(c=1\).
\subsubsection{Vertical jets.}
Recall that the universal hypersurface is the subspace \(\mathcal{H}_{d}\subset \P^{n+1}\times \abs{\mathcal{O}(d)}\)
defined by:
\[
  \mathcal{H}_{d}
  \colon
  \sum_{\abs{\alpha}=d}
  A_{\alpha}\,Z^{\alpha}
  =
  0.
\]
\label{sse:verticalJets}
According to Siu, a \textsl{vertical \(k\)-jet} of the universal hypersurface \(\mathcal{H}_{d}\to \abs{\mathcal{O}(d)}\) is a \(k\)-jet of \(\mathcal{H}_{d}\) representable by some complex curve germ lying completely in some fiber \(H(A)\) over a certain point \(A\in \abs{\mathcal{O}(d)}\) of the parameter space.
Concretely, the formal differentiation presentation of Merker (\cite{MR2543663}) provides an efficient description of the subspace of vertical jets, recalled just below.

We restrict to the affine set \(\{Z_{0}\neq0\}\simeq\C^{n}\) 
equipped with the standard inhomogeneous coordinates
\(
(z_{1},\dotsc,z_{n})
\),
where as usual \(z_{j}=Z_{j}/Z_{0}\),
and for some \(\widehat{\alpha}\in\N^{n+1}\) with \(\abs{\widehat\alpha}=d\) and \(\widehat\alpha_{0}= 0\) (fixed later) we also restrict to the affine set \(\{A_{\widehat{\alpha}}\neq 0\}\simeq\C^{n_{d}}\) (here \(n_{d}=\binom{n+d}{n}-1\)),
equipped with the standard inhomogeneous coordinates:
\[
  a_{\alpha_{1},\dotsc,\alpha_{n}}
  \bydef
  A_{(d-\alpha_1-\dotsb-\alpha_n,\alpha_1,\dotsc,\alpha_n)}/A_{\widehat{\alpha}}
  \quad
  {\scriptstyle(\abs{\alpha}\leq d)}.
\] 
Notice that \(\alpha_{0}\) does not appear anymore in the indices of \(a_{\alpha}\).
For brevity, we will make the convention \(a_{\widehat{\alpha}}=a_{\widehat{\alpha}_{1},\dotsc,\widehat{\alpha}_{n}}=1\)
but keep in mind that \(a_{\widehat{\alpha}}\) is a constant; in particular there is no associated vector field \(\vf[a]{\widehat{\alpha}}{}\).

In these coordinate system, the restriction to \(U_{0}\bydef\{Z_{0}\neq0\}\cap\{A_{\widehat{\alpha}}\neq0\}\) of \(\mathcal{H}_{d}\) is the zero set:
\[
  \mathcal{Z}_{0}
  \colon
  \sum_{\abs{\alpha}\leq d}
  a_{\alpha}\,z_{1}^{\alpha_{1}}\dotsm z_{n}^{\alpha_{n}}
  =
  0.
\]

Considering the coefficients \(a_{\alpha}\) as independent variables, we work on the jet-space:
\[
  \C^{n_{d}}\times J_{k}(\C^{n})
  \simeq
  \C^{n_{d}}
  \times
  \C^{n}
  \times
  \C^{nk},
\]
equipped with \(a,z\) and the standard jet-coordinates \(z',\dotsc,z^{(k)}\). Then, for \(p=0,1,\dotsc,k\), let the expression \((\mathsf{D}_{t}^{p}\cdot P)\) stands for the polynomial in the variables \(a,z,z',\dotsc,z^{(k)}\) of the ambient jet-space
\(
\C^{n_{d}}
\times
\C^{n}
\times
\C^{nk}
\)
obtained by applying \(p\) times the formal differentiation 
\(
\mathsf{D}_{t}
\) 
to the defining equation \(P\) of \(\mathcal{Z}_{0}\).

The space \(J_{k}^{\mathit{vert}}(\mathcal{Z}_{0})\) of vertical \(k\)-jets consists of the \(k\)-jets of \(\mathcal{Z}_{0}\) satisfying the \(k+1\) equations:
\begin{equation}
  \label{eq:DP}
  0=
  P=
  \mathsf{D}_{t}\cdot P=
  \dotsb=
  \mathsf{D}_{t}^{k}\cdot P.
\end{equation}
Notice that these algebraic equations form a linearly free system of rank \((k+1)\), hence the algebraic subspace of logarithmic vertical jets \(J_{k}^{\mathit{vert}}(\mathcal{Z}_{0})\) is of pure codimension \((k+1)\) in the ambient space \(\C^{n_{d}}\times\C^{n\,(k+1)}\). 

In what follows, for brevity, \(J_{k}\) will stand for the ambient jet space \(\C^{n_{d}}\times J_{k}(\C^{n})\) and 
\(J_{k}^{\mathit{vert}}\) will stand for \(J_{k}^{\mathit{vert}}(\mathcal{Z}_{0})\).

\subsubsection{Algebraic equations of the tangent space.}
We will consider two types of vector fields on \(J_{k}^{\mathit{vert}}\):
\begin{itemize}
  \item 
    \textsl{Vertical vector fields} that are vector fields on \(J_{k}^{\mathit{vert}}\) tangential to the space of \(k\)-jets of the vertical fiber of \(J_{k}\to\C^{n_{d}}\).
  \item 
    \textsl{Slanted vector field} that are, according to Siu, vector fields on \(J_{k}^{\mathit{vert}}\) that are not tangential to the space of \(k\)-jets of the vertical fiber at a generic point of \(J_{k}^{\mathit{vert}}\).
\end{itemize}
We will show that there are very few vector fields of the first kind, what justifies to consider vector fields of the second kind.
Indeed, in order to prove global generation, the problem becomes the following: for every point \(p\in U_{0}\), construct enough slanted vector fields so that together with the vertical vector fields they form a (large) subspace of codimension \((k+1)\) in the vector space \(J_{k}^{\mathit{vert}}\rvert_{p}\).

\begin{center}
  \begin{tikzpicture}
    \draw[very thick] (-1,-1.2) rectangle (9,4.2);
    \coordinate (C) at (2,-.75);
    \draw (C) +(-2,0)--+(2,0);
    \fill[DarkOrange!70!black] (C) circle[radius=2pt] node[below=2,scale=.7]{$A$};
    \coordinate (H) at (-.25,.75);
    \draw (H)--+(4,0)--+(4.5,.5)--+(.5,.5)--cycle;
    \draw[DarkOrange!70!black,thin] (H) +(2,0) node[below,scale=.7]{$H(A)$}--+(2.5,.5);
    \coordinate (J) at (-.25,2); 
    \path (J)++(2.25,.75) node[coordinate](o){};
    \node at (o) [left,scale=.7]{$p$};
    \path (J)++(2,.4) node[coordinate](a){};
    \path (J)++(2.5,1.25) node[coordinate](b){};
    \path (J)++(0,.5) node[coordinate](a-){};
    \path (J)++(.5,1.1) node[coordinate](b-){};
    \path (J)++(4,.2) node[coordinate](a+){};
    \path (J)++(4.5,1) node[coordinate](b+){};
    \filldraw[DarkGreen,fill=smartgreen,fill opacity=.4] (a-) to[out=5,in=150] (a) to[out=-30,in=185] (a+) to[out=45,in=-160] (b+) to[out=180,in=0] (b) to[out=180,in=0] (b-) to[out=-135,in=60] (a-);
    \draw[DarkOrange!70!black] (a) to[out=20,in=-100] (o) to[out=80,in=-170] (b);
    \node (lab) at (a) [DarkOrange!70!black,below left=2,scale=.7]{$J_{k}H(A)$};
    \draw[thin,DarkOrange!70!black,dashed,->,>=stealth,shorten >=1pt] (lab.east)--(a);
    \path(o)+(-1.4,0.05) node[DarkGreen,scale=.8]{$J_{k}^{\mathit{vert}}$};
    \draw[ultra thin] (J)--+(4,0)--+(4.5,.5)--+(.5,.5)--cycle +(0,1)--+(4,1)--+(4.5,1.5)--+(.5,1.5)--cycle (J)--+(0,1) +(4,0)--+(4,1) +(4.5,.5)--+(4.5,1.5);
    \draw[DarkGreen,->](o)--+(20:.4);
    \draw[DarkGreen,->] (o)--+(-10:.45) node[thick,near end,right=1mm,scale=.5]{$\widetilde{\mathsf{V}}$};
    \draw[DarkGreen,->] (o)--+(-60:.38);
    \filldraw (o) circle[radius=1pt];
    \path (J)+(6,.5) node (1) {$J_{k}$};
    \path (H)+(6,.5) node (2) {$\mathscr{H}$} node[right=5mm,scale=.7]{$\subset\P^{n+1}\times\abs{\mathcal{O}(d)}$};
    \path (C)+(3.75,0) node (3) {$\lvert\mathcal{O}(d)\rvert$};
    \draw[->](1)--(2) node[midway,right,scale=.7]{$\pi_k$};
    \draw[->](2)--(3) node[midway,right,scale=.7]{$pr_{2}$};
    \path (1)+(0,1)node[DarkGreen](4){$J_{k}^{\mathit{vert}}$};
    \path (4)--(1) node[midway,sloped]{$\subset$}node[midway,right=2mm,scale=.7]{$\mathrm{codim}=k+1$};
  \end{tikzpicture}
\end{center}
We will decompose a general vector field \(\widetilde{\mathsf{V}}\) on \(J_{k}\), that we will assume tangent to \(J_{k}^{\mathit{vert}}\), into two parts:
one vertical part \(\mathsf{V}\) tangent to the jet space \(J_{k}H(A)\) at points \(p\) with \(\pi_{k}\circ \mathrm{pr}_{2}(p)=A\) (which is the situation of the above picture) and one horizontal part \(\mathsf{U}\) tangent to the pullback of the space of parameters \(\abs{\mathcal{O}(d)}\):
\[
  \widetilde{\mathsf{V}}
  =
  \mathsf{V}-\mathsf{U}.
\]
Of course here neither of the two parts is assumed to be itself tangent to \(J_{k}^{\mathit{vert}}\).

For such a vector field \(\widetilde{\mathsf{V}}\) to be indeed tangential
to the submanifold \(J_{k}^{\mathit{vert}}\subset J_{k}\) of the ambient jet space,
the Lie derivatives along \(\widetilde{\mathsf{V}}\) of the \(k+1\) defining equations \eqref{eq:DP} have to be all zero at points of \(J_{k}^{\mathit{vert}}\):
\begin{equation}
  \label{eq:VDP}
  \widetilde{\mathsf{V}}
  \cdot
  \big(
  \mathsf{D}_{t}^{p}
  \cdot
  P
  \big)
  \big\vert_{J_{k}^{\mathit{vert}}}
  \equiv
  0
  \qquad
  {\scriptstyle(p=0,1,\dotsc,k)},
\end{equation}
where as above \(P\) is the universal polynomial
\(
P
=
\sum_{\lvert\alpha\rvert\leq d}
a_{\alpha}\,
z^{\alpha}
\).
The \textsl{Hadamard's lemma} asserts that a function vanishes identically on a submanifold if and only if it is locally a linear combination of the defining functions of this submanifold, whence
the system \eqref{eq:VDP} is equivalent to the existence for every point \(x\in J_{k}^{\mathit{vert}}\) of an open neighbourhood \(U_{x,k}\ni x\) and of functions \(H_{0}^{p},\dotsc,H_{k}^{p}\in\C^{U_{x,k}}\) such that:
\begin{equation}
  \label{eq:hadamard}
  \widetilde{\mathsf{V}}
  \cdot
  \big(
  \mathsf{D}_{t}^{p}
  \cdot
  P
  \big)
  \big\vert_{U_{x,k}}
  =
  H_{0}^{p}\,P
  +
  H_{1}^{p}\,
  \big(\mathsf{D}_{t}^{1}\cdot P\big)
  +
  \dotsb
  +
  H_{k}^{p}\,
  \big(\mathsf{D}_{t}^{k}\cdot P\big)
  \qquad
  {\scriptstyle(p=0,1,\dotsc,k)}.
\end{equation}

This latter characterization makes appear that the sheaves \(T_{J_{k}^{\mathit{vert}}},\dotsc, T_{J_{1}^{vert}}\) are \emph{not} compatible with the forgetful maps \(J_{k}\to J_{l}\) valued in lower order jet spaces, since when \(\widetilde{V}\) is a section of \(T_{J_{k}^{\mathit{vert}}}\), then the projection of \(\widetilde{V}\) is not necessarily a section of \(T_{J_{l}^\mathit{vert}}\): this only happens when \(H_{q}^{p}=0\) for \(p\leq l <q\).
For this reason, and since iterations of derivatives always have triangular properties (as shown in the Faà di Bruno formula \eqref{eq:faa1} above),
it is more natural to work with
the subsheaf of vector spaces \(\mathcal{T}_{k}\) of the tangent sheaf to \(J_{k}^{\mathit{vert}}\), whose sections are the vector fields that are tangential to \emph{each} submanifold
\[
  \mathcal{Z}_{p}
  \bydef
  \big\{
    P=\mathsf{D}_{t}\cdot P=\dotsb=\mathsf{D}_{t}^{p}\cdot P=0
  \big\}
  \quad
  {\scriptstyle(p=0,1,\dotsc,k)},
\]
not only at points of \(J_{k}^{\mathit{vert}}=\mathcal{Z}_{k}\subset \mathcal{Z}_{p}\), but at every point of \(\mathcal{Z}_{p}\), for all \(p\).

This subsheaf \(\mathcal{T}_{k}\) compares favourably with \(T_{J_{k}^{\mathit{vert}}}\) regarding the properties mentioned above.
Indeed, by Hadamard's lemma, the sections \(\widetilde{\mathsf{V}}\) of \(\mathcal{T}_{k}\) are characterized by the existence for each \(p=0,1,\dotsc,k\) and for every point \(x\in \mathcal{Z}_{p}\) of an open neighbourhood \(U_{x,p}\ni x\) and of functions \(H_{0}^{q},\dotsc,H_{p}^{q}\in\C^{U_{x,p}}\) such that:
\[
  \widetilde{\mathsf{V}}
  \cdot
  \big(
  \mathsf{D}_{t}^{q}
  \cdot
  P
  \big)
  \big\vert_{U_{x,p}}
  =
  H_{0}^{q}\,P
  +
  H_{1}^{q}\,
  \big(\mathsf{D}_{t}^{1}\cdot P\big)
  +
  \dotsb
  +
  H_{p}^{q}\,
  \big(\mathsf{D}_{t}^{p}\cdot P\big)
  \qquad
  {\scriptstyle(q=0,1,\dotsc,p)}.
\]
But fixing \(q\) instead of \(p\), for \(p\geq q\), because \(x\in \mathcal{Z}_{p}\subset \mathcal{Z}_{q}\), up to shrinking \(U_{x,p}\) so that \(U_{x,p}\subset U_{x,q}\), one has the more precise statement:
\[
  \widetilde{\mathsf{V}}
  \cdot
  \big(
  \mathsf{D}_{t}^{q}
  \cdot
  P
  \big)
  \big\vert_{U_{x,p}}
  =
  H_{0}^{q}\,P
  +
  H_{1}^{q}\,
  \big(\mathsf{D}_{t}^{1}\cdot P\big)
  +
  \dotsb
  +
  H_{q}^{q}\,
  \big(\mathsf{D}_{t}^{q}\cdot P\big);
\]
in other words, it is always possible to take \(H_{p}^{q}=0\) for \(q<p\).
To sum up, \(\mathsf{\widetilde{V}}\) is a section of \(\mathcal{T}_{k}\) if and only if for every \(p=0,1,\dotsc,k\), and every \(x\in \mathcal{Z}_{p}\) there exists an open neighbourhood \(U_{x,p}\) and a lower \emph{triangular} matrix \(H\in \mathrm{Mat}_{p}(\C^{U_{x,p}})\) such that:
\begin{equation}
  \label{eq:hadamard2}
  \widetilde{\mathsf{V}}
  \cdot
  \big(
  \mathsf{D}_{t}^{q}
  \cdot
  P
  \big)
  \big\vert_{U_{x,p}}
  =
  H_{0}^{q}\,P
  +
  H_{1}^{q}\,
  \big(\mathsf{D}_{t}^{1}\cdot P\big)
  +
  \dotsb
  +
  H_{q}^{q}\,
  \big(\mathsf{D}_{t}^{q}\cdot P\big)
  \qquad
  {\scriptstyle(q=0,1,\dotsc,p)}.
\end{equation}

Note that in the compact case we are dealing with here, this subtleties can be forgotten, since we will even find vector fields satisfying the equations \eqref{eq:VDP} not only on \(J_{k}^{\mathit{vert}}\) but identically on \(J_{k}\), \textit{i.e.} take \(H=0\). In the genuine logarithmic case however, we will need to restrict to \(\mathcal{Z}_{p}\) in order to cancel the \(p\)-th equation (see below).

\subsubsection{Fundamental case.}
For the sake of clear presentation of ideas, we will first consider the \emph{simplest case} where the vertical part \(\mathsf{V}\) is one of the vector fields \(\vf{j}{}\) for \(j=1,\dotsc,n\). The consideration of this case will prove to be fundamental to gain an intuition of the general solution to our problem.

The considered vertical vector field \(\mathsf{V}\bydef\vf{j}{}\) is \emph{not} tangent to the subspace of vertical jets, since already on the first defining equation in \eqref{eq:VDP} it is not zero:
\[
  \mathsf{V}\cdot P
  =
  \vf{j}{}
  \cdot
  \sum_{\abs{\alpha}\leq d}
  a_{\alpha}\,z^{\alpha}
  =
  \sum_{\abs{\alpha}\leq d}
  a_{\alpha}\,
  \alpha_{j}\,
  z^{\alpha-\mathbi{1}_{j}}
  =
  \sum_{\abs{\beta}\leq d-1}
  {\color{black!50}
  \underset{\hfill\scalebox{.7}{\framebox[1.5\width][c]{\color{black!80}$\defby u_{j,\beta}$}}}
  {\underline{\textcolor{black}{
    a_{\beta+\mathbi{1}_{j}}\,
    (\beta_{j}+1)
  }}}}\,
  z^{\beta}.
\]
But it is easy to cancel this last polynomial term \(\sum u_{j,\beta}\,z^{\beta}\) by means of an appropriate correction \(\mathsf{V}\mapsto\mathsf{V}-\mathsf{U}\). 
Indeed, to find such a vector field \(\mathsf{U}\), noticing that:
\[
  \vf[a]{\beta}{}
  \cdot P
  =
  \vf[a]{\beta}{}
  \cdot
  \left(
  {\textstyle \sum_{\abs{\alpha}\leq d} }\,
  a_{\alpha}\,z^{\alpha}
  \right)
  =
  z^{\beta},
\]
it suffices to take the following specific linear combination of the \(\\vf[a]{\beta}{}\) with coefficients precisely equal to the above \(u_{j,\beta}\):
\[
  \mathsf{U}
  \bydef
  \sum_{\abs{\beta}\leq d-1}
  u_{j,\beta}\,
  \vf[a]{\beta}{}
  =
  \sum_{\abs{\beta}\leq d-1}
  a_{\beta+{\bf 1}_{j}}\,
  (\beta_{j}+1)\,
  \vf[a]{\beta}{},
\]
which yields the sought vanishing of the first equation in \eqref{eq:VDP}, identically on \(J_{k}\):
\[
  \left(
  \mathsf{V}-\mathsf{U}
  \right)
  \cdot
  P
  =
  \bigg(
  \vf{j}{}
  -
  \sum_{\abs{\beta}\leq d-1}
  u_{j,\beta}\,
  \vf[a]{\beta}{}
  \bigg)
  \cdot
  P
  =
  \sum_{\abs{\beta}\leq d-1}\!
  u_{j,\beta}\,z^{\beta}
  -
  \sum_{\abs{\beta}\leq d-1}\!
  u_{j,\beta}\,z^{\beta}
  =
  0.
\]

So we get \(\widetilde{\mathsf{V}}\cdot P=(\mathsf{V}-\mathsf{U})\cdot P=0\). But recall that in order to prove the tangency to the subspace of vertical jets, it is also necessary to check the vanishing of the \(k\) remaining equations in \eqref{eq:VDP}: 
\[
  \widetilde{\mathsf{V}}
  \cdot 
  (
  \mathsf{D}_{t}^{q}
  \cdot 
  P
  )
  \stackrel?=
  0.
  \qquad
  {\scriptstyle(q=1,\dotsc,k)}.
\] 
What makes this first considered case very simple is that here having obtained \(\widetilde{\mathsf{V}}\cdot P=0\) actually suffices to conclude, because the coefficients of the constructed vector field:
\[
  \widetilde{\mathsf{V}}
  =
  \mathsf{V}-\mathsf{U}
  \bydef
  \vf{j}{}
  -
  \sum_{\abs{\beta}\leq d-1}
  a_{\beta+\mathbi{1}_{j}}\,(\beta_{j}+1)\,
  \vf[a]{\beta}{}
\]
do not depend on the \(z\)-variables, and thus the powers of the action \(\mathsf{D}_{t}\cdot\) and the action \(\widetilde{\mathsf{V}}\cdot\) commute,
whence:
\[
  \widetilde{\mathsf{V}}\cdot(\mathsf{D}_{t}^{q}\cdot P)
  =
  \mathsf{D}_{t}^{q}\cdot(\widetilde{\mathsf{V}}\cdot P)
  =
  0.
\]

\medskip

\begin{slshape}
  It should immediately be pointed out that \emph{the simplicity of the above computations could appear misleading}, since for a general vertical vector field:
  \[
    \mathsf{V}
    \bydef
    \sum_{i=1,\dotsc,n}
    \left(
    \sum_{q=0}^{k}
    v_{i}^{(q)}\!\big(a_{\alpha};\,z_{1},\dotsc,z_{n};\,z_{1}^{(1)},\dotsc,z_{n}^{(1)},\dotsc,z_{1}^{(k)},\dotsc,z_{n}^{(k)}\bigr)\,
    \vf{i}{(q)}
    \right),
  \]
  the huge linear systems one has to solve in order to find an adequate correction \(\mathsf{U}\) involves the multivariate Faà di Bruno formulas (\textit{cf.} \cite{MR2372741,MR2331545,MR2383820,MR2543663}), the expressions of which, though classical, are complicated.

  However, \emph{the above considerations are inspiring}:
  we will indeed avoid almost all technical inconveniences and make the general case nearly as simple as the simplest case.
\end{slshape}

\subsubsection{Strategy in the general case.}

We will proceed in three simplifying steps - \textit{a}, \textit{b} and \textit{c} - summarized here, the details of the two last being expanded in separate sections below.
\begin{enumerate}[label={\itshape\alph*.}]
  \item 
    Fix the vertical vector field \(\mathsf{V}\). Then find a \emph{correction \(\mathsf{U}\)} with the same image on the defining equations \eqref{eq:VDP}, and use it to cancel these equations:
    \[
      \label{eq:a} \tag{$a$}
      \mathsf{V}-\mathsf{U}
      \in
      \mathcal{T}_{k}
      \Longleftrightarrow
      \left\{
        \begin{array}{rclcrcll}
          \mathsf{V}
          &\cdot&
          P(\xi)
          &=&
          \mathsf{U}
          &\cdot& 
          P(\xi)
          &\big\vert_{\xi\in \mathcal{Z}_{0}}
          \\
          \mathsf{V}
          &\cdot&
          \big(
          \mathsf{D}_{t}
          \cdot 
          P
          \big)(\xi)
          &=&
          \mathsf{U}
          &\cdot&
          \big(
          \mathsf{D}_{t}
          \cdot 
          P
          \big)(\xi)
          &\big\vert_{\xi\in \mathcal{Z}_{1}}
          \\
          &&&\vdots&&&\\
          \mathsf{V}
          &\cdot&
          \big(
          \mathsf{D}_{t}^{k}
          \cdot 
          P
          \big)(\xi)
          &=&
          \mathsf{U}
          &\cdot&
          \big(
          \mathsf{D}_{t}^{k}
          \cdot 
          P
          \big)(\xi)
          &\big\vert_{\xi\in \mathcal{Z}_{k}},
        \end{array}
      \right.
    \]
    where as above \(\mathcal{Z}_{p}=\{P=\mathsf{D}_{t}\cdot P=\dotsb=\mathsf{D}_{t}^{p}\cdot P=0\}\subset J_{k}\).
  \item 
    When \(\mathsf{D}_{t}\) and \(\widetilde{\mathsf{V}}\) commute like in the fundamental case, one should directly see that a lot of equations cancel.
    Also, one should avoid as much as possible expanding the expressions \(\mathsf{D}_{t}^{p}\cdot P\) in order to reduce the combinatorial complexity.
    With these goals in mind, it makes sense to let \(\mathsf{D}_{t}\) act on the vector field \(\widetilde{\mathsf{V}}\) rather than on the equation \(P\), using \emph{adjoint action} (see below), which will shortly give: 
    \[
      \label{eq:b}\tag{$b$}
      \mathsf{V}-\mathsf{U}
      \in
      \mathcal{T}_{k}
      \Longleftrightarrow
      \left\{
        \begin{array}{rclcrcl}
          \mathsf{V}
          &\cdot&
          P(\xi)
          &=&
          \mathsf{U}
          &\cdot&
          P(\xi)
          \big\vert_{\xi\in \mathcal{Z}_{0}}
          \\
          (\mathsf{D}_{t}\ad\mathsf{V})
          &\cdot&
          P(\xi)
          &=&
          (\mathsf{D}_{t}\ad\mathsf{U})
          &\cdot&
          P(\xi)
          \big\vert_{\xi\in \mathcal{Z}_{1}}
          \\
          &&&\vdots&&&\\
          (\mathsf{D}_{t}^{k}\ad\mathsf{V})
          &\cdot&
          P(\xi)
          &=&
          (\mathsf{D}_{t}^{k}\ad\mathsf{U})
          &\cdot&
          P(\xi)
          \big\vert_{\xi\in \mathcal{Z}_{k}},
        \end{array}
      \right.
      .
    \]
    Making furthermore the choice \(\mathsf{V}=\vf{j}{(q)}\), which generalizes the fundamental case where \(\mathsf{V}=\vf{j}{}\) (\textit{i.e.} \(q=0\)), we will see that the values \((\mathsf{D}_{t}^{p}\ad\vf{j}{(q)})\cdot P(\xi)\) one has to cancel become very easy to compute and that they do not depend on the jet variables.
  \item 
    To find a correction \(\mathsf{U}\) that cancels these polynomial values \((\mathsf{D}_{t}^{p}\ad\vf{j}{(q)})\cdot P(\xi)\), we claim that it suffices to determine first some \emph{building-block vector fields} \(\mathsf{U}\) ---\,having the same role as the vector fields \(\vf[a]{\beta}{}\) in the fundamental case (\(q=0\))\,--- such that the corresponding entries \((\mathsf{D}_{t}^{p}\ad\mathsf{U})\cdot P(\xi)\) in the last column of \eqref{eq:b} are all zero for \(p\neq q\), while it is the monomial \((\mathsf{D}_{t}^{q}\ad\mathsf{U})\cdot P(\xi)=z^{\beta}\) for \(p=q\). Indeed, by linearity with respect to the coefficients \(a_{\alpha}\), one can then use these building-blocks vector fields to piece together a corrective vector field.

    We will see later on that it is adequate to use the geometric jet coordinates described in \S\ref{sse:geometricJetCoordinates} in order to avoid expanding the expressions \(\mathsf{D}_{t}^{p}\ad\mathsf{U}\). 
    Over the subset of invertible jets, working without loss of generality on the set \(\{z_{1}'\neq0\}\), since \eqref{cor:DtD1} is triangular invertible, one can use \(\mathsf{D}_{z_{1}}\) instead of \(\mathsf{D}_{t}\) to define the vertical jets.
    Restarting and applying again \eqref{eq:a} and \eqref{eq:b}, the problem reduces in the end to the obtainment of building-block vector fields \(\mathsf{U}_{q}^{\beta}\) such that:
    \begin{equation*}
      \label{eq:c}\tag{$c$}
      \text{if }p\neq q\colon\quad
      (\mathsf{D}_{z_{1}}^{p}\ad\mathsf{U}_{q}^{\beta})\cdot P(\xi)=0
      \qquad
      \text{and}\colon\quad
      (\mathsf{D}_{z_{1}}^{q}\ad\mathsf{U}_{q}^{\beta})\cdot P(\xi)=z^{\beta}.
    \end{equation*}

\end{enumerate}

\subsubsection{Use adjoint action.}
According to the strategy outlined just above, we consider the \textsl{Lie derivative of the vector field \(\widetilde{\mathsf{V}}\) along \(\mathsf{D}_{t}\)}, that is by definition the vector field \((\mathsf{D}_{t}\ad \widetilde{\mathsf{V}})\) acting on functions as:
\begin{equation}
  \label{eq:ad}
  (\mathsf{D}_{t}\ad\widetilde{\mathsf{V}})\cdot \bullet
  \bydef
  \mathsf{D}_{t}\cdot(\widetilde{\mathsf{V}}\cdot\bullet)
  -
  \widetilde{\mathsf{V}}\cdot(\mathsf{D}_{t}\cdot \bullet).
\end{equation}
For the sake of clarity, we will denote the adjoint action by ``\(\ad\)'' in order to alert to the fact, also emphasized by the use of parentheses in \eqref{eq:ad} and throughout this text, that a unified notation for the action of vector fields on functions and on vector fields would not be associative, what is already apparent in the definition \eqref{eq:ad}.
As an example, \(\widetilde{\mathsf{V}}\cdot P\equiv 0\) does not imply \((\mathsf{D}_{t}\ad\widetilde{\mathsf{V}})\cdot P\equiv0\).

\begin{lemma}
  \label{lem:VDP-LVP}
  The vector field \(\widetilde{\mathsf{V}}\) is a section of \(\mathcal{T}_{k}\) 
  if and only if:
  \[
    \bigl(
    \mathsf{D}_{t}^{p}
    \ad
    \widetilde{\mathsf{V}}
    \big)
    \cdot
    P
    \big\vert_{
      \{
        P=\mathsf{D}_{t}\cdot P=\dotsb=\mathsf{D}_{t}^{p}\cdot P=0
      \}
    }
    \equiv
    0
    \qquad
    {\scriptstyle(p=0,1,\dotsc,k)}.
  \]
\end{lemma}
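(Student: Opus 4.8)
The plan is to translate the already-established criterion \eqref{eq:hadamard2} for membership in $\mathcal{T}_{k}$ into the adjoint-action language, exploiting the fact that $\mathsf{D}_{t}\cdot P,\dotsc,\mathsf{D}_{t}^{k}\cdot P$ are themselves obtained by iterating $\mathsf{D}_{t}$ on $P$. The key algebraic identity is the commutator rule: for any vector field $\widetilde{\mathsf{V}}$ and any function $g$,
\[
  \widetilde{\mathsf{V}}\cdot(\mathsf{D}_{t}\cdot g)
  =
  \mathsf{D}_{t}\cdot(\widetilde{\mathsf{V}}\cdot g)
  -
  (\mathsf{D}_{t}\ad\widetilde{\mathsf{V}})\cdot g,
\]
which is just the definition \eqref{eq:ad} rearranged. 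Iterating this, one gets by an easy induction on $p$ that
\[
  \widetilde{\mathsf{V}}\cdot(\mathsf{D}_{t}^{p}\cdot P)
  =
  \sum_{q=0}^{p}
  (-1)^{q}\binom{p}{q}\,
  \mathsf{D}_{t}^{p-q}\cdot\bigl((\mathsf{D}_{t}^{q}\ad\widetilde{\mathsf{V}})\cdot P\bigr),
\]
or more usefully a triangular relation expressing $\widetilde{\mathsf{V}}\cdot(\mathsf{D}_{t}^{p}\cdot P)$ as an integer-coefficient combination of the terms $\mathsf{D}_{t}^{p-q}\cdot\bigl((\mathsf{D}_{t}^{q}\ad\widetilde{\mathsf{V}})\cdot P\bigr)$ for $q=0,\dotsc,p$, with the $q=p$ term equal (up to sign) to $(\mathsf{D}_{t}^{p}\ad\widetilde{\mathsf{V}})\cdot P$ itself. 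The first thing I would do is record and prove this iterated commutator identity cleanly.

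Next I would run the equivalence in two directions using this identity together with \eqref{eq:hadamard2}. For the "if" direction, assume $(\mathsf{D}_{t}^{q}\ad\widetilde{\mathsf{V}})\cdot P$ vanishes on $\mathcal{Z}_{q}$ for every $q\leq k$; by Hadamard's lemma each such function is, locally near a point of $\mathcal{Z}_{q}$, a combination $\sum_{r=0}^{q}G_{r}^{q}\,(\mathsf{D}_{t}^{r}\cdot P)$. Plugging these into the expansion of $\widetilde{\mathsf{V}}\cdot(\mathsf{D}_{t}^{p}\cdot P)$ and applying Leibniz to the outer $\mathsf{D}_{t}^{p-q}$ — noting that $\mathsf{D}_{t}\cdot(\mathsf{D}_{t}^{r}\cdot P)=\mathsf{D}_{t}^{r+1}\cdot P$, so every resulting term carries some factor $\mathsf{D}_{t}^{s}\cdot P$ with $s\leq p$ — exhibits $\widetilde{\mathsf{V}}\cdot(\mathsf{D}_{t}^{p}\cdot P)$ as a lower-triangular combination of $P,\mathsf{D}_{t}\cdot P,\dotsc,\mathsf{D}_{t}^{p}\cdot P$, which is exactly condition \eqref{eq:hadamard2}. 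For the "only if" direction, I would argue by induction on $p$: the triangular identity above lets one solve for $(\mathsf{D}_{t}^{p}\ad\widetilde{\mathsf{V}})\cdot P$ as $(-1)^{p}$ times $\widetilde{\mathsf{V}}\cdot(\mathsf{D}_{t}^{p}\cdot P)$ minus a combination of the lower terms $\mathsf{D}_{t}^{p-q}\cdot\bigl((\mathsf{D}_{t}^{q}\ad\widetilde{\mathsf{V}})\cdot P\bigr)$ with $q<p$; by \eqref{eq:hadamard2} the first piece lies in the ideal generated by $P,\dotsc,\mathsf{D}_{t}^{p}\cdot P$ on $U_{x,p}$, hence vanishes on $\mathcal{Z}_{p}$, and by the induction hypothesis together with $\mathcal{Z}_{p}\subset\mathcal{Z}_{q}$ each lower term vanishes on $\mathcal{Z}_{p}$ as well (here one uses that $\mathsf{D}_{t}^{r}\cdot P$ itself vanishes on $\mathcal{Z}_{p}$ for $r\leq p$, so applying $\mathsf{D}_{t}^{p-q}$ to something in the local ideal keeps it in the local ideal). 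Hence $(\mathsf{D}_{t}^{p}\ad\widetilde{\mathsf{V}})\cdot P$ vanishes on $\mathcal{Z}_{p}$.

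The main obstacle I anticipate is bookkeeping rather than conceptual: one must be careful that the coefficient functions produced by Hadamard's lemma are only defined locally, and that applying $\mathsf{D}_{t}^{p-q}$ to a local expression $\sum G_{r}^{q}(\mathsf{D}_{t}^{r}\cdot P)$ produces, via Leibniz, new local coefficient functions — so the final matrix $H$ witnessing \eqref{eq:hadamard2} has to be assembled from these and checked to be lower triangular. The triangularity is automatic because every $\mathsf{D}_{t}$ applied to an expression in the ideal $(P,\dotsc,\mathsf{D}_{t}^{r}\cdot P)$ lands in $(P,\dotsc,\mathsf{D}_{t}^{r+1}\cdot P)$, raising the index by at most one, and we start from index $q\leq p$; so no term with index $>p$ ever appears. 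Once this is organized, the proof is a two-line induction in each direction, and I would present it by first stating the iterated commutator identity as a sublemma, then giving the two implications.
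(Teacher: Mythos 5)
Your proposal is correct and follows essentially the same route as the paper: both rest on the iterated commutator identity $\widetilde{\mathsf{V}}\cdot(\mathsf{D}_{t}^{p}\cdot P)=\sum_{q}(-1)^{q}\binom{p}{q}\mathsf{D}_{t}^{p-q}\cdot\bigl((\mathsf{D}_{t}^{q}\ad\widetilde{\mathsf{V}})\cdot P\bigr)$ (and its inverse), combined with Hadamard's lemma and the observation that applying $\mathsf{D}_{t}$ via Leibniz to a combination of $P,\dotsc,\mathsf{D}_{t}^{r}\cdot P$ raises the top index by at most one, preserving triangularity. The only cosmetic difference is that you invert the triangular system by induction where the paper writes the explicit inverse binomial formula.
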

\begin{proof}
  Using again Hadamard's lemma, the function \(\big(\mathsf{D}_{t}^{p}\ad\widetilde{\mathsf{V}}\big)\cdot P\) vanishes identically on the submanifold 
  \(\mathcal{Z}_{p}
  \bydef
  \big\{
    P=\mathsf{D}_{t}\cdot P=\dotsb=\mathsf{D}_{t}^{p}\cdot P=0
  \big\}
  \)
  if and only if for every point \(x\in \mathcal{Z}_{p}\)
  there exists an open neighbourhood \(V_{x,p}\) and functions \(G_{p}^{q}\in\C^{V_{x,p}}\) such that:
  \[
    \big(
    \mathsf{D}_{t}^{q}
    \ad
    \widetilde{\mathsf{V}}
    \big)
    \cdot
    P
    \big\vert_{V_{x,p}}
    =
    G_{0}^{q}\,P
    +
    G_{1}^{q}\,
    \big(\mathsf{D}_{t}^{1}\cdot P\big)
    +
    \dotsb
    +
    G_{p}^{q}\,
    \big(\mathsf{D}_{t}^{q}\cdot P\big)
    \quad
    {\scriptstyle(q=0,1,\dotsc,p)}.
  \]

  Thus \(\widetilde{\mathsf{V}}\) fulfill to the \(k+1\) conditions of the statement if and only if for \(p=0,1,\dotsc,k\) and for every point \(x\in \mathcal{Z}_{p}=\mathcal{Z}_{0}\cap\dotsb\cap \mathcal{Z}_{p}\), their exists an open neighbourhood \(V_{x,p}'=V_{x,0}\cap\dotsb\cap V_{x,p}\) and a lower triangular matrix \(G\in \mathrm{Mat}_{k}(\C^{V_{x,p}'})\) such that:
  \[
    \big(
    \mathsf{D}_{t}^{q}
    \ad
    \widetilde{\mathsf{V}}
    \big)
    \cdot
    P
    \big\vert_{V_{x,p}}
    =
    G_{0}^{q}\,P
    +
    G_{1}^{q}\,
    \big(\mathsf{D}_{t}^{1}\cdot P\big)
    +
    \dotsb
    +
    G_{q}^{q}\,
    \big(\mathsf{D}_{t}^{q}\cdot P\big)
    \qquad
    {\scriptstyle(q=0,1,\dotsc,p)}.
    \tag{$\ast$}
  \]

  Now, from the very definition of the adjoint action \eqref{eq:ad}, one can deduce the combinatorial formulas:
  \begin{align*}
    (\mathsf{D}_{t}^{q}
    \ad
    \widetilde{\mathsf{V}})
    \cdot
    \bullet
    &=
    \sum_{p=0}^{q}
    (-1)^{p}
    \binom{q}{p}
    \mathsf{D}_{t}^{q-p}
    \cdot
    \Bigl(
    \widetilde{\mathsf{V}}
    \cdot
    (
    \mathsf{D}_{t}^{p}
    \cdot\bullet
    )
    \Bigr),
    \tag{$\Rightarrow$}
    \\
    \intertext{and inversely:}
    \widetilde{\mathsf{V}}\cdot
    (
    \mathsf{D}_{t}^{q}
    \cdot\bullet
    )
    &=
    \sum_{p=0}^{q}
    (-1)^{p}
    \binom{q}{p}
    \mathsf{D}_{t}^{q-p}
    \cdot
    \Bigl(
    (\mathsf{D}_{t}^{p}\ad\widetilde{\mathsf{V}})
    \cdot\bullet
    \Bigr).
    \tag{$\Leftarrow$}
  \end{align*}
  These formulas allow respectively to go from the first characterization \eqref{eq:hadamard} to the second one ($\ast$) and the other way around, since the Leibniz rule:
  \[
    \mathsf{D}_{t}\cdot(fg)
    =
    (\mathsf{D}_{t}\cdot f)\,g
    +
    f\,(\mathsf{D}_{t}\cdot g),
  \]
  implies that the \((q-p)\)-th derivative along \(\mathsf{D}_{t}\) of a linear combination of \(P\), \(\mathsf{D}_{t}^{1}\cdot P\), \dots, \(\mathsf{D}_{t}^{p}\cdot P\) is automatically a linear combination of \(P\), \(\mathsf{D}_{t}^{1}\cdot P\), \dots, \(\mathsf{D}_{t}^{q}\cdot P\).
\end{proof}
From now on, we will denote by \(\mathbi{e}_{0},\mathbi{e}_{1},\dotsc,\mathbi{e}_{k}\) the standard basis of units vectors for \(\C^{k+1}\).
We will also denote by \(\Lambda_{t}\) the linear map
associating to each vector field \(\widetilde{\mathsf{V}}\) on \(J_{k}\) a vector-valued symmetric form on \(J_{k}\):
\begin{equation}
  \label{eq:Lt}
  \Lambda_{t}\big(\widetilde{\mathsf{V}}\big)
  \bydef
  \bigl(\widetilde{\mathsf{V}}\cdot P\bigr)\;\mathbi{e}_{0}+
  \bigl(\,(\mathsf{D}_{t}^{1}\ad\widetilde{\mathsf{V}})\cdot P\,\bigr)\;\mathbi{e}_{1}+
  \dotsb+
  \bigl(\,(\mathsf{D}_{t}^{k}\ad\widetilde{\mathsf{V}})\cdot P\,\bigr)\;\mathbi{e}_{k},
\end{equation}
in such a way that the sections of \(\mathcal{T}_{k}\) are precisely those satisfying:
\[
  \Lambda_{t}(\widetilde{\mathsf{V}})
  \equiv 
  H (P,\mathsf{D}_{t}\cdot P,\dotsc,\mathsf{D}_{t}^{k}\cdot P),
\] 
with \(H\) lower triangular (by Lemma \ref{lem:VDP-LVP}).
Recall that in the compact case, we will even take \(H=0\).

Let us now explain how using the adjoint action simplifies the problem.
Using the \textsl{Leibniz rule} for the general Lie derivative:
\begin{equation}
  \label{eq:leibniz}
  \mathsf{V}_{1}\ad (f\,\mathsf{V}_{2})
  =
  (\mathsf{V}_{1}\cdot f)\,\mathsf{V}_{2}
  +
  f\,(\mathsf{V}_{1}\ad\mathsf{V}_{2}),
\end{equation}
the crucial observation of the fundamental case, namely that \(\mathsf{D}_{t}\ad \vf{j}{}=0\), can be generalized as follows:
\begin{lemma}
  \label{lem:Dtvf}
  For any index \(j=1,\dotsc,n\), one has \(\mathsf{D}_{t}\ad \vf{j}{}=0\) and for any order of derivation \(q=1,\dotsc,k\), the Lie derivative of the vector field \(\vf{j}{(q)}\) is the following plain vector field of the same kind:
  \[
    \mathsf{D}_{t}
    \ad
    \vf{j}{(q)}
    =
    -q\,
    \vf{j}{(q-1)}.
  \]
\end{lemma}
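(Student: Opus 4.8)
The plan is to compute the Lie bracket $\mathsf{D}_{t}\ad\vf{j}{(q)}$ directly from the explicit coordinate formula \eqref{eq:formalDifferentiation} for the formal differentiation operator, namely
\[
  \mathsf{D}_{t}
  =
  \sum_{i=1,\dotsc,n}
  \sum_{p=0}^{k-1}
  (p+1)\;
  z_{i}^{(p+1)}\;
  \vf{i}{(p)}.
\]
Since all the coefficient functions $(p+1)\,z_{i}^{(p+1)}$ of $\mathsf{D}_{t}$ are themselves jet coordinates, and $\vf{j}{(q)}$ is a constant-coefficient vector field, the bracket $[\,\vf{j}{(q)},\mathsf{D}_{t}\,]$ reduces to a single round of partial differentiation: applying $\vf{j}{(q)}=\partial/\partial z_{j}^{(q)}$ to each coefficient of $\mathsf{D}_{t}$ and keeping the corresponding basis vector field. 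Concretely, $\vf{j}{(q)}\cdot\bigl((p+1)z_{i}^{(p+1)}\bigr)=(p+1)\,\delta_{i,j}\,\delta_{p+1,q}$, which is nonzero only for the single term $i=j$, $p=q-1$, contributing $q\,\vf{j}{(q-1)}$.

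The first step is to record the sign convention: the Lie derivative along $\mathsf{D}_{t}$ is $\mathsf{D}_{t}\ad\widetilde{\mathsf{V}} = [\mathsf{D}_{t},\widetilde{\mathsf{V}}]$ as defined in \eqref{eq:ad}, so that $\mathsf{D}_{t}\ad\vf{j}{(q)} = -\,[\,\vf{j}{(q)},\mathsf{D}_{t}\,]$. The second step is to invoke the Leibniz rule \eqref{eq:leibniz} for the Lie derivative: writing $\mathsf{D}_{t}$ as a sum $\sum_{i,p}(p+1)z_{i}^{(p+1)}\vf{i}{(p)}$ and using $\vf{i}{(p)}\ad\vf{j}{(q)}=0$ (these are coordinate vector fields, hence commute), each summand contributes only the term $\bigl(\vf{j}{(q)}\cdot\bigl((p+1)z_{i}^{(p+1)}\bigr)\bigr)\vf{i}{(p)}$ with an overall sign. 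The third step is the elementary evaluation above, which isolates exactly one surviving term. For $q=0$ there is no index $p=q-1=-1$ in the range $0\leq p\leq k-1$, so the sum is empty and $\mathsf{D}_{t}\ad\vf{j}{}=0$; for $1\leq q\leq k$ the unique surviving term yields $\mathsf{D}_{t}\ad\vf{j}{(q)}=-q\,\vf{j}{(q-1)}$, as claimed.

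There is essentially no obstacle here: the statement is a short bracket computation and the only points requiring a moment's care are the sign coming from the definition \eqref{eq:ad} of the adjoint action and the bookkeeping of which $(i,p)$ indices survive the differentiation. One should double-check the edge case $q=k$ (which is fine, since $p=k-1$ lies in the range) and the edge case $q=0$ (where the relevant index falls outside the range $0,\dots,k-1$, correctly giving zero and recovering the ``crucial observation of the fundamental case'' that $\mathsf{D}_{t}\ad\vf{j}{}=0$).
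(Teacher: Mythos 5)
Your proof is correct and follows essentially the same route as the paper: both use the antisymmetry of the bracket to reduce to $\vf{j}{(q)}\ad\mathsf{D}_{t}$, expand via the Leibniz rule, and observe that only the term with $i=j$, $p=q-1$ survives the differentiation of the coefficients of $\mathsf{D}_{t}$. Your explicit treatment of the edge case $q=0$ (empty index range) is a welcome small addition, but the argument is otherwise identical.
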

\begin{proof}
  Recall that by definition, the formal differentiation of \(k\)-jets is the linear map associated to the vector field
  \[
    \mathsf{D}_{t}
    \bydef
    \sum_{i=1,\dotsc,n}
    \biggl(\,
    \sum_{p=0}^{k-1}
    (p+1)\,
    z_{i}^{(p+1)}\,\vf{i}{(p)}
    \biggr).
  \]
  Now, it is might be more intuitive to use the antisymmetry in \eqref{eq:ad}:
  \[
    \mathsf{D}_{t}
    \ad
    \vf{j}{(q)}
    =
    -
    \vf{j}{(q)}
    \ad
    \mathsf{D}_{t},
  \] 
  in order to reshape the sought quantity.
  Next, applying the Leibniz rule \eqref{eq:leibniz}, one gets the expression:
  \[
    \vf{j}{(q)}
    \ad
    \mathsf{D}_{t}
    =
    \sum_{i=1,\dotsc,n}
    \biggl(\,
    \sum_{p=0}^{k-1}
    (p+1)\,
    \left(\vf{j}{(q)}\cdot z_{i}^{(p+1)}\right)
    \vf{i}{(p)}
    \biggr)
    +
    \sum_{i=1,\dotsc,n}
    \biggl(\,
    \sum_{p=0}^{k-1}
    (p+1)\,
    z_{i}^{(p+1)}\,
    \left(\vf{j}{(q)}\ad\vf{i}{(p)}\right)
    \biggr),
  \]
  in which all the terms but one in the first summand are zero, by independence of the jet variables, and all terms in the second summand are zero by commutativity.
  The only remaining term is \(q\,\vf{j}{(q-1)}\).
\end{proof}
The result of the above lemma \ref{lem:Dtvf} is easy to iterate, in order to obtain that for any integers \(0\leq p,q\leq k\) the \(p\)-th Lie derivative of the vector field \(\vf{j}{(q)}\) along \(\mathsf{D}_{t}\) is the vector field:
\[
  \mathsf{D}_{t}^{p}
  \ad
  \vf{j}{(q)}
  =
  (-1)^{p}\,
  \frac{q!}{(q-p)!}\,
  \vf{j}{(q-p)},
\]
where \(\vf{j}{(q-p)}\) is by convention \(0\) for \(p>q\).
Applied to the equation \(P\) --- that depends on the variables \(\{z_{1},\dotsc,z_{n}\}\) but not on the jet-coordinates \(\{z_{i}^{(p)}\}_{p\geq1}\) --- the vector field \(\vf{j}{(q-p)}\) always vanishes, unless \(q-p=0\), in which case it is \(\vf{j}{(0)}=\\vf{j}{}\).
\begin{corollary}
  \label{cor:Lvfjq}
  The vector field \(\vf{j}{(q)}\) does not satisfy \(\Lambda_{t}(\vf{j}{(q)})=0\), because it produces a non zero entry in the \((q+1)\)-th line:
  \[
    \xym{6,1}{
      \vf{j}{(q)}
      \cdot
      P
      \ar@{.}[ddd]\\
      \\
      \\
      (\mathsf{D}_{t}^{q}\ad\vf{j}{(q)})
      \cdot
      P
      \ar@{.}[dd]\\
      \\
      (\mathsf{D}_{t}^{k}\ad\vf{j}{(q)})
      \cdot
      P
    }
    \equiv
    \xym[.16]{10,1}{
      \hskip 5pt\vf{j}{(q)}\cdot P\hskip5pt\\
      \\\\
      \\
      \\
      q!\,\vf{j}{(0)}\cdot P\ar@{.}[uuuuu] \\
      0\ar@{.}[ddd] \\
      \\\\
      0
    }
    \equiv
    \xym[.21]{10,1}{
      \hskip 15pt 0 \hskip 15pt \\
      \\\\
      \\
      0\ar@{.}[uuuu] \\
      q!\,\vf{j}{}\cdot P \rlap{\quad \(\leftarrow\) line of{} \(\mathsf{D}_{t}^{q}\).} \\
      0\ar@{.}[ddd] \\
      \\\\
      0
    }
  \]
\end{corollary}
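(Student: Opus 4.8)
The plan is to compute $\Lambda_{t}\big(\vf{j}{(q)}\big)$ coordinate by coordinate and to see that exactly one coordinate survives. By the definition \eqref{eq:Lt}, the $(p+1)$-th entry of $\Lambda_{t}\big(\vf{j}{(q)}\big)$, for $p=0,1,\dotsc,k$, is $\big(\mathsf{D}_{t}^{p}\ad\vf{j}{(q)}\big)\cdot P$, so the whole computation rests on the iterated form of Lemma \ref{lem:Dtvf} recorded just above the statement, namely
\[
  \mathsf{D}_{t}^{p}\ad\vf{j}{(q)}
  =
  (-1)^{p}\,\frac{q!}{(q-p)!}\,\vf{j}{(q-p)},
\]
with the convention that $\vf{j}{(q-p)}=0$ whenever $p>q$. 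First I would use this convention to dispose of every entry with $p>q$: it is simply zero.

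Next I would handle the entries with $0\leq p<q$. Here $q-p\geq1$, and since the universal polynomial $P=\sum_{\abs{\alpha}\leq d}a_{\alpha}\,z^{\alpha}$ involves only the base coordinates $z_{1},\dotsc,z_{n}$ and none of the jet coordinates $z_{i}^{(m)}$ with $m\geq1$, the plain vector field $\vf{j}{(q-p)}$ annihilates $P$; so these entries vanish as well — and, when $q\geq1$, the very same remark also kills the zeroth entry $\vf{j}{(q)}\cdot P$. There remains only the entry $p=q$, which equals $(-1)^{q}\,q!\,\vf{j}{(0)}\cdot P=(-1)^{q}\,q!\,\vf{j}{}\cdot P$, that is, up to the irrelevant overall sign, the quantity displayed in the statement. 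Finally I would recall the fundamental-case computation $\vf{j}{}\cdot P=\sum_{\abs{\beta}\leq d-1}a_{\beta+\mathbi{1}_{j}}\,(\beta_{j}+1)\,z^{\beta}$, which is not identically zero, so that $\Lambda_{t}\big(\vf{j}{(q)}\big)=(-1)^{q}\,q!\,\big(\vf{j}{}\cdot P\big)\,\mathbi{e}_{q}\not\equiv0$; by Lemma \ref{lem:VDP-LVP} this means $\vf{j}{(q)}$ is not a section of $\mathcal{T}_{k}$, which is the assertion of Corollary \ref{cor:Lvfjq}.

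There is really no hard part: all of the substance has already been extracted in Lemma \ref{lem:Dtvf}, and the corollary is essentially bookkeeping resting on two elementary facts — the triangular shape of the iterated adjoint action, and the independence of $P$ from the jet coordinates. The only points one must keep straight are the convention $\vf{j}{(q-p)}=0$ for $p>q$ and the fact that the sign $(-1)^{q}$ plays no role in the non-vanishing conclusion.
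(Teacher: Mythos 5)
Your proof is correct and follows exactly the paper's route: iterate Lemma \ref{lem:Dtvf} to get \(\mathsf{D}_{t}^{p}\ad\vf{j}{(q)}=(-1)^{p}\tfrac{q!}{(q-p)!}\,\vf{j}{(q-p)}\) (zero for \(p>q\)) and apply it to \(P\), which depends only on \(z_{1},\dotsc,z_{n}\), so that only the \(p=q\) entry survives; you even track the sign \((-1)^{q}\) that the paper's display silently drops, and rightly note it is immaterial. Only your closing inference that nonvanishing of \(\Lambda_{t}(\vf{j}{(q)})\) on all of \(J_{k}\) yields \(\vf{j}{(q)}\notin\mathcal{T}_{k}\) via Lemma \ref{lem:VDP-LVP} would strictly require checking that \(\vf{j}{}\cdot P\) does not vanish identically on \(\mathcal{Z}_{q}\), but that assertion is not part of the corollary being proved.
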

This situation generalizes the situation of the fundamental case (which is the case where \(q=0\)). 

The remaining difficulty is to compute the adequate compensation part \(\mathsf{U}\) for \(\mathsf{V}=\vf{j}{(q)}\), namely a vector field in the direction of the parameter space such that \(\Lambda_{t}(\mathsf{U})=\vf{j}{}\cdot P\;\mathbi{e}_{q}\), that is to say:
\[
  \text{if }p\neq q\colon\quad
  (\mathsf{D}_{t}^{p}\ad\mathsf{U})\cdot P(\xi)=0
  \qquad
  \text{and}\colon\quad
  (\mathsf{D}_{t}^{q}\ad\mathsf{U})\cdot P(\xi)=\vf{j}{}\cdot P(z).
\]

\subsubsection{Interlude. Origins of P\u{a}un's layout uncovered.}
For a short moment we will only treat the next case \(\mathsf{V}=\vf{j}{(1)}\), that is the case \(q=1\), because it already involves the main arguments of our strategy and also because it is an occasion to revisit the papers of P\u{a}un (\cite{MR2372741}), Rousseau (\cite{MR2331545,MR2383820,MR2552951}) and Merker (\cite{MR2543663}).

We have seen, as a particular case of Corollary \ref{cor:Lvfjq}, that:
\[
  \Lambda_{t}\big(\vf{j}{(1)}\big)
  =
  \bigl(\vf{j}{}\cdot P\bigr)\;\mathbi{e}_{1}.
\]
According to the strategy sketched above, one has to find a corrective vector field \(\mathsf{U}\) with the same image
\(
\Lambda_{t}(\mathsf{U})
=
(\vf{j}{}\cdot P)\;\mathbi{e}_{1}
\).
Inspired by the use of the vector fields \(\vf[a]{\beta}{}\) as building-block vector fields in the fundamental case, for each \(\beta\) with \(\abs{\beta}\leq d-1\), we will first seek for vector fields \(\mathsf{U}_{\scriptscriptstyle(1)}^{\beta}\) such that:
\begin{equation}
  \label{eq:LtU1}
  \Lambda_{t}\big(\mathsf{U}_{\scriptscriptstyle(1)}^{\beta}\big)
  =
  z^{\beta}\;\mathbi{e}_{1}.
\end{equation}
This allows indeed to conclude as in the fundamental case above,
because we have computed:
\[
  \vf{j}{}
  \cdot
  P
  =
  \sum_{\abs{\beta}\leq d-1}
  {\color{black!50}
  \underset{\hfill\scalebox{.7}{\framebox[1.5\width][c]{\color{black!80}$= u_{j,\beta}$}}}
  {\underline{\textcolor{black}{
    a_{\beta+\mathbi{1}_{j}}\,
    (\beta_{j}+1)
  }}}}\,
  z^{\beta},
\]
and thus, setting as expected \(\mathsf{U}=\sum\,u_{j,\beta}\,\mathsf{U}_{\scriptscriptstyle(1)}^{\beta}\), the only line that is not already zero by construction in \(\Lambda_{t}\big(\vf{j}{(1)}-\mathsf{U}\big)\) is cancelled, by linearity of \(\Lambda_{t}\) with respect to the coefficients \(u_{j,\beta}\):
\[
  \Lambda_{t}
  \bigg(
  \vf{j}{(1)}
  -
  \sum_{\abs{\beta}\leq d-1}
  u_{j,\beta}\,
  \mathsf{U}_{\scriptscriptstyle(1)}^{\beta}
  \bigg)
  =
  \bigg(
  \sum_{\abs{\beta}\leq d-1}\!
  u_{j,\beta}\,z^{\beta}
  -
  \sum_{\abs{\beta}\leq d-1}\!
  u_{j,\beta}\,z^{\beta}
  \bigg)\;
  \mathbi{e}_{1}
  =
  0.
\]

Now concerning \eqref{eq:LtU1}, for a fixed \(\beta\), the equation:
\[
  \Lambda_{t}\big(\mathsf{U}_{\scriptscriptstyle(1)}^{\beta}\big)
  \stackrel{\eqref{eq:Lt}}\bydef
  \bigl(\mathsf{U}_{\scriptscriptstyle(1)}^{\beta}\cdot P\bigr)\;\mathbi{e}_{0}+
  \bigl(\,(\mathsf{D}_{t}^{1}\ad\mathsf{U}_{\scriptscriptstyle(1)}^{\beta})\cdot P\,\bigr)\;\mathbi{e}_{1}+
  \dotsb+
  \bigl(\,(\mathsf{D}_{t}^{k}\ad\mathsf{U}_{\scriptscriptstyle(1)}^{\beta})\cdot P\,\bigr)\;\mathbi{e}_{k}
  =
  z^{\beta}\;\mathbi{e}_{1}
\]
is a higher order analog of:
\[
  \Lambda_{t}\big(\vf[a]{\beta}{}\big)
  \stackrel{\eqref{eq:Lt}}\bydef
  \bigl(\vf[a]{\beta}{}\cdot P\bigr)\;\mathbi{e}_{0}+
  \bigl(\,(\mathsf{D}_{t}^{1}\ad\vf[a]{\beta}{})\cdot P\,\bigr)\;\mathbi{e}_{1}+
  \dotsb+
  \bigl(\,(\mathsf{D}_{t}^{k}\ad\vf[a]{\beta}{})\cdot P\,\bigr)\;\mathbi{e}_{k}
  =
  z^{\beta}\,\mathbi{e}_{0}.
\]
A simple idea in order to produce the needed building-block vector field \(\mathsf{U}_{\scriptscriptstyle(1)}^{\beta}\) would hence be that \(\mathsf{U}_{\scriptscriptstyle(1)}^{\beta}\) satisfies only two properties:
\begin{equation}
  \label{eq:U1t}
  \mathsf{U}_{\scriptscriptstyle(1)}^{\beta}\cdot P 
  =
  0
  \qquad\text{and}\qquad
  (\mathsf{D}_{t}^{1}\ad\mathsf{U}_{\scriptscriptstyle(1)}^{\beta})
  =
  \vf[a]{\beta}{};
\end{equation}
indeed, it would then follow that for \(p=1,\dotsc,k\):
\[
  \bigl(\mathsf{D}_{t}^{p}\ad\mathsf{U}_{\scriptscriptstyle(1)}^{\beta}\bigr)\cdot P
  =
  \bigl(\mathsf{D}_{t}^{p-1}\ad(\mathsf{D}_{t}\ad\mathsf{U}_{\scriptscriptstyle(1)}^{\beta})\bigr)\cdot P
  =
  \bigl(\mathsf{D}_{t}^{p-1}\ad\vf[a]{\beta}{}\bigr)\cdot P,
\]
whence, as announced:
\[
    \xy
    \xymatrix"LUU"@R=.5pt@C=.5pt@W=1em@H=2em{
      \makebox[60pt][c]{$\mathsf{U}_{\scriptscriptstyle(1)}^{\beta}\cdot P$}
      &=& 
      0\;\\
      \makebox[60pt][c]{$(\mathsf{D}_{t}\ad \mathsf{U}_{\scriptscriptstyle(1)}^{\beta})\cdot P$}
      &=& 
      z^{\beta}\;\\
      \makebox[60pt][c]{$(\mathsf{D}_{t}^{2}\ad \mathsf{U}_{\scriptscriptstyle(1)}^{\beta})\cdot P$}
      \ar@{.}[dd]&=&\ar@{.}[dd]
      0\;\\
      &&\\
      \makebox[60pt][c]{$(\mathsf{D}_{t}^{k}\ad \mathsf{U}_{\scriptscriptstyle(1)}^{\beta})\cdot P$}
      &=& 
      0\;
    }
    \POS"LUU5,1"."LUU1,3"!C*++\frm{(}*\frm{)},
    \POS"LUU1,1"."LUU5,3"!C+L-<35pt,0pt>*\txt{$
      \Lambda_{t}\big(\mathsf{U}_{\scriptscriptstyle(1)}^{\beta}\big)
      \;=\;
    $}
    \POS(50,-5)
    \xymatrix"LU"@R=.5pt@C=.5pt@W=1em@H=2em{
      \makebox[70pt][c]{$\vf[a]{\beta}{}\cdot P$}
      &=& 
      z^{\beta}\;\\
      \makebox[70pt][c]{$(\mathsf{D}_{t}\ad\vf[a]{\beta}{})\cdot P$}
      \ar@{.}[dd]&=&\ar@{.}[dd]
      0\;\\
      &&\\
      \makebox[70pt][c]{$(\mathsf{D}_{t}^{k-1}\ad\vf[a]{\beta}{})\cdot P$}
      &=& 
      0\;\\
      \makebox[70pt][c]{$(\mathsf{D}_{t}^{k}\ad\vf[a]{\beta}{})\cdot P$}
      &=& 
      0\;
    }
    \POS"LU1,1"."LU5,3"!C*++\frm{(}*\frm{)},
    \POS"LU1,1"."LU5,3"!C+R+<35pt,0pt>*\txt{$
      \;=\;
      \Lambda_{t}\big(\vf[a]{\beta}{}\big).
    $}

    \POS"LU1,1"."LU1,3"!C*-\frm{--},
    \POS"LUU2,1"."LUU2,3"!C*-\frm{--},
    \POS"LU1,1"+L\ar@(l,r)"LUU2,3"+R,
    \POS"LU2,1"."LU2,3"!C*-\frm{--},
    \POS"LUU3,1"."LUU3,3"!C*-\frm{--},
    \POS"LU2,1"+L\ar@(l,r)"LUU3,3"+R,
    \POS"LU4,1"."LU4,3"!C*-\frm{--},
    \POS"LUU5,1"."LUU5,3"!C*-\frm{--},
    \POS"LU4,1"+L\ar@(l,r)"LUU5,3"+R,
  \endxy
\]

As simple as the problem \eqref{eq:U1t} may firstly appear, one has to admit after a moment of reflection that it is not so, because one has to eliminate the jet derivatives that inevitably appear when expanding
\begin{equation}
  \label{eq:recallDt}
  \mathsf{D}_{t}
  =
  \sum_{i=1,\dotsc,n}
  \left(
  \sum_{p=0}^{k-1}
  (p+1)\;
  z_{i}^{(p+1)}\;
  \vf{i}{(p)}
  \right).
\end{equation}
However, our efforts will be repaid: we are about to see how to reap the benefits from the strategy sketched above, using the flexibility in the choice of the generators \(\widetilde{\mathsf{V}}\) of the tangent space.
\begin{enumerate}
  \item
    Notice that by Lemma \ref{cor:DtD1} one can use \(\mathsf{D}_{z_{1}}\) instead of \(\mathsf{D}_{t}\) to define in the exact analogous way the subspace of vertical jets, working without loss of generality on the subset \(\{z_{1}'\neq0\}\) of the set of invertible jets, on which we have to prove global generation of the tangent space, since:
    \[
      \left\{
        \begin{array}{cl}
          P(z)&=0\\
          \mathsf{D}_{t}\cdot P(z)&=0\\
          \vdots\\
          \mathsf{D}_{t}^{k}\cdot P(z)&=0
        \end{array}
      \right.
      \stackrel{\ (z_{1}'\neq0)\ }\Longleftrightarrow
      \left\{
        \begin{array}{cl}
          P(z)&=0\\
          \mathsf{D}_{z_{1}}\cdot P(z)&=0\\
          \vdots\\
          \mathsf{D}_{z_{1}}^{k}\cdot P(z)&=0.
        \end{array}
      \right.
    \]
  \item
    Recall from \S\ref{sse:geometricJetCoordinates} that the use of geometric jet coordinates on \(\{z_{1}'\neq0\}\) simplifies substantially the formal differentiation of jets in the special direction \(z_{1}\);
    we have seen that \(\mathsf{D}_{z_{1}}\) has component in the \(z_{1}\)-direction plainly equal to \(\vf{1}{}\), more precisely:
    \[
      \mathsf{D}_{z_{1}}\vert_{J_{k-1}}
      =
      \vf{1}{}
      +
      \sum_{i=2,\dotsc,n}
      \left(
      \sum_{p=0}^{k-1}
      (p+1)\;
      z_{i}^{[p+1]}\;
      \vf{i}{[p]}
      \right),
    \] 
    \textit{cp.} with the expression \eqref{eq:recallDt} of \(\mathsf{D}_{t}\) just above.
\end{enumerate}
As a consequence of the first point, all what we have done can be do using \(\mathsf{D}_{z_{1}}\) instead of \(\mathsf{D}_{t}\), and as a consequence of the second point, the analog of the problem \eqref{eq:U1t} becomes much simpler (see below).
In other words, the choice of the vertical vector fields \(\mathsf{V}=\vf{i}{(1)}\) was not the second simplest choice, since it is more easy to treat the case where:
\[
  \mathsf{V}
  =
  \vf{i}{[1]}.
\]
This remark explains why these vector fields already appeared in the matricial approach introduced by P\u{a}un and further pushed by Rousseau and Merker (\textit{cf}. \eqref{eq:vfpaun} above for a translation formula).

Restarting and doing all the same reasoning with \(\mathsf{D}_{z_{1}}\) and squared brackets on \(\{z_{1}'\neq0\}\), 
we set:
\[
  \Lambda_{z_{1}}\big(\widetilde{\mathsf{V}}\big)
  \bydef
  \bigl(\widetilde{\mathsf{V}}\cdot P\bigr)\;\mathbi{e}_{0}+
  \bigl(\,(\mathsf{D}_{z_{1}}^{1}\ad\widetilde{\mathsf{V}})\cdot P\,\bigr)\;\mathbi{e}_{1}+
  \dotsb+
  \bigl(\,(\mathsf{D}_{z_{1}}^{k}\ad\widetilde{\mathsf{V}})\cdot P\,\bigr)\;\mathbi{e}_{k},
\]
and we get that (for \(j\neq 1\)) the vector field
\(\vf{j}{[1]}-\mathsf{U}\) is tangent to \(J_{k}^{\mathit{vert}}\) over the set \(\{z_{1}'\neq0\}\) if:
\[
  \Lambda_{z_{1}}(\mathsf{U})
  =
  \Lambda_{z_{1}}(\vf{j}{[1]})
  =
  (\vf{j}{}\cdot P(z))\;\mathbi{e}_{1}.
\]
So we construct building-block vector fields \(\mathsf{U}_{1}^{\beta}\) such that \(\Lambda_{z_{1}}(\mathsf{U}_{1}^{\beta})=z^{\beta}\;\mathbi{e}_{1}\) by solving:
\begin{equation}
  \label{eq:U1}
  \mathsf{U}_{1}^{\beta}\cdot P =0
  \qquad\text{and}\qquad
  \mathsf{D}_{z_{1}}\ad\mathsf{U}_{1}^{\beta}
  =
  \vf[a]{\beta}{},
\end{equation}
but now, using the direction \(z_{1}\), the problem becomes essentially trivial since for \(\abs{\beta}\leq d-1\) a simple solution to \eqref{eq:U1} is to take:
\begin{equation}
  \mathsf{U}_{1}^{\beta}
  \bydef
  z_{1}\,\vf[a]{\beta}{}
  -
  \vf[a]{\beta+\mathbi{1}_{1}}{}.
\end{equation}
and finally we obtain a vector field tangent to the vertical jets with vertical part \(\vf{i}{[1]}\) in exactly the same way as in the fundamental case, since:
\[
  \Lambda_{z_{1}}
  \bigg(
  \vf{j}{[1]}
  -
  \sum_{\abs{\beta}\leq d-1}
  u_{j,\beta}\,
  \mathsf{U}_{1}^{\beta}
  \bigg)
  =
  \left(
  \sum_{\abs{\beta}\leq d-1}\!
  u_{j,\beta}\,z^{\beta}
  -
  \sum_{\abs{\beta}\leq d-1}\!
  u_{j,\beta}\,z^{\beta}
  \right)\;
  \mathbi{e}_{1}
  =
  0.
\]

\subsubsection{Building-block vector fields.}
Recall that on the set \(\{z_{1}'\neq0\}\) the vector field
\(
\widetilde{\mathsf{V}}
=
\mathsf{V}
-
\mathsf{U}
\)
is a section of \(\mathcal{T}_{k}\) if and only if it satisfies the \(k+1\) conditions, for \(p=0,1,\dotsc,k\):
\[
  \bigg(
  P(\xi)=\mathsf{D}_{z_{1}}\cdot P(\xi)=\dotsb=\mathsf{D}_{z_{1}}^{p}\cdot P(\xi)=0
  \bigg)
  \Rightarrow
  \bigg(
  \Lambda_{z_{1}}
  (
  \mathsf{V}
  )
  (\xi)
  =
  \Lambda_{z_{1}}(
  \mathsf{U}
  )
  (\xi)
  \bigg).
\]
But in the compact case, we will not need the left assumption to obtain the right conclusion.

When working on \(\{z_{1}'\neq0\}\), 
we demand that \(\widehat{\alpha}_{1}=0\), for technical reasons, and we fix \(\widehat\alpha\) once for all.

Since by Corollary \ref{cor:Lvfjq} the vector field associated to the geometric jet coordinate \(z_{j}^{[q]}\) satisfy:
\begin{equation}
  \label{eq:D1VF}
  \Lambda_{z_{1}}(
  \vf{j}{[q]}
  )
  \equiv
  \bigl(\vf{j}{}\cdot P\bigr)\,\mathbi{e}_{q},
\end{equation}
according to the strategy sketched out above, it needs to be corrected by an adequate linear combination of building-block vector fields \(\mathsf{U}\) such that
\[
  \Lambda_{z_{1}}(
  \mathsf{U}
  )
  \equiv
  z^{\beta}\,\mathbi{e}_{q}.
\]

The iteration of the inductive step \eqref{eq:U1t} allows to produce most of the needed building-block vector fields:
\begin{lemma}
  \label{lem:Uq}
  For \(q\in\N\) and \(\beta\) with \(\abs{\beta}+q\leq d\) the vector field:
  \[
    \mathsf{U}_{q}^{\beta}
    \bydef
    \sum_{p=0}^{q}
    \frac{(-1)^{p}}{p!(q-p)!}\,
    z_{1}^{q-p}\;
    \vf[a]{\beta+p\,\mathbi{1}_{1}}{}
  \]
  is a solution to
  \(
  \Lambda_{z_{1}}(
  \mathsf{U}
  )
  \equiv
  z^{\beta}\,\mathbi{e}_{q}
  \),
  where implicitly \(\mathbi{e}_{q}=\mathbi{0}\) if \(q>k\).
\end{lemma}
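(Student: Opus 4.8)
The plan is to deduce the whole statement from three elementary facts about the family $(\mathsf{U}_q^\beta)_{q\geq0}$, after which the conclusion falls out of the iterated adjoint structure already built into \eqref{eq:Lt} and into the inductive step \eqref{eq:U1t}. The three facts are:
$\textit{(i)}$ $\mathsf{U}_0^\beta=\vf[a]{\beta}{}$, and more generally $\mathsf{D}_{z_{1}}\ad\vf[a]{\gamma}{}=0$ for every multi-index $\gamma$;
$\textit{(ii)}$ $\mathsf{D}_{z_{1}}\ad\mathsf{U}_q^\beta=\mathsf{U}_{q-1}^\beta$ for every $q\geq1$;
$\textit{(iii)}$ $\mathsf{U}_q^\beta\cdot P=z^\beta$ if $q=0$ and $\mathsf{U}_q^\beta\cdot P=0$ if $q\geq1$.
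Granting these, one argues as follows. The $\mathbi{e}_0$-slot of $\Lambda_{z_{1}}(\mathsf{U}_q^\beta)$ is $\mathsf{U}_q^\beta\cdot P$, which by $\textit{(iii)}$ is $0$ unless $q=0$. For $1\leq p\leq k$ the $\mathbi{e}_p$-slot is $(\mathsf{D}_{z_{1}}^p\ad\mathsf{U}_q^\beta)\cdot P=\bigl((\mathrm{ad}\,\mathsf{D}_{z_{1}})^p\,\mathsf{U}_q^\beta\bigr)\cdot P$, and iterating $\textit{(ii)}$ gives $(\mathrm{ad}\,\mathsf{D}_{z_{1}})^p\,\mathsf{U}_q^\beta=\mathsf{U}_{q-p}^\beta$ when $p\leq q$ and $(\mathrm{ad}\,\mathsf{D}_{z_{1}})^p\,\mathsf{U}_q^\beta=(\mathrm{ad}\,\mathsf{D}_{z_{1}})^{p-q}\,\vf[a]{\beta}{}=0$ when $p>q$ (using $\mathsf{D}_{z_{1}}\ad\vf[a]{\beta}{}=0$ from $\textit{(i)}$). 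Applying $\cdot\,P$ and $\textit{(iii)}$, this slot is $z^\beta$ exactly when $p=q$ and is $0$ otherwise. Hence $\Lambda_{z_{1}}(\mathsf{U}_q^\beta)=z^\beta\,\mathbi{e}_q$ for $1\leq q\leq k$; for $q=0$ the $\mathbi{e}_0$-slot contributes $z^\beta$ instead; and for $q>k$ every slot $0\leq p\leq k$ vanishes, so $\Lambda_{z_{1}}(\mathsf{U}_q^\beta)=\mathbi{0}=z^\beta\,\mathbi{e}_q$.

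It remains to check $\textit{(i)}$, $\textit{(ii)}$, $\textit{(iii)}$. Fact $\textit{(i)}$: only the term $p=0$ survives in the definition of $\mathsf{U}_0^\beta$. For the vanishing $\mathsf{D}_{z_{1}}\ad\vf[a]{\gamma}{}=-\,\vf[a]{\gamma}{}\ad\mathsf{D}_{z_{1}}$, note that all coefficients of $\mathsf{D}_{z_{1}}=\tfrac1{z_1'}\mathsf{D}_{t}$ are functions of the jet coordinates of $\C^n$ only — they do not involve the parameters $a_\gamma$ — while the coordinate fields $\vf[a]{\gamma}{}$ and $\vf{i}{(p)}$ commute; so the Leibniz rule \eqref{eq:leibniz} makes $\vf[a]{\gamma}{}\ad\mathsf{D}_{z_{1}}$ vanish term by term.

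Fact $\textit{(ii)}$: apply \eqref{eq:leibniz} termwise to the definition of $\mathsf{U}_q^\beta$,
\[
\mathsf{D}_{z_{1}}\ad\bigl(z_1^{q-p}\,\vf[a]{\beta+p\mathbi{1}_{1}}{}\bigr)
=\bigl(\mathsf{D}_{z_{1}}\cdot z_1^{q-p}\bigr)\,\vf[a]{\beta+p\mathbi{1}_{1}}{}
+z_1^{q-p}\,\bigl(\mathsf{D}_{z_{1}}\ad\vf[a]{\beta+p\mathbi{1}_{1}}{}\bigr).
\]
The second summand vanishes by $\textit{(i)}$; since $z_1^{q-p}$ is a function of $z_1$ alone and $\mathsf{D}_{t}\cdot z_1=z_1'$, one gets $\mathsf{D}_{z_{1}}\cdot z_1^{q-p}=(q-p)\,z_1^{q-p-1}$, so the $p=q$ term drops and
\[
\mathsf{D}_{z_{1}}\ad\mathsf{U}_q^\beta
=\sum_{p=0}^{q-1}\frac{(-1)^p}{p!\,(q-1-p)!}\,z_1^{\,q-1-p}\,\vf[a]{\beta+p\mathbi{1}_{1}}{}
=\mathsf{U}_{q-1}^\beta.
\]
Fact $\textit{(iii)}$: since $\vf[a]{\beta+p\mathbi{1}_{1}}{}\cdot P=z^{\beta+p\mathbi{1}_{1}}=z_1^{\,p}\,z^\beta$ — the coefficient $a_{\beta+p\mathbi{1}_{1}}$ genuinely occurring because $\lvert\beta+p\mathbi{1}_{1}\rvert=\lvert\beta\rvert+p\leq d$, and it is not the constant $a_{\widehat\alpha}$ since $\widehat\alpha_1=0$ forces $\beta+p\mathbi{1}_{1}=\widehat\alpha$ only for $p=0$, $\beta=\widehat\alpha$, excluded here when $q\geq1$ — we obtain
\[
\mathsf{U}_q^\beta\cdot P
=\sum_{p=0}^{q}\frac{(-1)^p}{p!\,(q-p)!}\,z_1^{q-p}\cdot z_1^{\,p}\,z^\beta
=\frac{z_1^{q}\,z^\beta}{q!}\sum_{p=0}^{q}\binom{q}{p}(-1)^p
=\frac{z_1^{q}\,z^\beta}{q!}\,(1-1)^q,
\]
which is $z^\beta$ for $q=0$ and $0$ for $q\geq1$.

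The only real point of care is bookkeeping: once the two elementary observations $\mathsf{D}_{z_{1}}\ad\vf[a]{\gamma}{}=0$ and $\mathsf{D}_{z_{1}}\cdot z_1^{m}=m\,z_1^{m-1}$ are in hand, what remains is the telescoping of $\textit{(ii)}$ and the binomial collapse in $\textit{(iii)}$; the conceptual work was already spent in passing to the distinguished direction $z_1$ and in setting up the inductive scheme \eqref{eq:U1t}.
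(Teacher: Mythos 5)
Your proof is correct and rests on the same mechanism as the paper's: the shift identity $\mathsf{D}_{z_{1}}\ad\mathsf{U}_{q}^{\beta}=\mathsf{U}_{q-1}^{\beta}$ together with $\mathsf{U}_{q}^{\beta}\cdot P=0$ for $q\geq1$, followed by iterating the adjoint. The only difference is cosmetic — the paper obtains these two facts by an upward induction through the recursion $z_{1}\,\mathsf{U}_{q}^{\beta}-\mathsf{U}_{q}^{\beta+\mathbi{1}_{1}}$ (stated there without its normalizing factor $q+1$), whereas you verify them directly by termwise differentiation and the binomial collapse, which is if anything more explicit and also handles the $\widehat{\alpha}$ edge case.
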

\begin{proof}
  One checks the inductive formula:
  \[
    \mathsf{U}_{q+1}^{\beta}
    =
    z_{1}\,\mathsf{U}_{q}^{\beta}
    -
    \mathsf{U}_{q}^{\beta+\mathbi{1}_{1}},
  \]
  whence, making an induction on \(q\), if \(\Lambda_{z_{1}}(\mathsf{U}_{q}^{\beta})=z^{\beta}\,\mathbi{e}_{q}\), the vector field \(\mathsf{U}_{q+1}^{\beta}\) clearly satisfies
  \(
  \mathsf{U}_{q+1}^{\beta}\cdot P
  \equiv
  0
  \)
  and
  \(
  \big(
  \mathsf{D}_{z_{_1}}
  \ad
  \mathsf{U}_{q+1}^{\beta}
  \big)
  =
  \mathsf{U}_{q}^{\beta}
  \),
  which allows to conclude as above, by a shift.
\end{proof}

The vector fields obtained in this way are certainly not enough to correct \eqref{eq:D1VF}, because of the technical limitation to exponents \(\beta\) with \(\abs{\beta}\leq d-q\).
But actually, we will overcome this limitation, 

\begin{lemma}
  \label{lem:moreUb}
  For any \(\beta\in\N^{n}\), their exists a vector field
  with coefficients in \(\C[z_{1},\dotsc,z_{n}]\):
  \[
    \mathsf{U}_{0}^{\beta}
    \bydef
    \sum_{\gamma\leq\beta}
    {\propto}_{\beta,\gamma}(z)\,
    \vf[a]{\gamma}{},
  \] 
  having degree at most \(k+\abs{\beta}-d\), such that:
  \[
    \Lambda_{z_{1}}(
    \mathsf{U}_{0}^{\beta}
    )
    \equiv
    z^{\beta}\,\mathbi{e}_{0}.
  \]
\end{lemma}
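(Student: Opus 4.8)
The plan is to reduce the statement to an elementary one–variable interpolation problem, exploiting the very simple behaviour of \(\mathsf{D}_{z_{1}}\) in the special direction \(z_{1}\). Since the coefficients of \(\mathsf{D}_{z_{1}}\) depend only on the \(z\)–variables and on the geometric jet coordinates, \textit{never} on the parameters \(a\), one has \(\mathsf{D}_{z_{1}}\ad\vf[a]{\gamma}{}=0\) for every multi-index \(\gamma\); hence, by the Leibniz rule \eqref{eq:leibniz}, any vector field \(\mathsf{U}=\sum_{\gamma}c_{\gamma}(z)\,\vf[a]{\gamma}{}\) with \(z\)–polynomial coefficients satisfies \(\mathsf{D}_{z_{1}}^{q}\ad\mathsf{U}=\sum_{\gamma}(\mathsf{D}_{z_{1}}^{q}\cdot c_{\gamma})\,\vf[a]{\gamma}{}\), and therefore, using \(\vf[a]{\gamma}{}\cdot P=z^{\gamma}\),
\[
  \Lambda_{z_{1}}(\mathsf{U})
  =
  \sum_{q=0}^{k}\Bigl(\textstyle\sum_{\gamma}(\mathsf{D}_{z_{1}}^{q}\cdot c_{\gamma})(z)\;z^{\gamma}\Bigr)\,\mathbi{e}_{q}.
\]
Thus producing \(\mathsf{U}_{0}^{\beta}=\sum_{\gamma\leq\beta}{\propto}_{\beta,\gamma}(z)\,\vf[a]{\gamma}{}\) with \(\Lambda_{z_{1}}(\mathsf{U}_{0}^{\beta})\equiv z^{\beta}\,\mathbi{e}_{0}\) amounts to finding \(z\)–polynomials \({\propto}_{\beta,\gamma}\) (indexed by \(\gamma\leq\beta\), \(\abs{\gamma}\leq d\)) such that \(\sum_{\gamma}{\propto}_{\beta,\gamma}(z)\,z^{\gamma}=z^{\beta}\) and \(\sum_{\gamma}(\mathsf{D}_{z_{1}}^{q}\cdot{\propto}_{\beta,\gamma})(z)\,z^{\gamma}=0\) for \(q=1,\dotsc,k\).

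If \(\abs{\beta}\leq d\) one simply takes \(\mathsf{U}_{0}^{\beta}\bydef\vf[a]{\beta}{}\) (degree \(0\)), with an evident modification in the exceptional case \(\beta=\widehat\alpha\). The main case is \(\abs{\beta}>d\); set \(m\bydef\abs{\beta}-d\geq 1\), and suppose first that \(\beta_{1}\geq m+k\). The point is that, since \(\mathsf{D}_{z_{1}}\cdot z_{1}=1\) (Lemma \ref{lem:geometric.jets.Dz}), \(\mathsf{D}_{z_{1}}\) acts on polynomials in \(z_{1}\) alone as the ordinary derivation \(\partial_{z_{1}}\), so \(\mathsf{D}_{z_{1}}^{q}\cdot z_{1}^{\,j}=j^{\underline{q}}\,z_{1}^{\,j-q}\) is a monomial involving \emph{no} jet coordinate, where \(j^{\underline{q}}\bydef j(j-1)\dotsm(j-q+1)\). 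Let \(\ell_{m},\dotsc,\ell_{m+k}\in\C\) be the unique weights with \(\sum_{j=m}^{m+k}\ell_{j}\,Q(j)=Q(0)\) for every \(Q\in\C[X]\) of degree \(\leq k\) (Lagrange interpolation at the \(k+1\) distinct nodes \(m,\dotsc,m+k\), none equal to \(0\) since \(m\geq1\)), and put
\[
  \mathsf{U}_{0}^{\beta}
  \bydef
  \sum_{j=m}^{m+k}\ell_{j}\,z_{1}^{\,j}\;\vf[a]{\beta-j\,\mathbi{1}_{1}}{},
\]
which is legitimate because \(j\leq\beta_{1}\) gives \(\beta-j\,\mathbi{1}_{1}\geq 0\) and \(j\geq m\) gives \(\abs{\beta-j\,\mathbi{1}_{1}}\leq d\). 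Writing \(z^{\beta}=z_{1}^{\,\beta_{1}}\tilde{z}\) with \(\tilde{z}\bydef z_{2}^{\,\beta_{2}}\dotsm z_{n}^{\,\beta_{n}}\), one has \(z_{1}^{\,j}z^{\beta-j\mathbi{1}_{1}}=z^{\beta}\) and \((\mathsf{D}_{z_{1}}^{q}\cdot z_{1}^{\,j})\,z^{\beta-j\mathbi{1}_{1}}=j^{\underline{q}}\,z_{1}^{\,\beta_{1}-q}\tilde{z}\), so the two families of identities above reduce to \(\sum_{j}\ell_{j}=1\) and \(\sum_{j}\ell_{j}\,j^{\underline{q}}=0\) \((q=1,\dotsc,k)\) — which is exactly the defining property of the \(\ell_{j}\), since the \(j^{\underline{q}}\) form a basis of \(\C[X]_{\leq k}\) and \(j^{\underline{q}}\) vanishes at \(j=0\) for \(q\geq1\). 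Hence \(\Lambda_{z_{1}}(\mathsf{U}_{0}^{\beta})\equiv z^{\beta}\,\mathbi{e}_{0}\), and \(\mathsf{U}_{0}^{\beta}\) has degree exactly \(m+k=k+\abs{\beta}-d\). This already settles every \(\beta\) with \(\beta_{2}+\dotsb+\beta_{n}\leq d-k\).

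For an arbitrary \(\beta\) with \(\abs{\beta}>d\), I would induct on \(\abs{\beta}\), reducing to the previous situation by peeling off one variable: choose \(i\geq 2\) with \(\beta_{i}\geq1\), use the induction hypothesis to get \(\mathsf{U}_{0}^{\beta-\mathbi{1}_{i}}\) of degree \(\leq k+\abs{\beta}-1-d\) with \(\Lambda_{z_{1}}(\mathsf{U}_{0}^{\beta-\mathbi{1}_{i}})\equiv z^{\beta-\mathbi{1}_{i}}\,\mathbi{e}_{0}\), and consider \(z_{i}\,\mathsf{U}_{0}^{\beta-\mathbi{1}_{i}}\), of degree \(\leq k+\abs{\beta}-d\). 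Using \(\mathsf{D}_{z_{1}}^{r}\cdot z_{i}=r!\,z_{i}^{[r]}\) (Lemma \ref{lem:geometric.jets.Dz}) and the Leibniz rule one computes
\[
  \Lambda_{z_{1}}\bigl(z_{i}\,\mathsf{U}_{0}^{\beta-\mathbi{1}_{i}}\bigr)
  =
  z^{\beta}\,\mathbi{e}_{0}
  +
  \sum_{p=1}^{k}p!\,z_{i}^{[p]}\,z^{\beta-\mathbi{1}_{i}}\;\mathbi{e}_{p},
\]
so it remains to subtract \(z\)–polynomial building blocks cancelling the \(k\) spurious entries \(p!\,z_{i}^{[p]}z^{\beta-\mathbi{1}_{i}}\) in positions \(\mathbi{e}_{1},\dotsc,\mathbi{e}_{k}\). \emph{This last step is the main obstacle}: unlike in the fundamental case, these errors carry a jet–monomial prefactor \(z_{i}^{[p]}\), so none of them is killed by a single building block of the form above; the idea is to remove them one order at a time, starting from \(\mathbi{e}_{1}\), each correction lowering the jet weight — so that the procedure terminates after finitely many steps — while keeping \(\gamma\leq\beta\) and the degree bounded by \(k+\abs{\beta}-d\). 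Carrying out this (notationally heavy, but mechanical) bookkeeping with the same interpolation device completes the induction and the proof.
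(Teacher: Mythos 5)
Your construction for the favourable case \(\beta_{1}\geq k+\abs{\beta}-d\) is correct and quite clean: since \(\mathsf{D}_{z_{1}}\) commutes with every \(\vf[a]{\gamma}{}\) and acts on \(\C[z_{1}]\) as \(\partial_{z_{1}}\), the Lagrange/falling-factorial identity \(\sum_{j}\ell_{j}\,j^{\underline{q}}=\delta_{q,0}\) does exactly what you claim. This is in fact a one-variable avatar of the identity the paper imports from Merker (Proposition \ref{prop:higher_length}): your field is essentially \(\mathsf{T}_{\beta,\lambda}\) with \(\lambda\) supported on the first coordinate, rewritten with interpolation nodes \(m,\dotsc,m+k\) instead of a \((k+1)\)-fold finite difference.

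The genuine gap is the reduction of the general case to that one. Your induction step ``multiply \(\mathsf{U}_{0}^{\beta-\mathbi{1}_{i}}\) by \(z_{i}\) and correct'' produces, as you correctly compute, spurious entries \(p!\,z_{i}^{[p]}\,z^{\beta-\mathbi{1}_{i}}\,\mathbi{e}_{p}\). These are \emph{not} of the form \(z^{\gamma}\mathbi{e}_{q}\): they carry a jet-coordinate factor \(z_{i}^{[p]}\), so no building block \(\mathsf{U}_{q}^{\gamma}\) (nor any of your interpolation fields) cancels them, and multiplying a building block by \(z_{i}^{[p]}\) is forbidden because the lemma requires coefficients in \(\C[z_{1},\dotsc,z_{n}]\). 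Moreover the proposed termination mechanism (``each correction lowering the jet weight'') is backwards: applying \(\mathsf{D}_{z_{1}}\) to a jet coordinate \emph{raises} its order (\(\mathsf{D}_{z_{1}}\cdot z_{i}^{[p]}\propto z_{i}^{[p+1]}\)), so there is no evident decreasing quantity, and each attempted correction will generically create new error terms. This unresolved step is precisely where the content of the lemma lies. The paper sidesteps it entirely: since \(\abs{\beta}>d\geq k+1\) one can always choose a \emph{multivariate} \(\lambda\leq\beta\) with \(\abs{\lambda}=k+1\), and Merker's identity for \(\mathsf{T}_{\beta,\lambda}=\sum_{\gamma\leq\lambda}(-1)^{\abs{\gamma}}\tfrac{\lambda!}{\gamma!(\lambda-\gamma)!}z^{\gamma}\vf[a]{\beta-\gamma}{}\) expresses the missing ``\(\vf[a]{\beta}{}\)'' as a combination of \(z^{\gamma}\,\mathsf{U}_{0}^{\beta-\gamma}\) with \(\abs{\beta-\gamma}<\abs{\beta}\), allowing a clean induction on \(\abs{\beta}-d\) with the stated degree bound. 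To repair your argument you would need to prove that multivariate finite-difference identity (or an equivalent), not just its one-variable specialization.
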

\begin{proof}
  In order to construct such vector fields, we will extensively use the following result of Merker, that can be thought of as a much more general analog of the formula
  \(
  \Lambda_{z_{1}}(
  \mathsf{U}_{k+1}^{\beta}
  )
  \equiv
  0
  \):
  \begin{proposition}[Merker~\cite{MR2543663}]
    \label{prop:higher_length}
    For \(\beta\in\N^{n}\) with \(k+1\leq\abs{\beta}\leq d\), fix a multi-index \(\lambda\leq\beta\) with length \(\abs{\lambda}=k+1\). Then, the vector field defined by:
    \[
      \mathsf{T}_{\beta,\lambda}
      \bydef
      \sum_{\gamma\leq\lambda}
      (-1)^{\abs{\gamma}}
      \,
      \frac{\lambda!}{\gamma!(\lambda-\gamma)!}
      \,
      z^{\gamma}
      \,
      \vf[a]{\beta-\gamma}{}.
    \]
    satisfies
    \(
    \mathsf{T}_{\beta,\lambda}
    \cdot
    (\mathsf{D}_{t}^{p}\cdot P)
    \equiv
    0
    \),
    for \(p=0,1,\dotsc,k\),
    identically on \(J_{k}\).
  \end{proposition}
  It is easily reformulated as follows. Take \(\beta,\lambda\in\N^{n}\) with \(\abs{\lambda}=k+1\) such that \(\mathsf{U}_{0}^{\beta-\gamma}\) is defined for any \(\gamma\leq\lambda\), then:
  \[
    \label{eq:merker}
    \mathsf{U}
    \bydef
    \sum_{0<\gamma\leq\lambda}
    (-1)^{\abs{\gamma}}
    \,
    \frac{\lambda!}{(\lambda-\gamma)!\gamma!}
    \,
    z^{\gamma}
    \,
    \mathsf{U}_{0}^{\beta-\gamma}
    \tag{$\ast$}
  \]
  behaves as the non existing vector field ``\(\vf[a]{\beta}{}\)'' up to order \(k\), namely it satisfies:
  \[
    \Lambda_{z_{1}}
    (\mathsf{U})
    \equiv
    z^{\beta}\,
    \mathbi{e}_{0},
  \]
  identically on \(J_{k}\).

  Thus, reasoning by induction on \(\abs{\beta}-d=1,2,\dotsc\), we can construct recursively the vector fields \(\mathsf{U}_{0}^{\beta}\) using \eqref{eq:merker} at each step.
  This construction is not unique, but in all cases the degree is less than \(k+\abs{\beta}-d\), again by induction.
\end{proof}

So, we can extend the result of Lemma \ref{lem:Uq} to all \(\beta\), with no restriction on \(\abs{\beta}-d\).
Then, we deduce as above the following.
\begin{corollary}
  \label{cor:Tj}
  In the slanted directions, for \(j=2,\dotsc,n\) and \(q=0,1,\dotsc,k\),
  the following corrected vector field is tangent to \(J_{k}^{vert}\):
  \[
    \mathsf{T}_{j,q}
    \bydef
    \Big(
    \vf{j}{[q]}
    -
    \hskip-2pt
    \sum_{\abs{\beta}\leq d-1}
    (\beta_{j}+1)\,
    a_{\beta+\mathbi{1}_{j}}\,
    \mathsf{U}_{q}^{\beta}
    \Big).
  \]
\end{corollary}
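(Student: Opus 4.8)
The plan is to reduce the claim, over the chart $\{z_1'\neq0\}$ of invertible jets, to the single identity $\Lambda_{z_1}(\mathsf{T}_{j,q})\equiv0$ on $J_k$, and then to invoke Lemma~\ref{lem:VDP-LVP}. Since the triangular system \eqref{cor:DtD1} is invertible on $\{z_1'\neq0\}$, there one may equivalently use $\mathsf{D}_{z_1}$ in place of $\mathsf{D}_t$ to cut out the vertical jets; Lemma~\ref{lem:VDP-LVP} (with $\mathsf{D}_t$ replaced by $\mathsf{D}_{z_1}$, as already used in the paragraph on building-block vector fields) then guarantees that $\mathsf{T}_{j,q}$ is a section of $\mathcal{T}_k$ over $\{z_1'\neq0\}$ --- hence in particular tangent to $J_k^{\mathit{vert}}=\mathcal{Z}_k$ --- once $\Lambda_{z_1}(\mathsf{T}_{j,q})$ is a lower triangular combination of $P,\mathsf{D}_{z_1}\cdot P,\dotsc,\mathsf{D}_{z_1}^{k}\cdot P$; in the compact case treated here it will even be the zero combination. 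Working only over $\{z_1'\neq0\}$ costs nothing, since global generation must be established solely at invertible jets, and the $z_1'$-poles carried by $\vf{j}{[q]}$, hence by $\mathsf{T}_{j,q}$, are exactly what makes the field ``slanted''; tangency still makes sense over this chart.

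I would then carry out a short computation. By \eqref{eq:D1VF} --- the $\mathsf{D}_{z_1}$-analog of Corollary~\ref{cor:Lvfjq} --- the vector field attached to the geometric jet coordinate $z_j^{[q]}$ satisfies $\Lambda_{z_1}(\vf{j}{[q]})\equiv(\vf{j}{}\cdot P)\,\mathbi{e}_q$, while, exactly as in the fundamental case,
\[
  \vf{j}{}\cdot P
  =\sum_{\abs{\alpha}\leq d}a_{\alpha}\,\alpha_{j}\,z^{\alpha-\mathbi{1}_j}
  =\sum_{\abs{\beta}\leq d-1}(\beta_j+1)\,a_{\beta+\mathbi{1}_j}\,z^{\beta}.
\]
Next, Lemma~\ref{lem:Uq} together with Lemma~\ref{lem:moreUb} and the inductive step $\mathsf{U}_{q+1}^{\beta}=z_1\mathsf{U}_{q}^{\beta}-\mathsf{U}_{q}^{\beta+\mathbi{1}_1}$ from the proof of Lemma~\ref{lem:Uq} provide, for every $q=0,\dotsc,k$ and \emph{every} $\beta\in\N^n$ --- in particular for all the $\beta$ with $\abs{\beta}\leq d-1$ occurring in $\mathsf{T}_{j,q}$, where one may well have $\abs{\beta}+q>d$ --- a vector field $\mathsf{U}_q^{\beta}$ with coefficients in $\C[z_1,\dotsc,z_n]$ and $\Lambda_{z_1}(\mathsf{U}_q^{\beta})\equiv z^{\beta}\,\mathbi{e}_q$ identically on $J_k$.

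Finally I would use that $\Lambda_{z_1}$ is linear with respect to the coefficients of $P$: if $u=u(a_\alpha)$ depends on the $a_\alpha$ only, then $\mathsf{D}_{z_1}\cdot u=0$, because $\mathsf{D}_{z_1}=\tfrac1{z_1'}\mathsf{D}_t$ carries no $\partial_{a_\alpha}$, so the Leibniz rule \eqref{eq:leibniz} gives $\mathsf{D}_{z_1}^{p}\ad(u\,\mathsf{U})=u\,(\mathsf{D}_{z_1}^{p}\ad\mathsf{U})$ for all $p$, whence $\Lambda_{z_1}(u\,\mathsf{U})=u\,\Lambda_{z_1}(\mathsf{U})$. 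Taking $u=(\beta_j+1)\,a_{\beta+\mathbi{1}_j}$, summing over $\abs{\beta}\leq d-1$, and substituting the expression of $\vf{j}{}\cdot P$ above,
\[
  \Lambda_{z_1}(\mathsf{T}_{j,q})
  =\big(\vf{j}{}\cdot P\big)\,\mathbi{e}_q
  -\sum_{\abs{\beta}\leq d-1}(\beta_j+1)\,a_{\beta+\mathbi{1}_j}\,z^{\beta}\,\mathbi{e}_q
  =0
\]
identically on $J_k$, and Lemma~\ref{lem:VDP-LVP} closes the argument. As a byproduct, since each $\mathsf{U}_q^{\beta}$ is polynomial in $z$, the only poles of $\mathsf{T}_{j,q}$ are those of $\vf{j}{[q]}$ in $z_1'$, which will be relevant for the pole-order estimates of \S3.

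I do not anticipate a real obstacle here: the genuinely hard ingredients --- the building-block Lemmas~\ref{lem:Uq} and~\ref{lem:moreUb}, and behind the latter Merker's Proposition~\ref{prop:higher_length} --- are already in hand, and what remains is bookkeeping. The two points that will need care are (i) that the coefficients $(\beta_j+1)a_{\beta+\mathbi{1}_j}$ reproduce $\vf{j}{}\cdot P$ \emph{exactly}, which is the combinatorial identity displayed above, and (ii) the pulling of these coefficients out of $\Lambda_{z_1}$, i.e.\ its linearity over functions of the $a_\alpha$ alone. Point (ii) is the subtler one, being the precise analog of the observation ``the coefficients do not depend on the $z$-variables, hence the actions of $\mathsf{D}_t$ and of $\widetilde{\mathsf{V}}$ commute'' that made the fundamental case work --- and here it holds exactly because $\mathsf{D}_{z_1}$ contains no derivation along the parameter coordinates.
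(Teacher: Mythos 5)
Your argument is correct and is essentially the paper's own: the paper proves this corollary by the phrase ``we deduce as above,'' referring to the interlude's treatment of the case \(q=1\), and your proposal is exactly that argument carried out for general \(q\) --- reduce to \(\Lambda_{z_{1}}(\mathsf{T}_{j,q})\equiv 0\) via \eqref{eq:D1VF}, the identity \(\vf{j}{}\cdot P=\sum_{\abs{\beta}\leq d-1}(\beta_{j}+1)a_{\beta+\mathbi{1}_{j}}z^{\beta}\), the building-block Lemmas \ref{lem:Uq} and \ref{lem:moreUb}, and the linearity of \(\Lambda_{z_{1}}\) over functions of the \(a_{\alpha}\) alone, then conclude with Lemma \ref{lem:VDP-LVP} in its \(\mathsf{D}_{z_{1}}\)-form. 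Your explicit justification of that linearity (no \(\partial_{a_{\alpha}}\) in \(\mathsf{D}_{z_{1}}\), hence \(\mathsf{D}_{z_{1}}^{p}\ad(u\,\mathsf{U})=u\,(\mathsf{D}_{z_{1}}^{p}\ad\mathsf{U})\)) is a welcome amplification of what the paper leaves implicit; the only detail you omit, which the paper also defers to a remark after the corollary, is the check that the non-existing field \(\vf[a]{\widehat{\alpha}}{}\) is never invoked, thanks to \(\abs{\widehat{\alpha}}=d\) and \(\widehat{\alpha}_{1}=0\).
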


\subsubsection{Direction of the space of parameters.}
In the direction of the space of parameters, for \(k+1\leq\abs{\beta}\leq d\), any of the vector fields provided by Merker, of the form
\[
  \mathsf{T}_{\beta}
  \bydef
  \vf[a]{\beta}{}
  -
  \sum_{\gamma<\beta}
  {\propto}_{\beta,\gamma}(z)\,
  \vf[a]{\gamma}{},
\] 
is tangent to vertical jets, and for the remaining multi-indices, of length \(\abs{\beta}\leq k\), the following vector fields suits to our purposes:
\[
  \mathsf{T}_{\beta}
  \bydef
  (z_{1}')^{2k-1}
  \Big(
  \vf[a]{\beta}{}
  -
  \sum_{q=0}^{k}
  \big(\mathsf{D}_{z_{1}}^{q}\cdot z^{\beta}\big)\,
  \mathsf{U}_{q}^{\mathbf{0}}
  \Big).
\]
It is obtained by solving the triangular system satisfied by \({\propto}_{0},\dotsc,{\propto}_{k}\) for the vector field
\(
\vf[a]{\beta}{}
-
\sum_{q=0}^{k}
{\propto}_{q}\,
\mathsf{U}_{q}^{\mathbf{0}}
\)
to be a section of \(\mathcal{T}_{k}\), and then by multiplying by the adequate monomial in \(z_{1}'\) to compensate the poles in the fiber.

Since the first family is triangular, and since the corrective parts of the second family are elements of the vector space spanned by \(\vf[a]{\mathbf{0}}{},\vf[a]{\mathbf{1}_{1}}{},\dotsc,\vf[a]{k\,\mathbf{1}_{1}}{}\), the collection of these vector fields span a vector space of codimension \(k+1\) in the direction of the space of parameters.

\section{Low Pole Order Frames on Vertical Jets}

In this section we finish the proof of the main result for the universal hypersurface, and then adapt it to various geometric settings.
\subsubsection{Global generation.}
In order to prove the global generation, in the \(z\)-directions, we use the Corollary \ref{cor:span.z}, the Corollary \ref{cor:Tj} and the simple observation that the vector fields \(\vf[t]{[1]}{}\), \dots,\(\vf[t]{[k]}{}\) are always tangent to the vertical jets ---\,because the variables \(t^{[1]},\dotsc,t^{[k]}\) are not involved in the defining equations\,--- and in the direction of the space of parameters, we use the final observation of section \S\ref{se:construction} above that the vector fields \(\mathsf{T}_{\beta}\) span a vector space of codimension \(k+1\). Because the codimensions agree, this yields indeed that the collection of three families
\[
  \Big\{
    \quad
    \{T_{j,q}\}_{j\geq 2,q=0,\dotsc,k},\ 
    \{\mathsf{T}_{q}\}_{q=0,\dotsc,k},\ 
    \{T_{\beta}\}_{\beta\neq \mathbf{0},\mathbf{1},\dotsc,k\mathbf{1}_{1}}
    \quad
  \Big\},
\]
which contain respectively slanted tangential vector fields, vertical tangential vector fields, and lastly horizontal tangential vector fields,
span the tangent space to vertical jets at points of \(J_{k}^{\mathit{vert}}\) where \(z_{1}'\neq 0\).

\smallskip
Notice that in the argumentation above we have intentionally not mentioned that the vector field \(\vf[a]{\widehat{\alpha}}{}\) does not exist. Actually, there is a good reason for that, since the two conditions \(\abs{\widehat{\alpha}}=d>d-1\) and \(\widehat{\alpha}_{1}=0\) imply that:
\begin{itemize}
  \item the coefficient \(u_{j,\widehat{\alpha}}\) is zero, in other words the non existing vector field \(\vf[a]{\widehat{\alpha}}{}\) is never involved in the construction of \(\mathsf{T}_{j,0}\), for \(q=0\). 
  \item \(\widehat{\alpha}\) is never equal to \(\beta+q\mathbf{1}_{1}\) with \(\abs{\beta}\leq d-1\) and thus it is never involved in the construction of the vector fields \(\mathsf{U}_{q}^{\beta}\), with \(q\geq1\) in Lemma \ref{lem:Uq}, nor in the construction of the vector fields \(\mathsf{U}_{0}^{\beta+q\mathbf{1}_{1}}\) in Lemma \ref{lem:moreUb}.
\end{itemize}

\subsubsection{Pole order of meromorphic prolongations.}
\label{sse:po}
It remains to compute the pole order of the meromorphic prolongations of the corrected vector fields shown above.

Firstly, if \(\abs{\beta}\leq d\), then the pole order of \(\mathsf{U}_{0}^{\beta}\bydef\vf[a]{\beta}{}\) is clearly \(0\) and for \(\abs{\beta}>d\), we have seen that the coefficients of \(\mathsf{U}_{0}^{\beta}\) are polynomials of degree at most \(k+\abs{\beta}-d\) in the variables \(z_{1},\dotsc,z_{n}\),
that can classically be extended as meromorphic function with pole order at most \(k+\abs{\beta}-d\) on \(\P^{n+1}\);
hence by additivity with respect to product and subadditivity with respect to sum, using the definition in Lemma \ref{lem:Uq}, the pole order of \(\mathsf{U}_{q}^{\beta}\) is:
\begin{equation}
  \label{eq:poUq}
  \mathrm{p.o}(\mathsf{U}_{q}^{\beta})
  =
  \begin{cases}
    q
    &\text{if }\abs{\beta}+q\leq d 
    \\
    q+k+\abs{\beta}+q-d
    &\text{if }\abs{\beta}+q>d.
  \end{cases}
\end{equation}

\begin{subequations}
  Then, by considering the successive derivations of \(z_{i}=Z_{i}/Z_{0}\), it is easy to see that the pole order along the hyperplane at infinity \((Z_{0}=0)\)  of the meromorphic prolongation to \(\P^{n+1}\) of \(z_{i}^{(p)}\) is \(p+1\).
  This yields that for \(\mu\in\N^{k}\), the pole order of the meromorphic continuation to \(\P^{n+1}\) of the monomial
  \[
    z_{i}^{(\mu)}
    \bydef
    {z_{i}^{(1)}}^{\mu_{1}}
    \dotsm
    {z_{i}^{(k)}}^{\mu_{k}}
    \qquad
    {\scriptstyle(i=1,\dotsc,n)}
  \] 
  is, by additivity:
  \begin{equation}
    \label{eq:po.1}
    \mathrm{p.o}\big(z_{i}^{(\mu)}\big)
    =
    \sum_{p}(p+1)\mu_{p}
    =
    \Abs{\mu}+\abs{\mu}.
  \end{equation}
  Consequently, by subadditivity, the meromorphic continuation to \(\P^{n+1}\) of the Bell polynomial
  \(
  \mathbi{B}_{p,q}(z_{1})
  =
  \sum_{\Abs{\mu}=q,\abs{\mu}=p}
  \frac{\abs{\mu}!}{\mu!}\,
  z_{1}^{(\mu)}
  \) 
  has pole order:
  \begin{equation}
    \label{eq:po.2}
    \mathrm{p.o}\left(\mathbi{B}_{p,q}\right)
    =
    p+q.
  \end{equation}
\end{subequations}

Using \eqref{eq:po.1} and \eqref{eq:po.2}, it is elementary to compute the pole orders of the constructed vector fields by coming back to their expressions in the standard jet coordinate system.

\begin{subequations}
  The coefficients of \(\vf{j}{[q]}\) are the Bell polynomials \(\mathbi{B}_{q,q}(z_{1}),\dotsc,\mathbi{B}_{q,k}(z_{1})\), whence: 
  \[ \mathrm{p.o.} \big( \vf{j}{[q]} \big) = q+k.\]
  This and \eqref{eq:poUq} yields:
  \begin{equation}
    \label{eq:poTjq}
    \mathrm{p.o}(\mathsf{T}_{j,q})
    =
    \begin{cases}
      k+q&\text{if }q\leq 1\\
      k-1+2q&\text{if }q\geq 2.
    \end{cases}
  \end{equation}

  The coefficients of the vector fields
  \[
    \mathsf{T}_{q}
    \bydef
    \sum_{m=1}^{k}
    \mathbi{B}_{m,\ell}[t]\,
    \vf[t]{}{[m]}
    =
    -
    \sum_{i=1,\dotsc,n}\;
    \sum_{p=1}^{k-\ell+1}
    p\,z_{i}^{(p)}\,\vf{i}{(p+\ell-1)}
  \]
  are the monomials \(z_{i}^{(1)},\dotsc,z_{i}^{(k-q)}\) whence:
  \begin{equation}
    \label{eq:poTsym}
    \mathrm{p.o}(\mathsf{T}_{q})
    =
    2\,(k-q).
  \end{equation}

  It remains to compute the pole order of the meromorphic prolongation of the vector fields \(\mathsf{T}_{\beta}\) and we will shortly show the following.
  \begin{equation}
    \label{eq:poTb}
    \mathrm{p.o}(\mathsf{T}_{\beta})
    =
    \begin{cases}
      k+1&\text{if }k+1\leq\abs{\beta}\leq d\\
      4k+\abs{\beta}-2&\text{if }\abs{\beta}\leq k\\
    \end{cases}
  \end{equation}
\end{subequations}

These observations \eqref{eq:poTjq}\,--\,\eqref{eq:poTb} yield the constants \((5\,k-2)\) in the main theorem and the other constant is \(1\) because everything is clearly linear in \(a_{\alpha}\).

Notice that this pole order is \(1\) more than the pole orders computed in small dimensions by P\u{a}un (\(7\) for \(k=2\)) and Rousseau (\(12\) for \(k=3\)), but this is a fair price to pay for obtaining the global generation on the subset of invertible jets, and not anymore in the complement of the zero locus of a certain determinant depending on the equation of the hypersurface. This is an important detail in order to get results towards the \emph{strong} Green-Griffiths conjecture (\textit{cf.} \cite{MR2593279,arXiv:1402.1396}).

\begin{proof}
  [Proof of \eqref{eq:poTb}]
A quick induction based on the chain rule \(\mathsf{D}_{z_{1}}\bydef\mathsf{D}_{t}/z_{1}'\) shows that 
for \(q\leq k\) and \(\beta\in\N^{n}\), there exist combinatorial coefficients \({\propto}_{\bullet}\) such that:
\[
  \mathsf{D}_{z_{1}}^{q}\cdot z^{\beta}
  =
  \sum_{\substack{
    \abs{\lambda}+\abs{\mu^{1}}+\dotsb+\abs{\mu^{n}}=\abs{\beta}+q-1\\
    \Abs{\mu^{1}}+\dotsb+\Abs{\mu^{n}}=2q-1\\
  }}
  {\propto}_{\lambda,\mu^{1},\dotsc,\mu^{n}}(\beta,q)\;
  z^{\lambda}\;
  \frac{
    \Big(
    z_{1}^{(\mu^{1})}
    \dotsm
    z_{n}^{(\mu^{n})}
  \Big)}
  {{z_{1}'}^{2q-1}}.
\]
So after multiplying by \({z_{1}'}^{2q-1}\) one obtains a polynomial with pole order:
\[
  \mathrm{p.o.}
  \big(
  {z_{1}'}^{2q-1}\,
  \mathsf{D}_{z_{1}}^{q}\cdot z^{\beta}
  \big)
  =
  \abs{\lambda}
  +
  (
  \abs{\mu^{1}}
  +
  \Abs{\mu^{1}}
  )
  +
  \dotsb
  +
  (
  \abs{\mu^{n}}
  +
  \Abs{\mu^{n}}
  )
  =
  \abs{\beta}+3q-2.
\]
Thus, using subadditivity
\begin{align*}
  \mathrm{p.o}\;
  \big(
  (z_{1}')^{2k-1}
  \big(\mathsf{D}_{z_{1}}^{q}\cdot z^{\beta}\big)\,
  \mathsf{U}_{q}^{\mathbf{0}}
  \big)
  &\leq
  \mathrm{p.o}\big((z_{1}')^{2(k-q)}\big)
  +
  \mathrm{p.o.}
  \big(
  {z_{1}'}^{2q-1}\,
  \mathsf{D}_{z_{1}}^{q}\cdot z^{\beta}
  \big)
  +
  \mathrm{p.o.}
  \big(
  \mathsf{U}_{q}^{\mathbf{0}}
  \big)
  \\
  &\leq
  4(k-q)
  +
  \abs{\beta}+3q-2
  +
  q
  =
  4k+\abs{\beta}-2,
\end{align*}
which indeed leads to \(\mathrm{p.o}(\mathsf{T}_{\beta})=4k+\abs{\beta}-2\) for \(\abs{\beta}\leq k\).
\end{proof}

This ends the proof of the main result in the case of the universal hypersurface.

\subsubsection{Global generation for complete intersections.}
For complete intersections, the strategy described in section \S\ref{se:construction} directly applies, since the only common variables between two of the \(c\) systems of algebro-differential equations corresponding to each of the \(c\) defining polynomials are the \(z\)-variables.
Fixing the vertical part \(\mathsf{V}\) (possibly \(=0\)), it is possible to solve the problem for each system separately, then adding all the corrective parts, one obtains a corrective part for all the \(c\) systems together.

\subsubsection{Modifications needed for the logarithmic case.}
In order to treat the logarithmic case in a very similar way as the compact case, 
we first straighten out the universal hypersurface, following the strategy of Rousseau in \cite{MR2552951}. As in the compact case, we start by the case \(c=1\).

In order to straighten out the universal family \(\mathcal{H}_{d}\), given in the system of coordinates \(\bigl([Z],[A]\bigr)\) on \(\P^{n}\times \P^{n_{d}}\) by
\(
\mathcal{H}_{d}
\bydef
\{
  0
  =
  \sum_{\abs{\alpha}=d}
  A_{\alpha}\,Z^{\alpha}
\}
\subset
\P^{n}\times
\P^{n_{d}}
\),
introduce a new homogeneous "\(Z\)-coordinate" \(W\in \C\) associated with a new homogeneous "\(A\)-coordinate" \(A_{0}\in\C\), thought of as the coefficient of the monomial \(W^{d}\). 
Accordingly consider the zero set:
\[
  \mathcal{X}
  \bydef
  \biggl\{
    A_{0}\,
    W^{d}
    =
    \sum_{\abs{\alpha}=d}
    A_{\alpha}\,
    Z^{\alpha}
  \biggr\}
  \subset
  \P^{n+1}\times\P^{n_{d}+1}.
\]

There is a natural \textsl{forgetful map}:
\[
  \pi
  \colon
  \P^{n+1}\times\P^{n_{d}+1}
  \setminus
  \left(\{\forall Z_{i}=0\}\cup\{\forall A_{\alpha}=0\}\right)
  \to
  \P^{n}\times\P^{n_{d}},
\]
that consists in erasing both \(W\) and \(A_{0}\). 
Notice that:
\[
  \mathcal{X}
  \cap
  \left(\{\forall Z_{i}=0\}\cup\{\forall A_{\alpha}=0\}\right)
  \subset
  \mathcal{X}
  \cap
  \left(\{A_{0}=0\}\cup\{\forall A_{\alpha}=0\}\right)
\]
Indeed, if \(Z=0\), the equation of \(\mathcal{X}\) becomes:
\(
A_{0}\,W^{d}
=
0.
\)
This implies that either \(A_{0}\) or \(W\) must be zero. 
But \(W\) cannot be zero, because the homogeneous coordinates of the point \([W:0:\dotso:0]\) in the projective space \(\P^{n+1}\) cannot be all simultaneously zero. Thus \(A_{0}\) must be zero.

Let \(\mathcal{X}^{*}\) be the restriction of \(\mathcal{X}\) to the affine set, pointed at the origin:
\[
  \{A_{0}\neq 0\}\setminus\{[1:0:\dotso:0]\}
  \simeq
  \C^{n+1}\setminus\{0\}.
\]
Then, the projection \(\pi\vert_{\mathcal{X}^{*}}\colon\mathcal{X}^{*}\to\P^{n}\times \P^{n_{d}}\) is well defined and moreover, it is a branched cover of degree \(d\) that ramifies exactly over \(\mathcal{H}_{d}\).
Since \(A_{0}\neq 0\), the inverse image of the universal family \(\mathcal{H}_{d}\) under this projection identifies with the (straight) hyperplane 
\[
  D
  \bydef
  (\pi\vert_{\mathcal{X}^{*}})^{\moinsun}(\mathcal{H}_{d})
  =
  \{W=0\}.
\]

The map \(\pi\colon(\mathcal{X}^{*},D)\to(\P^{n}\times \P^{n_{d}},\mathcal{H}_{d})\) is therefore a log-morphism (\cite{MR637060}),
that induces a canonical holomorphic map on the spaces of jets of logarithmic curves:
\[
  \pi_{[k]}
  \colon
  J_{k}\mathcal{X}^{*}(-\log D)
  \to
  \pi^{\star}J_{k}(\P^{n}\times \P^{n_{d}})(-\log\mathcal{H}),
\]
that is clearly dominant, as \(d\pi_{[k]}\) is of maximal rank.
This projection \(\pi_{[k]}\) also send vertical jets on vertical jets.
We will thus study the vertical logarithmic jets upstairs, where it is easier to use logarithmic jet-coordinates.

\smallskip

As long as the \(w\)-component of the vector field is zero, nothing change compared to \S\ref{se:construction}, except that there is no multi-index \(\widehat{\alpha}\) to remove, and we have all the tangential vector fields of \S\ref{se:construction}.

For the supplementary logarithmic direction \(w\), by the Faà di Bruno formula, one has:
\[
  w^{[q]}
  =
  \exp(\log w)\,
  \sum_{\Abs{\mu}=q}
  \frac{{(\log w)^{[1]}}^{\mu_{1}}}{\mu_{1}!}
  \dotsm
  \frac{{(\log w)^{[k]}}^{\mu_{k}}}{\mu_{k}!}
  \quad
  {\scriptstyle(q=0,1,\dotsc,k)},
\]
which implies the dual relations:
\[
  \vf[(\log w)]{}{[p]}
  =
  \sum_{q=p}^{k}
  w^{[q-p]}\,
  \vf[w]{}{[q]}
  \quad
  {\scriptstyle(q=0,1,\dotsc,k)}.
\]
In particular, notice for later use that for \(P\in\C[w,z_{1},\dotsc,z_{n}]\):
\[
  \vf[\log w]{}{}\cdot P
  =
  w\,
  \vf[w]{}{}\cdot P
  \qquad
  \text{and}
  \qquad
  \vf[(\log w)]{}{[p]}\cdot P
  =
  0
  \quad
  {\scriptstyle(p=1,\dotsc,k)}.
\]
\begin{lemma}
  For \(p=0,1,\dotsc,j\), the vector field \(\mathsf{V}=\vf[(\log w)]{}{[p]}\) does not satisfy \(\Lambda_{z_{1}}(\mathsf{V})=0\), because it produces a nonzero entry in the \((p+1)\)-th line:
  \[
    \Lambda_{z_{1}}\big(
    \vf[(\log w)]{}{[p]}
    \big)
    =
    (-1)^{p}\,p!\;
    d\,w^{d}\;
    \mathbi{e}_{p}.
  \]
\end{lemma}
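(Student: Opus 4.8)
The statement to prove concerns the value $\Lambda_{z_1}\bigl(\partial_{(\log w)^{[p]}}\bigr) = (-1)^p\,p!\;d\,w^d\,\mathbf{e}_p$. The plan is to compute each of the $k+1$ entries $\bigl(\mathsf{D}_{z_1}^{m}\ad \partial_{(\log w)^{[p]}}\bigr)\cdot P$ of this vector-valued form directly, using the general mechanism already set up in the excerpt. First I would recall the explicit expansion of $\partial_{(\log w)^{[p]}}$ in the $w$-jet-coordinates established just above the statement: $\partial_{(\log w)^{[p]}} = \sum_{q=p}^{k} w^{[q-p]}\,\partial_{w^{[q]}}$. Since the defining polynomial $P = A_0 w^d - \sum_{|\alpha|=d} A_\alpha Z^\alpha$ (in the straightened picture; after passing to the affine set and geometric jet coordinates, $P \in \C[w,z_1,\dots,z_n]$, so the higher geometric jet coordinates $w^{[1]},\dots,w^{[k]}$ simply do not occur in $P$), the crucial input is the pair of formulas noted for later use: $\partial_{\log w}\cdot P = w\,\partial_w\cdot P$ and $\partial_{(\log w)^{[p]}}\cdot P = 0$ for $p = 1,\dots,k$. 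From $A_0 w^d = \sum A_\alpha Z^\alpha$ one gets $w\,\partial_w\cdot P = d\,A_0 w^d = d\,w^d$ after normalizing $A_0 = 1$ on the relevant affine chart.

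Next I would run the adjoint-action computation exactly as in Corollary \ref{cor:Lvfjq}, using the transposed-coordinate analog of Lemma \ref{lem:Dtvf}. The key algebraic fact is that $\mathsf{D}_{z_1}$, expressed in the geometric jet coordinate system on $\{z_1'\neq 0\}$, has the simple shape $\partial_{z_1} + \sum_{i\geq 2}\sum_{p}(p+1)z_i^{[p+1]}\partial_{z_i^{[p]}}$ plus, on the $t$- and $w$-blocks, the corresponding Euler-type shift operators; concretely, on the $w$-block it acts so that $\mathsf{D}_{z_1}\ad \partial_{w^{[q]}} = -q\,\partial_{w^{[q-1]}}$, in complete parallel with Lemma \ref{lem:Dtvf}. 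Combining this with the Leibniz rule \eqref{eq:leibniz} and the identity $\mathsf{D}_{z_1}\cdot w^{[q-p]} = (q-p+1)\,w^{[q-p+1]}$ (the defining property of geometric jet coordinates from Lemma \ref{lem:geometric.jets.Dz}), one checks by a short induction that $\mathsf{D}_{z_1}\ad \partial_{(\log w)^{[p]}} = -p\,\partial_{(\log w)^{[p-1]}}$, hence $\mathsf{D}_{z_1}^{m}\ad \partial_{(\log w)^{[p]}} = (-1)^m \frac{p!}{(p-m)!}\,\partial_{(\log w)^{[p-m]}}$, with the convention that this is $0$ for $m>p$. This is the direct transcription of the iterated formula displayed right after Lemma \ref{lem:Dtvf}.

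Finally I would assemble $\Lambda_{z_1}$. Applying $\mathsf{D}_{z_1}^{m}\ad \partial_{(\log w)^{[p]}} = (-1)^m\frac{p!}{(p-m)!}\partial_{(\log w)^{[p-m]}}$ to $P$: when $m < p$ the surviving vector field is $\partial_{(\log w)^{[p-m]}}$ with $p-m \geq 1$, which kills $P$ by the noted vanishing $\partial_{(\log w)^{[r]}}\cdot P = 0$ for $r\geq 1$; when $m > p$ the vector field is $0$; and when $m = p$ we get exactly $(-1)^p\,p!\;\partial_{(\log w)^{[0]}}\cdot P = (-1)^p\,p!\;\partial_{\log w}\cdot P = (-1)^p\,p!\;w\,\partial_w\cdot P = (-1)^p\,p!\;d\,w^d$. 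Thus only the $(p+1)$-th entry survives and $\Lambda_{z_1}(\partial_{(\log w)^{[p]}}) = (-1)^p\,p!\,d\,w^d\,\mathbf{e}_p$, as claimed.

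\emph{Main obstacle.} The only genuinely delicate point is justifying $\mathsf{D}_{z_1}\ad \partial_{w^{[q]}} = -q\,\partial_{w^{[q-1]}}$ in the geometric jet coordinate system; this requires knowing that the geometric-coordinate presentation of $\mathsf{D}_{z_1}$ acts on the $w$-block as the honest shift operator (no cross terms with the $z_i^{[\,\cdot\,]}$ or $t^{[\,\cdot\,]}$ blocks), which follows from Lemma \ref{lem:geometric.jets.Dz} and the independence of the various coordinate blocks but deserves to be spelled out. Everything else is a mechanical repetition of the compact-case argument (Corollary \ref{cor:Lvfjq}) with the $w$-direction replacing the $z_j$-direction and $\partial_{\log w}\cdot P = d\,w^d$ replacing $\partial_j\cdot P$.
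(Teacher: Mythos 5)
Your overall route is the same as the paper's: expand \(\vf[(\log w)]{}{[p]}=\sum_{q=p}^{k}w^{[q-p]}\,\vf[w]{}{[q]}\), compute the iterated adjoint action of \(\mathsf{D}_{z_{1}}\) by the Leibniz rule, and evaluate on \(P\) using \(\vf[\log w]{}{}\cdot P=w\,\vf[w]{}{}\cdot P=d\,w^{d}\) and \(\vf[(\log w)]{}{[r]}\cdot P=0\) for \(r\geq1\). But the identity on which you rest your induction,
\[
\mathsf{D}_{z_{1}}\ad\vf[(\log w)]{}{[p]}
=
-p\,\vf[(\log w)]{}{[p-1]},
\]
is false as an identity of vector fields, and the ``short induction'' you invoke would reveal this. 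The operator \(\mathsf{D}_{z_{1}}\) is truncated at jet order \(k\): its \(w\)-block is \(\sum_{r=0}^{k-1}(r+1)\,w^{[r+1]}\,\vf[w]{}{[r]}\), with no term in \(\vf[w]{}{[k]}\). Hence, when the Leibniz rule is applied to \(\sum_{q=p}^{k}w^{[q-p]}\,\vf[w]{}{[q]}\), the derivative of the top coefficient is not compensated by any index shift, and one finds instead
\[
\mathsf{D}_{z_{1}}\ad\vf[(\log w)]{}{[p]}
=
-p\,\vf[(\log w)]{}{[p-1]}
+
(k+1)\,w^{[k+1-p]}\,\vf[w]{}{[k]}
\qquad
{\scriptstyle(p\geq1)}.
\]
(Check \(k=p=1\): \(\mathsf{D}_{z_{1}}\ad\bigl(w\,\vf[w]{}{[1]}\bigr)=w^{[1]}\vf[w]{}{[1]}-w\,\vf[w]{}{}\), whereas \(-\vf[\log w]{}{}=-w\,\vf[w]{}{}-w^{[1]}\vf[w]{}{[1]}\); the discrepancy is \(2\,w^{[1]}\vf[w]{}{[1]}\).) Under iteration this remainder propagates into a combination of the fields \(\vf[w]{}{[r]}\) with \(r\geq k+1-q\geq 1\); in particular your further claim that \(\mathsf{D}_{z_{1}}^{m}\ad\vf[(\log w)]{}{[p]}=0\) for \(m>p\) is also false at the level of vector fields.

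The lemma nevertheless survives and your final formula is correct, because every one of these parasitic terms lies in the span of \(\{\vf[w]{}{[r]}\}_{r\geq1}\) and therefore annihilates \(P\in\C[w,z_{1},\dotsc,z_{n}]\) --- this is precisely the bookkeeping the paper's proof carries out, recording the remainder explicitly at each stage and observing that it is supported on \(\vf[w]{}{[r]}\) with \(r\geq1\). So the gap is repairable in one line, but as written your induction asserts an identity that fails, and the vanishing of the entries with \(m\neq p\) requires the additional argument that the remainder kills \(P\), not merely that \(\vf[(\log w)]{}{[p-m]}\) does. The point you single out as the ``main obstacle'' (absence of cross terms between the \(w\)-block and the \(z\)- and \(t\)-blocks) is indeed unproblematic; the real subtlety is the intra-block truncation at order \(k\), which your write-up misses.
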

\begin{proof}
  For \(p\geq1\), applying the Leibniz rule, as follows,
  \begin{align*}
    \big(
    \mathsf{D}_{z_{1}}
    \ad
    \vf[(\log w)]{}{[p]}
    \big)
    &=
    \sum_{q=p}^{k}
    \big(\mathsf{D}_{z_{1}}\cdot w^{[q-p]}\big)\,
    \vf[w]{}{[q]}
    +
    \sum_{q=p}^{k}
    w^{[q-p]}\,
    \big(
    \mathsf{D}_{z_{1}}
    \ad
    \vf[w]{}{[q]}
    \big)
    \\
    &=
    \sum_{q=p}^{k}
    (q-p+1)\,w^{[q-p+1]}\,
    \vf[w]{}{[q]}
    +
    \sum_{q=p}^{k}
    w^{[q-p]}\,
    \big(
    -q\,
    \vf[w]{}{[q-1]}
    \big),
  \end{align*}
  and then, shifting the indices in the second sum, and adding zero in the first sum by considering also the term for \(q=p-1\):
  \begin{align*}
    \big(
    \mathsf{D}_{z_{1}}
    \ad
    \vf[(\log w)]{}{[p]}
    \big)
    &=
    \sum_{q=p-1}^{k}
    (q-p+1)\,w^{[q-p+1]}\,
    \vf[w]{}{[q]}
    +
    \sum_{q=p-1}^{k-1}
    w^{[q-p+1]}\,
    \big(
    -(q+1)\,
    \vf[w]{}{[q]}
    \big)
    \\
    &=
    -p
    \sum_{q=p-1}^{k}
    w^{[q-(p-1)]}\,
    \vf[w]{}{[q]}
    +
    (k+1)\,
    w^{[k+1-p]}\,
    \vf[w]{}{[k]},
  \end{align*}
  one obtains that for \(p\geq1\):
  \[
    \big(
    \mathsf{D}_{z_{1}}
    \ad
    \vf[(\log w)]{}{[p]}
    \big)
    =
    -p
    \vf[(\log w)]{}{[p-1]}
    +
    (k-p+1)\,w^{(k-p+1)}\,\vf[w]{}{[k]}.
  \]
  Similarly for \(p=0\) one obtains:
  \[
    \big(
    \mathsf{D}_{z_{1}}
    \ad
    \vf[\log w]{}{}
    \big)
    =
    0.
  \]

  By induction, this yields that, for \(q\leq p\):
  \[
    \big(
    \mathsf{D}_{z_{1}}^{q}
    \ad
    \vf[(\log w)]{}{[p]}
    \big)
    =
    (-1)^{p-q}\,\frac{p!}{(p-q)!}\,
    \vf[(\log w)]{}{[p-q]}
    +
    \sum_{r=k+1-q}^{k}
    {\propto}_{q,p}(w^{(k+1-p)},\dotsb,w^{(k+q-p)})\,
    \vf[w]{}{[q]},
  \]
  for some polynomial coefficients \({\propto}_{q,p}\),
  and for \(q\geq p\)
  \[
    \big(
    \mathsf{D}_{z_{1}}^{q}
    \ad
    \vf[(\log w)]{}{[p]}
    \big)
    =
    \sum_{r=k+1-q}^{k}
    {\propto}_{q,p}(w^{(k+1-p)},\dotsb,w^{(k+q-p)})\,
    \vf[w]{}{[q]},
  \]
  for some polynomial coefficients \({\propto}_{q,p}\).
  Now, recall that for \(P\in\C[w,z_{1},\dotsc,z_{n}]\)
  one has
  \(
  \vf[\log w]{}{}\cdot P
  =
  w\,
  \vf[w]{}{}\cdot P
  \) and \(
  \vf[(\log w)]{}{[p]}\cdot P
  =
  0
  \quad
  {\scriptstyle(p=1,\dotsc,k)}
  \),
  and notice that of course
  \(
  \vf[w]{}{[p]}\cdot P
  =
  0
  \quad
  {\scriptstyle(p=1,\dotsc,k)}
  \),
  in order to conclude, as announced that for \(q\neq p\):
  \[
    \big(
    \mathsf{D}_{z_{1}}^{q}
    \ad
    \vf[(\log w)]{}{[p]}
    \big)
    \cdot
    \big(
    w^{d}-\sum_{\abs{\alpha}\leq d} a_{\alpha}\,z^{\alpha}
    \big)
    =
    0,
  \]
  and for \(q=p\):
  \[
    \big(
    \mathsf{D}_{z_{1}}^{p}
    \ad
    \vf[(\log w)]{}{[p]}
    \big)
    \cdot
    \big(
    w^{d}-\sum_{\abs{\alpha}\leq d} a_{\alpha}\,z^{\alpha}
    \big)
    =
    (-1)^{p}\,p!\;
    \vf[\log w]{}{}\cdot w^{d}
    =
    (-1)^{p}\,p!\;
    d\,w^{d},
  \]
  which concludes the proof.
\end{proof}

\begin{corollary}
  \label{cor:Tw}
  In the logarithmic direction, for \(q=0,1,\dotsc,k\),
  the following corrected vector field is tangent to \(J_{k}^{vert}\):
  \[
    \mathsf{T}_{w,q}
    \bydef
    \vf[w]{}{[q]}
    +
    (-1)^{q}\,q!\,d\;
    \Big(
    \hskip-2pt
    \sum_{\abs{\alpha}\leq d}
    a_{\alpha}\,
    \mathsf{U}_{q}^{\alpha}
    \Big).
  \]
\end{corollary}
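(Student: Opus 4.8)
The plan is to carry out, in the supplementary logarithmic $w$-direction, exactly the correction scheme of Corollary~\ref{cor:Tj}, with the preceding lemma now playing the role that \eqref{eq:D1VF} played there. Everything takes place on $\{z_{1}'\neq0\}$, where the straightened vertical jets are cut out by $P=\mathsf{D}_{z_{1}}\cdot P=\dots=\mathsf{D}_{z_{1}}^{k}\cdot P=0$ with $P=w^{d}-\sum_{\abs{\alpha}\leq d}a_{\alpha}z^{\alpha}$ and $\mathcal{Z}_{p}=\{P=\dots=\mathsf{D}_{z_{1}}^{p}\cdot P=0\}$. First I would record, from the preceding lemma, that the vertical part $\vf[(\log w)]{}{[q]}$ of $\mathsf{T}_{w,q}$ produces the single nonzero line
\[
  \Lambda_{z_{1}}\bigl(\vf[(\log w)]{}{[q]}\bigr)=(-1)^{q}\,q!\,d\;w^{d}\;\mathbi{e}_{q},
\]
all the other $k$ lines of $\Lambda_{z_{1}}$ being identically zero; so only the $\mathbi{e}_{q}$-component has to be cancelled, and a correction drawn from the parameter directions will not disturb the remaining lines.

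Next I would assemble that correction from the building blocks $\mathsf{U}_{q}^{\alpha}$, which are available for every $\alpha$ with $\abs{\alpha}\leq d$ — by Lemma~\ref{lem:Uq} when $\abs{\alpha}+q\leq d$ and by its extension (Lemma~\ref{lem:moreUb}) otherwise — all the more so as in the logarithmic setting no distinguished multi-index $\widehat{\alpha}$ is deleted, so every $\vf[a]{\alpha}{}$ is a genuine vector field. Tested against the logarithmic equation $P=w^{d}-\sum_{\abs{\alpha}\leq d}a_{\alpha}z^{\alpha}$ one has $\Lambda_{z_{1}}(\mathsf{U}_{q}^{\alpha})\equiv -z^{\alpha}\,\mathbi{e}_{q}$, the sign being opposite to the compact case because $\vf[a]{\alpha}{}\cdot P=-z^{\alpha}$; this is precisely why the correction enters $\mathsf{T}_{w,q}$ with a $+$. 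Since $\mathsf{D}_{z_{1}}$ annihilates the parameter coordinates, the Leibniz rule \eqref{eq:leibniz} gives $\mathsf{D}_{z_{1}}^{p}\ad(a_{\alpha}\mathsf{U}_{q}^{\alpha})=a_{\alpha}\,(\mathsf{D}_{z_{1}}^{p}\ad\mathsf{U}_{q}^{\alpha})$, so $\Lambda_{z_{1}}$ is additive in the $a_{\alpha}$ along such sums and $\Lambda_{z_{1}}\bigl(\sum_{\abs{\alpha}\leq d}a_{\alpha}\mathsf{U}_{q}^{\alpha}\bigr)\equiv -\bigl(\sum_{\abs{\alpha}\leq d}a_{\alpha}z^{\alpha}\bigr)\mathbi{e}_{q}$.

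Combining the two, the $w^{d}$ coming from $\vf[(\log w)]{}{[q]}$ and the $\sum_{\abs{\alpha}\leq d}a_{\alpha}z^{\alpha}$ coming from the correction fuse, via the tautology $P=w^{d}-\sum_{\abs{\alpha}\leq d}a_{\alpha}z^{\alpha}$, into
\[
  \Lambda_{z_{1}}(\mathsf{T}_{w,q})=(-1)^{q}\,q!\,d\;P\;\mathbi{e}_{q}.
\]
This is a lower-triangular multiple of $(P,\mathsf{D}_{z_{1}}\cdot P,\dots,\mathsf{D}_{z_{1}}^{k}\cdot P)$ — trivially so, since only the coefficient $H_{q}^{0}$ is nonzero — so Lemma~\ref{lem:VDP-LVP} yields that $\mathsf{T}_{w,q}$ is a section of $\mathcal{T}_{k}$ over $\{z_{1}'\neq0\}$, in particular tangent to $J_{k}^{\mathit{vert}}=\mathcal{Z}_{k}$, which is the assertion of the corollary.

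The step that needs genuine attention, rather than the routine bookkeeping of constants above, is this last one: here one \emph{cannot} take $H=0$. The identity $w^{d}\equiv\sum_{\abs{\alpha}\leq d}a_{\alpha}z^{\alpha}$ used to match the obstruction against the correction is nothing but the equation $P=0$, so the two $\Lambda_{z_{1}}$-values agree only over $\mathcal{Z}_{0}$, not identically on $J_{k}$; this is the point where the logarithmic case departs from the compact one, and it is exactly what the relaxed notion of section of $\mathcal{T}_{k}$ was introduced to absorb. All that then remains to check is that the leftover $(-1)^{q}q!\,d\,P\,\mathbi{e}_{q}$ is a \emph{lower}-triangular combination of the $\mathsf{D}_{z_{1}}^{p}\cdot P$, which is immediate since it lives in the single line $\mathbi{e}_{q}$ and is a multiple of $P$ itself — so there is no serious obstacle, only this conceptual caveat to keep in mind; once it is observed, the vector field $\mathsf{T}_{w,q}$ is the one to be added to the families of \S\ref{se:construction} to span the logarithmic vertical tangent space in the extra $w$-direction.
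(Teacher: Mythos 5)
Your argument is correct and is essentially the paper's own (much terser) proof written out in full: the preceding lemma supplies the single nonzero line \((-1)^{q}q!\,d\,w^{d}\,\mathbi{e}_{q}\), the sign flip \(\vf[a]{\alpha}{}\cdot P=-z^{\alpha}\) for the straightened equation \(P=w^{d}-\sum a_{\alpha}z^{\alpha}\) accounts for the \(+\) in the correction, and the residue \((-1)^{q}q!\,d\,P\,\mathbi{e}_{q}\) is a lower-triangular multiple of \(P\), so Lemma~\ref{lem:VDP-LVP} applies. Your two side remarks — that the vertical part must be read as the logarithmic derivative \(\vf[(\log w)]{}{[q]}\) to be consistent with the preceding lemma, and that here one genuinely needs \(H\neq 0\) (cancellation only over \(\mathcal{Z}_{0}\), not identically on \(J_{k}\)) — are both accurate and match the paper's own caveats.
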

\begin{proof}
  Indeed:
  \[
    w^{d}=\sum_{\abs{\alpha}\leq d}a_{\alpha}\,z^{\alpha}
    \Longrightarrow
    \big(
    \mathsf{D}_{z_{1}}^{q}
    \ad
    \mathsf{T}_{w,p}
    \big)
    \Big(w^{d}-\sum_{\abs{\alpha}\leq d} a_{\alpha}\,z^{\alpha}\Big)
    =
    0.
  \]
  Thus \(\mathsf{T}_{w,q}\) is a section of \(\mathcal{T}_{k}\), and in particular it is tangent to \(J_{k}^{\mathit{vert}}\).
\end{proof}

Notice that here the polynomial is of degree \(d\) and not \(d-1\) as in the compact case, but this is not a problem, 
since in the logarithmic case, we have not removed a multi-index \(\widehat{\alpha}\).

\subsubsection{Modifications needed for the logarithmic case with several components.}
In order to straighten out \(\mathcal{H}_{d_{1},\dotsc,d_{c}}\), introduce \(c\) auxiliary variables \(W_{1}\), \dots, \(W_{c}\) together with the associated parameters \(A_{\mathbf{0}}^{1}\), \dots, \(A_{\mathbf{0}}^{c}\), and consider the complete intersection
\[
  \mathcal{Y}
  \subset
  \{Z_{0}\neq0\}
  \times
  \{A_{\mathbf{0}}^{1}\neq0\}
  \times
  \dotsb
  \times
  \{A_{\mathbf{0}}^{c}\neq0\},
\]
defined by the equations:
\[
  {w_{1}}^{d}=\sum_{\abs{\alpha}\leq d_{1}} a_{\alpha}^{1}\,z^{\alpha}\quad,\qquad
  \dotsc\quad,\qquad
  {w_{c}}^{d}=\sum_{\abs{\alpha}\leq d_{c}} a_{\alpha}^{c}\,z^{\alpha}.
\]
These \(c\) equations have only the \(z\)-variables in common. Since we are able to solve the problem for a fixed ``\(z\)-part'' (possibly equal to zero) with \(c=1\), it is possible to solve the problem for each equation separately. Then adding all the corrective parts we get a corrective part for the system of \(c\) equations, since the ``\(w\)-parts'' and ``\(a\)-parts'' interact with only one of the \(c\) equations, and vanish on the others.

\subsection*{\bfseries Acknowledgments}
I want to thank \textsl{Jérémy Guéré} for suggestions concerning presentation and \textsl{Christophe Mourougane} for interesting discussions on geometric jet coordinates. I warmly thank \textsl{Jean-Pierre Demailly} for friendly explanations in his office, which influenced the definition of the geometric jet coordinates. Lastly, I would like to gratefully thank my thesis advisor \textsl{Joël Merker} for his support, his very careful reading and the proposal of relevant lines of thinking.

\bibliographystyle{amsplain}
\bibliography{$LATEX/these}
\vfill
\end{document}